\newcommand\void[1]{}
\newcommand{\be}{\begin{equation}}
\newcommand{\ee}{\end{equation}}
\newcommand{\bnu}{\begin{enumerate}}
\newcommand{\enu}{\end{enumerate}}
\numberwithin{equation}{section}
\newcommand{\CA}{\mathcal{A}}
\newcommand{\CB}{\mathcal{B}}
\newcommand{\CC}{\mathcal{C}}
\newcommand{\CD}{\mathcal{D}}
\newcommand{\CE}{\mathcal{E}}
\newcommand{\CF}{\mathcal{F}}
\newcommand{\CM}{\mathcal{M}}
\newcommand{\CN}{\mathcal{N}}
\newcommand{\CO}{\mathcal{O}}
\newcommand{\CW}{\mathcal{W}}
\newcommand{\CX}{\mathcal{X}}
\newcommand{\FZ}{\mathfrak{Z}}
\newcommand{\bk}{\mathbf{k}}
\DeclareMathOperator{\Hom}{Hom}
\DeclareMathOperator{\Img}{Im}
\DeclareMathOperator{\Id}{Id}
\DeclareMathOperator{\ev}{ev}
\DeclareMathOperator{\coev}{coev}
\DeclareMathOperator{\chara}{char}
\DeclareMathOperator{\Rep}{Rep}
\DeclareMathOperator{\Fun}{Fun}
\DeclareMathOperator{\Alg}{Alg}
\DeclareMathOperator{\LMod}{LMod}
\DeclareMathOperator{\RMod}{RMod}
\DeclareMathOperator{\BMod}{BMod}
\newcommand{\adj}[4]{\xymatrix{ #1 \ar@<.5ex>[r]^-{#3} & #2 \ar@<.5ex>[l]^-{#4}}}
\newcommand{\Cat}{{\mathcal{C}at}}
\newcommand{\one}{\mathbf{1}}
\newcommand{\rev}{\mathrm{rev}}
\newcommand{\op}{\mathrm{op}}
\newcommand{\mtc}{\mathcal{MT}\mathrm{en}}
\newcommand{\bmtc}{\mathcal{BT}\mathrm{en}}
\newcommand{\mfus}{\mathcal{MF}\mathrm{us}}
\newcommand{\bfus}{\mathcal{BF}\mathrm{us}}
\newcommand{\cl}{\mathrm{cl}}
\newcommand{\ind}{\mathrm{ind}}
\newtheorem{thm}{Theorem}[subsection]
\newtheorem{lem}[thm]{Lemma}
\newtheorem{prop}[thm]{Proposition}
\newtheorem{cor}[thm]{Corollary}
\newtheorem{prop-defn}[thm]{Proposition-Definition}
\theoremstyle{definition}
\newtheorem{defn}[thm]{Definition}
\newtheorem{exam}[thm]{Example}
\newtheorem{rem}[thm]{Remark}
\theoremstyle{remark}
\title{The symmetry enriched center functor is fully faithful}
\author{\small Long Sun}
\address{\rm \footnotesize Department of Mathematics, \\ Peking University, Beijing, 100871, China}
\email{1901110006@pku.edu.cn}
\begin{document}
	
\maketitle
	
	
\begin{abstract}
		
In this work, inspired by some physical intuitions, we define a series of symmetry enriched categories to describe symmetry enriched topological (SET) orders, and define a new tensor product, called the relative tensor product, which describes the stacking of 2+1D SET orders. Then we choose and modify the domain and codomain categories, and manage to make the Drinfeld center a fully faithful symmetric monoidal functor. 
It turns out that this functor, named the symmetry enriched center functor, provides a precise and rather complete mathematical formulation of the boundary-bulk relation of symmetry enriched topological (SET) orders.
We also provide another description of the relative tensor product via a condensable algebra.

\end{abstract}
	
	
\tableofcontents
	
	
\section{Introduction} \label{sec:intro}

	Recently, the study of 2d rational conformal field theories (\cite{FFRS1,FFRS2,DKR1,DKR2}) and that of topological orders in condensed matter physics (\cite{LW,KK,Ko}) have become very popular and crucial in mathematical physics.
	The physical intuitions from these theories have directly inspired many new mathematical results (see for examples \cite{DMNO,DNO,KZ1,LKW1,KWZ2,GVR,VR,KLWZZ,KZ3}).
	
	Among these works, in \cite{KZ1}, it was shown that the Drinfeld center can be made functorial and fully faithful if we choose proper domain and codomain categories. 
	More precisely, one can choose the domain category as the category $\mfus_k^\ind$ of indecomposable multi-fusion categories over an algebraically closed field $k$ of characteristic zero with semisimple bimodules as morphisms and the codomain category as the category $\bfus_k^\cl$ of braided fusion categories with closed multi-fusion bimodules as morphisms. Then, for $\CC,\CD \in \mfus_k^\ind$ and a $\CC$-$\CD$-bimodule $\CM$, it was proved that the assignment
	\begin{equation} \label{eq:center-functor}
	(\CC \xrightarrow{\CM} \CD) \quad \mapsto \quad (\FZ(\CC) \xrightarrow{\FZ^{(1)}(\CM) := \Fun_{\CC|\CD}(\CM,\CM)} \FZ(\CD)),
	\end{equation}
	where $\FZ(\CC)$ denotes the Drinfeld center of $\CC$ and $\Fun_{\CC|\CD} (\CM,\CM)$ denotes the category of $\CC$-$\CD$-bimodule functors, gives a well-defined fully faithful symmetric monoidal functor.

	In fact, this Drinfeld center functor provides a precise and complete mathematical description of the boundary-bulk relation of 2+1D anomaly-free topological orders with gapped boundaries (see \cite{KWZ1,KWZ2}).
	In particular, $\FZ^{(1)}(\CM)$ describes a 1+1D gapped domain wall between the two 2+1D topological phases described by $\FZ(\CC)$ and $\FZ(\CD)$.
	
	In the theory of  topological orders, a topological order with symmetry is called a symmetry enriched topological (SET) order (see for examples \cite{LG,CGLW,BBCW,LKW1}). 
	Inspired by the boundary-bulk relation of 2+1D (anomaly-free) SET orders with gapped boundaries, we give a generalization of the work in \cite{KZ1} to describe the symmetry enriched case, more precisely, to find categorical descriptions of SET orders and give a notion of the symmetry enriched center and make it functorial by choosing proper domain and codomain categories. 
	
	It was illustrated in \cite{LKW1} that the symmetry can be characterized by a symmetric fusion category $\CE$ over a field $k$. These three authors in \cite{LKW1} also showed that one can use a unitary fusion category over $\CE$ to describe the excitations of a 1+1D SET order and a triple $(\CE,\CC,\CM)$ to characterize a 2+1D anomoly-free SET order, where $\CC$ is a unitary modular tensor category over $\CE$ and $\CM$ is a minimal modular extension of $\CC$ (see \cite{Mu1,GVR,VR} for definition). Indeed, $\CE$ describes the symmetry charges, and $\CC$ describes the local topological excitations, and $\CM$ describes the topological excitations after gauging the symmetry (see \cite{LKW1,KLWZZ} for details).
	
	In our work, we soften the terms and study more general cases.
	We first set the domain category as the category $\mtc_{/\CE}$ of multi-tensor categories fully faithful over $\CE$ (see Definition \ref{def:tc_E}) with finite bimodules over $\CE$ (see Definition \ref{def:bim_E}) as morphisms, and set the codomain category as the category $\bmtc_\CE$ of braided tensor categories fully faithful containing $\CE$ (see Definition \ref{def:btccE}) with morphisms given by monoidal bimodules containing $\CE$ (see Definition \ref{def:mbimcE}). 
	Naturally, we choose the tensor product on the monoidal category $\mtc_{/\CE}$ as the Deligne's tensor product over $\CE$, $\boxtimes_\CE$ (see \cite{DSS} for definition), which coincides with the stacking of 1+1D SET orders. 
	Nevertheless, things are becoming more interesting---when we consider the tensor product of braided tensor categories containing $\CE$ which characterizes the stacking of 2+1D SET orders, we find that $\boxtimes_\CE$ does not work! 

	How to properly construct a new tensor product on $\bmtc_\CE$?
	This is one of the main obstacles to making the Drinfeld center functorial and monoidal.

	Inspired by the intuition from physics, we fix our eyes on a structure on those categories, named $\CE$-module braidings (see Definition \ref{def:br}), and then define the relative tensor product over $\CE$ (see Definition \ref{def:rtp}) by involving this structure. In Section \ref{sec:rtensor_localm}, it is proved that our definition of the relative tensor product over $\CE$ coincides with that in \cite{LKW1}, which is used to characterize the stacking of 2+1D SET orders. 
	Then, we show that for $\CC,\CD \in \mtc_{/\CE}$ and an $\CC$-$\CD$-bimodule $\CM$ over $\CE$, the same assignment as the usual case
	$$
	(\CC \xrightarrow{\CM} \CD) \quad \mapsto \quad (\FZ(\CC) \xrightarrow{\FZ^{(1)} (\CM) := \Fun_{\CC|\CD} (\CM,\CM)} \FZ(\CD)),
	$$
	gives a well-defined symmetric monoidal functor. 

	Moreover, when we restrict the domain to the category $\mfus_{/\CE}$ of multi-fusion categories fully faithful over $\CE$ (see Definition \ref{def:tc_E}) with semisimple bimodules over $\CE$ as morphisms, and respectively, for the codomain category, choose the category $\bfus_\CE^\cl$ of nondegenerate braided fusion categories fully faithful containing $\CE$ (see Definition \ref{def:btccE}) with closed bimodules containing $\CE$ as morphisms, we prove that the center functor (or symmetry enriched center functor) is made fully faithful. These necessary conditions of the domain and codomain categories are inspired by the physical intuition of the boundary-bulk relation of SET orders, and the main method we used is to convert to the usual case (without symmetry) and check the additional conditions involved by the symmetry.

	The fully-faithfulness of this symmetry enriched center functor immediately implies two results: (1) two fusion categories over $\CE$ are Morita equivalent over $\CE$ if and only if their Drinfeld centers are braided monoidally equivalent respecting $\CE$ (see Definition \ref{def:bmfE}); (2) there is a group isomorphism between the group of the equivalence classes of semisimple invertible $\CC$-$\CC$-bimodules over $\CE$ and the group of the equivalence classes of braided monoidal auto-equivalences respecting $\CE$ of $\FZ(\CC)$. Both results are the generalizations of the ones for the usual case (without symmetry).

	An outline of this paper is as follows.
	In Section \ref{sec:pre}, we review relevant results in tensor categories over $k$ and set the notations along the way. 
	In Section \ref{sec:SEC}, we introduce a series of symmetry enriched categories, especially including finite monoidal categories over $\CE$ and finite braided monoidal categories containing $\CE$, and define a new tensor product between the latter, named the relative tensor product over $\CE$, via considering a structure named $\CE$-module braidings, and prove some of their properties. These notions play vital roles in our construction of the target category of the symmetry enriched center functor. We devote Section \ref{sec:rtensor_localm} to giving another description of the relative tensor product over $\CE$ and prove that it coincides with the one defined in \cite{LKW1}. 
	In Section \ref{sec:SE-center}, we construct our domain category and target category with symmetry enriched in $\CE$ and equip with proper tensor products, and then finally make the notion of the center functorial and fully faithful. 
	In Section \ref{sec:TO}, we briefly explain the motivation of our work and give the physical meaning of definitions and theorems in Section \ref{sec:SEC} and \ref{sec:SE-center}, especially Theorem \ref{thm:fully-faithful_E}.

~\\
\noindent
\textbf{Acknowledgement.}
	We are grateful for Prof. Hao Zheng's enlightening guidance of this research. And we are also thankful to Prof. Liang Kong, Dr. Zhihao Zhang, Dr. Xiaoxue Wei, Dr. Ansi Bai, Dr. Hao Xu and Dr. Shengyu Yang for many useful discussions and suggestions. LS would like to thank Shenzhen Institute for Quantum Science and Engineering, Southern University of Science and Technology for the invitation. Part of this work was carried out while this author was visiting Shenzhen Institute for Quantum Science and Engineering. The author is supported by NSFC under Grant No. 11871078. \\
	
\noindent
\textbf{Declaration.}	
	The author has no competing interests to declare that are relevant to the content of this article.


\section{Finite monoidal categories} \label{sec:pre}

	In this section, we review some basic notions of finite monoidal categories, collect or prove a few results that are important to this paper, and set our notations along the way.


\subsection{Finite monoidal categories and finite modules}
	
	Let $k$ be a field. We denote by $\bk$ the category of finite dimensional vector spaces over $k$. 
	
	For a monoidal category $\CC$, we use $\CC^\rev$ to denote the same category as $\CC$ but equipped with the reversed tensor product $\otimes^\rev: \CC \times \CC \to \CC$ defined by $a \otimes^\rev b := b \otimes a$.
	
	A \textit{finite category} over $k$ is a $k$-linear category $\CC$ that is equivalent to the category of modules $\RMod_A(\bk)$ for some finite dimensional algebra $A$ over $k$. We omit "over $k$" if it is clear from the context.
	We say that $\CC$ is semisimple if the defining algebra $A$ is semisimple.
	
	There is an intrinsic description of a finite category: a $k$-linear abelian category such that it is locally finite (i.e. all morphism spaces are finite dimensional, every object has finite length) and has finitely many simple objects, each of which has a projective cover. (See \cite[Definition 1.8.6]{EGNO}.)	
	
	Given two finite categories $\CC$ and $\CD$, the \textit{Deligne's tensor product} $\CC \boxtimes \CD$ is a finite category with a bilinear bifunctor which is right exact in both variables and satisfies a universal property (see \cite{De1,Ke} for definition). Note that $\bk \boxtimes \CC \simeq \CC \simeq \CC \boxtimes \bk$. 
		
	A \textit{finite monoidal category} over $k$ is a monoidal category $\CC$ such that $\CC$ is a finite category over $k$ and the tensor product $\otimes: \CC \times \CC \to \CC$ is $k$-bilinear and right exact in each variable. A \textit{finite multi-tensor category}, or \textit{multi-tensor category} for short, is a rigid finite monoidal category. A \textit{tensor category} is a multi-tensor category with a simple tensor unit. A multi-tensor category is \textit{indecomposable} if it is neither zero nor the direct sum of two nonzero multi-tensor categories. A \textit{multi-fusion category} is a semisimple multi-tensor category, and a \textit{fusion category} is a multi-fusion category with a simple tensor unit. Note that any fusion category $\CC$ contains a trivial fusion subcategory consisting of multiples of $\one_\CC$. We directly identify this subcategory with $\bk$.
	
	Let $\CC$ and $\CD$ be finite monoidal categories. A \textit{finite left $\CC$-module} $\CM$ (also denoted as $_\CC\CM$) is a left $\CC$-module such that $\CM$ is a finite category and the $\CC$-action $\odot: \CC \times \CM \to \CM$ is $k$-bilinear and right exact in each variable (see \cite{CFS,O}). We say that $\CM$ is \textit{indecomposable} if it is neither zero nor the direct sum of two nonzero finite left $\CC$-modules. The notions of a \textit{finite right $\CD$-module} (denoted as $\CM_\CD$) and a \textit{finite $\CC$-$\CD$-bimodule} (denoted as $_\CC\CM_\CD$) are defined similarly. Namely, a right $\CD$-module category is the same thing as a left $\CD^\rev$-module category, and a $\CC$-$\CD$-bimodule is exactly a left $\CC \boxtimes \CD^\rev$-module category. In this way, the notions of a left and a right module are the special cases of bimodule categories, e.g. a left $\CC$-module category can be seen as a $\CC$-$\bk$-bimodule.
	
	Let $\CC$,$\CD$ be finite monoidal categories and $_\CC\CM_\CD,_\CC\CN_\CD$ be finite $\CC$-$\CD$-bimodules. A \textit{$\CC$-$\CD$-bimodule functor} $F: \CM \to \CN$ is a $k$-linear functor equipped with two isomorphisms $S^{F|L}_{c,x}: c \odot F(x) \to F(c \odot x)$ and $S^{F|R}_{x,d}: F(x) \odot d \to F(x \odot d)$ satisfying some natural axioms (see \cite{O}).
	We use the notations $S^{F|L}$ and $S^{F|R}$ (or $S^F$ for short if there is only one module structure on some side) to denote the left and right module isomorphisms of a module functor $F$ throughout this paper.
	We use $\Fun_{\CC|\CD}(\CM,\CN)$ to denote the category of right exact $\CC$-$\CD$-bimodule functors from $\CM$ to $\CN$. If $\CC = \bk$ (resp. $\CD = \bk$ or $\CC = \CD = \bk$), we abbreviate $\Fun_{\CC|\CD}(\CM,\CN)$ to $\Fun_{\CD^\rev}(\CM,\CN)$ (resp. $\Fun_\CC(\CM,\CN)$ or $\Fun(\CM,\CN)$).	
	
	For an object $a \in \CC$, we use $a^L$ or $a^R$ to denote the left dual or the right dual of $a$ respectively if exists. 
	When $\CC$ is rigid, taking the left dual determines a monoidal equivalence 
	$$\delta^{L} : \CC^\rev \to \CC^\op, x \mapsto x^L,$$ 
	and taking the right dual determines a monoidal equivalence 
	$$\delta^{R} : \CC^\rev \to \CC^\op, x \mapsto x^R.$$ 
		
	Given a left $\CC$-module $\CM$, the opposite category $\CM^\op$ admits two natural right $\CC$-module structures induced by $\delta^{L}$ and $\delta^{R}$, which are denoted by $\CM^{\op|L}$ and $\CM^{\op|R}$, respectively. More precisely, for $x \in \CM, a \in \CC$, we have
	$$
	\begin{aligned}
	(\CM^{\op|L},\odot^L)&: &x \odot^L a := a^L \odot x , \\
	(\CM^{\op|R},\odot^R)&: &x \odot^R a := a^R \odot x .
	\end{aligned}
	$$
	In general, $\CM^{\op|L}$ and $\CM^{\op|R}$ are not equivalent as right $\CC$-modules (see \cite[Remark 7.1.5]{EGNO} or \cite[Section 2.1]{KZ1} for details).

	An algebra in a monoidal category $\CC$ is an object $A \in \CC$ equipped with two morphisms $u_A : \one_\CC \to A$ and $m_A : A \otimes A \to A$ in $\CC$ satisfying the unity and associativity properties (see \cite[Section 3.1]{O}).

	We have the following important proposition, which gives a reconstruction of a finite module category:
\begin{prop} \label{prop:rec-mod}
	Let $\CC$ be a multi-tensor category, and $_\CC\CM$ a finite left $\CC$-module. Then, we have $\CM \simeq \RMod_A(\CC)$ for some algebra $A \in \Alg(\CC)$.
\end{prop}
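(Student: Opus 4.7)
The plan is to realize $\CM$ as a category of right modules over an internal endomorphism algebra. Since $\CM$ is a finite $k$-linear abelian category, it admits a projective generator $P$: one may take the direct sum of the projective covers of all (finitely many) simple objects of $\CM$. Because $\CC$ is a multi-tensor category, hence rigid, for each $X \in \CM$ the $k$-linear right-exact functor $\CC \to \CM$, $c \mapsto c \odot X$, admits a right adjoint, the internal Hom $[X,-]: \CM \to \CC$, characterized by a natural isomorphism
$$\Hom_\CM(c \odot X, Y) \cong \Hom_\CC(c, [X,Y]).$$
Existence can be argued using rigidity: the identification $\Hom_\CM(c \odot X, Y) \cong \Hom_\CM(X, c^L \odot Y)$ reduces representability in $c$ to a standard finite-category argument, or one notes that $- \odot X$ is a right-exact functor between finite categories so its right adjoint exists on formal grounds once $\CC$ is rigid.

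Setting $A := [P,P] \in \CC$, the composition structure on internal Homs equips $A$ with a canonical algebra structure: the multiplication $m_A: A \otimes A \to A$ is obtained by adjunction from the composite $A \otimes A \odot P \to A \odot P \to P$ built from the counit $\ev_P: A \odot P \to P$ applied twice, and the unit $u_A: \one_\CC \to A$ corresponds to $\Id_P: \one_\CC \odot P \to P$ under the adjunction. Similarly, for any $Y \in \CM$, the object $[P,Y] \in \CC$ carries a canonical right $A$-action $[P,Y] \otimes A \to [P,Y]$ by composition. This yields a $k$-linear functor
$$F: \CM \to \RMod_A(\CC), \quad Y \mapsto [P,Y].$$
Fully faithfulness of $F$ follows from the fact that $P$ is a projective generator, together with the adjunction: the map $\Hom_\CM(X,Y) \to \Hom_{\RMod_A(\CC)}([P,X],[P,Y])$ is an isomorphism because both sides may be identified with suitable subspaces of $\Hom_\CC([P,X],[P,Y])$, and $\Hom_\CM(P,-)$ is faithful and reflects isomorphisms.

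The main obstacle is essential surjectivity. Given $M \in \RMod_A(\CC)$, one has to exhibit $Y \in \CM$ with $F(Y) \simeq M$ as right $A$-modules. The natural candidate is the relative tensor product
$$M \otimes_A P \ := \ \mathrm{coeq}\bigl((M \otimes A) \odot P \rightrightarrows M \odot P\bigr)$$
in $\CM$, where the two parallel arrows come from the $A$-action on $M$ and on $P$, respectively; this coequalizer exists because $\CM$ is abelian and $\odot$ is right-exact in each variable. Setting $Y := M \otimes_A P$, the verification $F(Y) \simeq M$ reduces to the identity $[P, - \odot P] \cong - \otimes A$ as functors $\CC \to \CC$ (a direct consequence of the adjunction defining $A$), combined with the projectivity of $P$ so that $[P,-]$ preserves the defining coequalizer. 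Checking that these identifications intertwine the $A$-actions is the technical heart of the argument, but is essentially forced by naturality of the adjunction.
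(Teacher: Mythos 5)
The paper offers no proof of this proposition, deferring to \cite[Theorem 3.1]{O} and \cite[Proposition 2.3.9]{KZ1}; your argument is precisely the standard one from those sources (and from \cite[Theorem 7.10.1]{EGNO}): pick a projective generator $P$, set $A=[P,P]$, and realize the equivalence by $Y\mapsto [P,Y]$ with quasi-inverse $M\mapsto M\otimes_A P$. The architecture is right, and your essential-surjectivity computation --- the natural isomorphism $[P,-\odot P]\cong -\otimes A$ together with preservation of the bar coequalizer --- is exactly the key step.

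Two points are under-justified as written. First, fully faithfulness: identifying both Hom spaces ``with suitable subspaces of $\Hom_\CC([P,X],[P,Y])$'' yields at best faithfulness, via $\Hom_\CC(\one,[P,-])\cong\Hom_\CM(P,-)$ and the fact that $P$ is a generator; it does not give fullness. The clean route is to note that $-\otimes_A P$ is left adjoint to $[P,-]$ and to prove the counit $[P,X]\otimes_A P\to X$ is an isomorphism, first for $X=c\odot P$ and then for general $X$ using a presentation $P^{\oplus m}\to P^{\oplus n}\to X\to 0$; fully faithfulness then follows formally from the adjunction, by the same mechanism you already deploy for essential surjectivity. Second, the exactness of $[P,-]$ needed to push it through your coequalizer is equivalent to the projectivity of $c\odot P$ for \emph{every} $c\in\CC$, not merely of $P$ itself; this is where rigidity of $\CC$ enters, since $\Hom_\CM(c\odot P,-)\cong\Hom_\CM(P,c^R\odot-)$ and $c^R\odot-$ is exact. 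Both gaps are routine to close and do not affect the strategy.
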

	For the proof of this proposition, we refer readers to \cite[Theorem 3.1]{O} or \cite[Proposition 2.3.9]{KZ1}.


\subsection{Tensor product of module categories}	

	In this work, we focus on the tensor product of finite modules over a multi-tensor category. Similar notions for different cases can be seen in different texts, such as \cite{ENO1,EGNO,DSS,BBJ,Lu,KZ1,Gr}.
	
	Let $\CC$ be a multi-tensor category, $\CM_\CC$ a finite right $\CC$-module and $_\CC\CN$ a finite left $\CC$-module. A \textit{balanced $\CC$-module functor} is a right exact in each variable $k$-bilinear functor $F: \CM \times \CN \to \CD$ equipped with a natural isomorphism $F \circ (\odot \times \Id_\CN) \simeq F \circ (\Id_\CM \times \odot) : \CM \times \CC \times \CN \to \CD$ satisfying two evident relations. We use $\Fun_\CC^{bal}(\CM,\CN;\CD)$ to denote the category of balanced $\CC$-module functors $F: \CM \times \CN \to \CD$.
	
	The \textit{Deligne's tensor product} of $\CM_\CC$ and $_\CC\CN$ \textit{over $\CC$} is a $k$-linear abelian category $\CM \boxtimes_\CC \CN$, together with a balanced $\CC$-module functor $\boxtimes_\CC: \CM \times \CN \to \CM \boxtimes_\CC \CN$, such that, for every $k$-linear abelian category $\CD$, the composition with $\boxtimes_\CC$ induces an equivalence $\Fun(\CM \boxtimes_\CC \CN,\CD) \to \Fun^{bal}_\CC(\CM,\CN;\CD)$. In particular, in the special case $\CC=\bk$, the tensor product $\CM \boxtimes_\bk \CN$ is nothing but the ordinary Deligne's tensor product $\CM \boxtimes \CN$.
	
	We have the following important proposition to reconstruct $\CM \boxtimes_\CC \CN$:
\begin{prop} \label{prop:rec-tp}
	Let $\CC$ be a multi-tensor category. Let $\CM = \LMod_M(\CC)$, $\CM' = \RMod_M(\CC)$ and $\CN = \RMod_N(\CC)$ for some algebras $M,N$ in $\CC$. We have the following assertions:
	\begin{enumerate}
	\item The balanced $\CC$-module functor $\CM \times \CN \to \BMod_{M|N}(\CC)$ defined by $(x,y) \mapsto x \otimes y$ exhibits $\BMod_{M|N}(\CC)$ as the tensor product $\CM \boxtimes_\CC \CN$.
	\item The balanced $\CC$-module functor $\CM \times \CN \to \Fun_\CC(\CM',\CN)$ defined by $(x,y) \mapsto - \otimes_M x \otimes y$ exhibits $\Fun_\CC(\CM',\CN)$ as the tensor product $\CM \boxtimes_\CC \CN$.
	\end{enumerate}
\end{prop}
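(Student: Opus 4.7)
The strategy is to reduce both parts to a common Eilenberg--Watts--type equivalence between $\BMod_{M|N}(\CC)$ and $\Fun_\CC(\RMod_M(\CC),\RMod_N(\CC))$, and then to verify that the two concrete balanced functors appearing in (1) and (2) correspond under this equivalence.

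First I would establish the Eilenberg--Watts equivalence
\[
\Phi : \BMod_{M|N}(\CC) \xrightarrow{\sim} \Fun_\CC(\RMod_M(\CC),\RMod_N(\CC)), \qquad B \mapsto -\otimes_M B,
\]
with quasi-inverse $F \mapsto F(M)$. The right $N$-action on $F(M)$ is visible because $F$ takes values in $\RMod_N(\CC)$, while the left $M$-action arises from the canonical identification of $M$ with $\End_{\RMod_M(\CC)}(M)^\op$ together with the $\CC$-linear coherence data of $F$. Standard bar-resolution bookkeeping shows that the natural map $-\otimes_M F(M) \to F(-)$ is an isomorphism, and dually that $B \to \Phi(B)(M)$ is an isomorphism.

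Next, for part (1), I would check that the prescription $(x,y) \mapsto x \otimes y$ does land in $\BMod_{M|N}(\CC)$: the left $M$-action comes from $x \in \LMod_M(\CC)$, the right $N$-action from $y \in \RMod_N(\CC)$, and the two commute by the associativity of $\otimes$ in $\CC$; the balancing isomorphism $(x \odot c) \otimes y \simeq x \otimes (c \odot y)$ is the associator. To see that this exhibits $\BMod_{M|N}(\CC)$ as $\CM \boxtimes_\CC \CN$, I would verify the universal property: every right exact balanced $F : \CM \times \CN \to \CD$ extends uniquely to a right exact $\tilde F : \BMod_{M|N}(\CC) \to \CD$. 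Using Proposition \ref{prop:rec-mod} and the observation that every $B \in \BMod_{M|N}(\CC)$ is the coequalizer of a canonical pair of maps between free bimodules of the form $M \otimes X \otimes N$, I would define $\tilde F(B)$ as the coequalizer of the corresponding pair of $F$-images; the balancing datum of $F$ is exactly what is needed to make this coequalizer well-defined and functorial in $B$.

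Finally, part (2) follows from (1) by applying $\Phi$: under $\Phi$ the balanced functor $(x,y) \mapsto x \otimes y$ of (1) is sent to $(x,y) \mapsto - \otimes_M x \otimes y$, which is precisely the functor appearing in (2); since $\Phi$ is an equivalence, the universal property transports. The main obstacle I anticipate is the rigorous verification of the universal property in (1) --- specifically, checking that the extension $\tilde F$ respects the balancing on all of $\BMod_{M|N}(\CC)$ rather than merely on the free subcategory, which requires compatibility of the balancing of $F$ with the bar-type presentation of a general bimodule. Once this compatibility is in place, the remaining checks (functoriality, uniqueness, right exactness of $\tilde F$) are routine coequalizer calculus.
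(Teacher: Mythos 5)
The paper itself gives no proof of this proposition --- it defers entirely to \cite{KZ1} --- and the argument given there is essentially the one you outline: the Eilenberg--Watts identification of $\BMod_{M|N}(\CC)$ with $\Fun_\CC(\CM',\CN)$ via $B\mapsto -\otimes_M B$, verification of the universal property in (1) by presenting an arbitrary bimodule as a coequalizer of free bimodules $M\otimes X\otimes N$ and using the balancing to transport the two structure maps, and then deducing (2) by applying the equivalence. Your sketch is correct and matches that standard route, including correctly identifying the one genuinely delicate point (compatibility of the balancing coherences with the bar presentation).
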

	For the proof of this proposition, we refer readers to \cite{KZ1}. Similar results for different cases appeared earlier in various texts, i.e. \cite{ENO3,DSS,BBJ,DN1,Lu}.
	
	We remind readers that $\CM \boxtimes_\CC \CN$ is also finite, for $\BMod_{M|N}(\CC)$ is finite. 

\begin{cor} \label{cor:obj-tp-quo}
	Every object $x$ in $\CM \boxtimes_\CC \CN$ is a quotient of some object with the form $m \boxtimes_\CC n$, where $m \in \CM, n \in \CN$. More precisely, $x$ is isomorphic to a coequalizer of $m' \boxtimes_\CC n' \rightrightarrows m \boxtimes_\CC n$ for some $m,m' \in \CM$ and $n,n' \in \CN$.
\end{cor}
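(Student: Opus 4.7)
The plan is to pass to the bimodule description of $\CM \boxtimes_\CC \CN$ provided by Proposition \ref{prop:rec-tp}. By Proposition \ref{prop:rec-mod}, we may write $\CM \simeq \LMod_M(\CC)$ and $\CN \simeq \RMod_N(\CC)$ for some algebras $M, N \in \Alg(\CC)$, and Proposition \ref{prop:rec-tp}(1) then identifies $\CM \boxtimes_\CC \CN$ with $\BMod_{M|N}(\CC)$ via the balanced functor $(p,q) \mapsto p \otimes q$. Under this equivalence, the object $x$ corresponds to some $M$-$N$-bimodule $X$ in $\CC$, and it suffices to construct the desired coequalizer presentation in $\BMod_{M|N}(\CC)$.

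The natural candidate is the beginning of the two-sided bar resolution of $X$. Set $m := M \otimes X \in \LMod_M(\CC)$ (left $M$-action by multiplication on the first tensor factor) and $n := N \in \RMod_N(\CC)$; then $m \boxtimes_\CC n$ corresponds under the equivalence to the free bimodule $M \otimes X \otimes N$. Similarly, set $m' := M \otimes M \otimes X$ and $n' := N \otimes N$, with left and right module structures on the outermost tensor factors; then $m' \boxtimes_\CC n'$ corresponds to $M \otimes M \otimes X \otimes N \otimes N$. I would then introduce the parallel pair of bimodule morphisms
$$
\phi_1 := m_M \otimes \Id_X \otimes m_N, \qquad \phi_2 := \Id_M \otimes a_\ell \otimes a_r \otimes \Id_N,
$$
both from $M \otimes M \otimes X \otimes N \otimes N$ to $M \otimes X \otimes N$, where $a_\ell \colon M \otimes X \to X$ and $a_r \colon X \otimes N \to X$ are the actions of $M$ and $N$ on $X$, together with the full action morphism $a \colon M \otimes X \otimes N \to X$ as the candidate coequalizing morphism.

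The main content to verify is that $a$ really exhibits $X$ as the coequalizer of $\phi_1, \phi_2$ in $\BMod_{M|N}(\CC)$. The identity $a \circ \phi_1 = a \circ \phi_2$ follows immediately from associativity of the multiplications on $M, N$ and of the bimodule actions on $X$. For the universal property, given any coequalizing bimodule morphism $f \colon M \otimes X \otimes N \to Y$, the candidate factorization is $g := f \circ (u_M \otimes \Id_X \otimes u_N) \colon X \to Y$; both its bimodule-linearity and the identity $g \circ a = f$ follow from the hypothesis $f \circ \phi_1 = f \circ \phi_2$ after inserting units, and uniqueness is forced by the unit axioms alone. This is nothing but the standard fact that a module over an algebra is the reflexive coequalizer of its free resolution, so the main obstacle is simply the careful bookkeeping of module structures across the equivalence of Proposition \ref{prop:rec-tp}; no new ingredient is needed. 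The quotient claim in the first sentence then follows immediately, since any coequalizer is an epimorphism from its codomain.
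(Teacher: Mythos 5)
Your argument is correct and follows essentially the same route as the paper: identify $\CM\boxtimes_\CC\CN$ with $\BMod_{M|N}(\CC)$ via Propositions \ref{prop:rec-mod} and \ref{prop:rec-tp} and exhibit $X$ as a split bar-type coequalizer of free objects. The paper gets away with the one-sided version: since $X$ with only its left $M$-module structure is already an object of $\CM=\LMod_M(\CC)$, one may take $m=X$, $n=N$ and present $X\simeq X\otimes_N N$ as the coequalizer of $X\otimes N\otimes N\rightrightarrows X\otimes N$, which avoids all the two-sided bookkeeping you carry out (and which you do carry out correctly). One small notational point: your $\phi_2$ should be written $\Id_M\otimes a\otimes\Id_N$ with $a\colon M\otimes X\otimes N\to X$ the full inner action (or as the composite of $a_\ell$ and $a_r$), since $a_\ell$ and $a_r$ share the factor $X$ and cannot be tensored side by side.
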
	
	
\begin{proof}
	Using Proposition \ref{prop:rec-mod}, identify $\CM$ with $\LMod_M(\CC)$ and $\CN$ with $\RMod_N(\CC)$ for some algebras $M,N$ in $\CC$. Then, via Theorem \ref{prop:rec-tp} identify $\CM \boxtimes_\CC \CN$ with $\BMod_{M|N}(\CC)$. Note that any object $x \in \BMod_{M|N}(\CC)$ can be seen as a left $M$-module by forgetting its right-module structure. Thus, we have $x \simeq x \otimes_N N$ is the coequalizer of $x \otimes N \otimes N \rightrightarrows x \otimes N$.
\end{proof}	
	
	The following remark gives a description of $\CM \boxtimes_\CC \CN$, which was first referred to in \cite{ENO3}:
\begin{rem} \label{rem:tp-m}
	By the assumption, the category $\CM \boxtimes \CN$ has a natural structure of a right $\CC \boxtimes \CC^\rev$-module. Let $R:\CC \to \CC \boxtimes \CC^\rev$ be the right adjoint functor of the tensor product functor $\otimes:\CC \boxtimes \CC^\rev \to \CC$. Set $L_\CC:=R(\one_\CC)$, which is exactly the canonical algebra in $\CC \boxtimes \CC^\rev$ and has a decomposition as $L_\CC=\oplus_{i \in \CO(\CC)}i^L \boxtimes i$, the summation taken over simple objects of $\CC$ (see \cite{EGNO} for details). Then, $\CM \boxtimes_\CC \CN$ is equivalent to $\RMod_{L_\CC}(\CM \boxtimes \CN)$, the category of	right $L_\CC$-modules in $\CM \boxtimes \CN$. 
	
	More precisely, the equivalence $\tilde{F}:\RMod_{L_\CC}(\CM \boxtimes \CN) \to \CM \boxtimes_\CC \CN$ can be given by the following commutative diagram:
	\begin{equation} \label{eq:tp-m}
	\xymatrix@C=48pt{
		\CM \boxtimes \CN \ar[r]^-{- \otimes L_\CC} \ar[d]|-{F:m \boxtimes n \mapsto m \boxtimes_\CC n} & \RMod_{L_\CC}(\CM \boxtimes \CN) \ar[ld]^-{\tilde{F}} \\
		\CM \boxtimes_\CC \CN. \\		
	}
	\end{equation} 
	
\end{rem}	
	
	Let $\CC$ and $\CD$ be two finite monoidal categories and $\CM$ a finite $\CC$-$\CD$-bimodule. We say that $\CM$ is \textit{invertible} if there is a finite $\CD$-$\CC$-bimodule $\CN$ such that there are equivalences $\CM \boxtimes_\CD \CN \simeq \CC$ as $\CC$-$\CC$-bimodules and $\CN \boxtimes_\CC \CM \simeq \CD$ as $\CD$-$\CD$-bimodules. If such an invertible bimodule exists, $\CC$ and $\CD$ are said to be \textit{Morita equivalent} (see \cite{Mu2,ENO2}).


\subsection{Finite braided monoidal categories and monoidal modules}

	A \textit{finite braided monoidal category} over $k$ is a finite monoidal category $\CC$ endowed with a natural isomorphism for $x,y \in \CC$, $c_{x,y}: x \otimes y \to y \otimes x$ (named by \textit{braiding}), satisfying two relations (see \cite{JS2}). We say $\CC$ is \textit{symmetric} if $c_{x,y}=c_{y,x}^{-1}$.
	We use $\bar{\CC}$ to denote the same monoidal category as $\CC$ but endowed with the inverse braiding $\bar{c}_{x,y}=c_{y,x}^{-1}$. Note that given a symmetric monoidal category $\CE$, we have $\bar{\CE} \simeq \CE \simeq \CE^\rev$.

	Given a monoidal category $\CC$, recall that the \textit{Drinfeld center} of $\CC$, denoted by $\FZ_1(\CC)$ or $\FZ(\CC)$, is the category of pairs $(z,\gamma_{z,-})$, where $z \in \CC$ and $\gamma_{z,-}: z \otimes - \to - \otimes z$ is a half-braiding satisfying several relations (see \cite{JS1,Maj} for examples). Morphisms in $\FZ(\CC)$ are morphisms in $\CC$ between the first components respecting the half-braidings. 
	Note that the category $\FZ(\CC)$ has a natural structure of a braided monoidal category with the braiding defined by its half-braidings. Moreover, $\FZ(\CC)$ can be identified with the category of $\CC$-$\CC$-bimodule functors from $\CC$ to $\CC$. 
	In what follows, we use $\pi_\CC$ to denote the forgetful functor from $\FZ(\CC)$ to $\CC$.
	
	Let $\CC$ be a braided monoidal category with the braiding $c$. 
	We say two objects $x,y \in \CC$ \textit{centralize} each other if $c_{y,x} \circ c_{x,y} = \Id_{x \otimes y}$. For a braided monoidal full subcategory $\CD \subset \CC$, the \textit{centralizer} of $\CD$ in $\CC$, denoted by $\CD'|_\CC$, is defined to be the full subcategory of $\CC$ formed by those objects that centralize every object of $\CD$. It is easy to see $\CD'|_\CC$ is a braided monoidal category. In particular, $\CC'|_\CC$ is clearly a symmetric monoidal category, called the \textit{M\"uger center} of $\CC$ (see \cite{Mu3}). In this paper, we abbreviate $\CC'|_\CC$ by $\CC'$, and also denote it by $\FZ_2(\CC)$ (versus the notation $\FZ_1$ for the Drinfeld center). 
	
	In fact, we have a 2-category $\Cat_\bk$ of finite categories over $k$, right exact $k$-linear module functors and natural transformations. Furthermore, $\Cat_\bk$ equipped with the Deligne's tensor product and $\bk$ as the tensor unit form a symmetric monoidal 2-category. Then in the language of $E_n$-algebra (see \cite{Lu}), in $\Cat_\bk$, for a finite category $\CC$, $\Fun(\CC,\CC)$ is its $E_0$-center; for a finite monoidal category, the Drinfeld center is its $E_1$-center; and for a finite braided monoidal category, the M\"uger center is its $E_2$-center.
	
	Let $\CC$ and $\CD$ be finite braided monoidal categories. A \textit{monoidal $\CC$-$\CD$-bimodule} is a finite monoidal category $\CM$ equipped with a right exact $k$-linear braided monoidal functor $\psi_\CM: \bar{\CC} \boxtimes \CD \to \FZ(\CM)$ (see \cite{KZ1}). And naturally, a \textit{monoidal left $\CC$-module} is a monoidal $\CC$-$\bk$-bimodule, and a \textit{monoidal right $\CD$-module} is a monoidal $\bk$-$\CD$-bimodule. 
	
	We say a monoidal $\CC$-$\CD$-bimodule $\CM$ is \textit{closed} if $\psi_\CM$ is an equivalence.
	
	We say that two monoidal $\CC$-$\CD$-bimodules $\CM,\CN$ are \textit{equivalent} if there is a $k$-linear monoidal equivalence $F:\CM\to\CN$ such that the composite braided monoidal functor $\FZ(F) \circ \psi_\CM: \bar\CC\boxtimes\CD \xrightarrow{\psi_\CM} \FZ(\CM)\xrightarrow[\simeq]{\FZ(F)} \FZ(\CN)$ is isomorphic to $\psi_\CN$. 
	
\begin{exam} \label{exam:fun_CD}
	Let $\CC,\CD$ be multi-tensor categories and $_\CC\CM_\CD$ a finite $\CC$-$\CD$-bimodule. Then $\CW:=\Fun_{\CC|\CD}(\CM,\CM)$ is a monoidal $\FZ(\CC)$-$\FZ(\CD)$-bimodule. Without	loss of generality, we may assume $\CD=\bk$. The monoidal functor $\psi_\CW : \overline{\FZ(\CC)} \to \Fun_\CC(\CM,\CM)$ is given by $c \mapsto c \odot -$, where $\psi_\CW(c) = c \odot -$ is equipped with a natural isomorphism for $c' \in \CC$:
	$$S^{\psi_\CW(c)}_{c',-}:c' \odot \psi_\CW(c)(-) = c' \odot c \odot - \xrightarrow{\gamma_{c,c'}^{-1} \odot \Id_-} c \odot c' \odot - = \psi_\CW(c)(c' \odot -),$$
	which makes $\psi_\CW(c)$ a left $\CC$-module functor. Moreover, $\psi_\CW(c)$ is equipped with a natural half-braiding 
	$$\gamma_{\psi_\CW(c),F}: \psi_\CW(c) \circ F = c \odot F(-) \xrightarrow{S^F_{c,-}} F(c \odot -) = F \circ \psi_\CW(c),$$
	thus defines an object in $\FZ(\Fun_\CC(\CM,\CM))$. Also note that $\psi_\CW$ preserves the half-braiding, thus is promoted to a braided monoidal functor.
	
	In the special case $\CM = \CC$, the above construction recovers the canonical braided monoidal equivalence $\FZ(\CC) \simeq \FZ(\CC^\rev)$. 
\end{exam}
	
	We say that a braided fusion category $\CC$ is \textit{nondegenerate}, if the evident braided monoidal functor $\bar{\CC} \boxtimes \CC \to \FZ(\CC)$ is an equivalence, i.e. the monoidal bimodule $_\CC \CC_\CC$ is closed.
	There are several equivalent conditions of the nondegeneracy of a braided fusion category in \cite{DGNO}. One is that, for a braided fusion category $\CC$, its M\"uger center is trivial, i.e. $\CC' = \bk$.
	
	Let $\CB$ be a braided multi-tensor category, and let $\CC$ be a monoidal right $\CB$-module, $\CD$ a monoidal left $\CB$-module. Then the Deligne's tensor product $\CC \boxtimes_\CB \CD$ over $\CB$ has a canonical structure of a
	finite monoidal category with tensor unit $\one_\CC \boxtimes_\CB \one_\CD$ and tensor product $(x \boxtimes_\CB y) \otimes (x' \boxtimes_\CB y') = (x \otimes x') \boxtimes_\CB (y \otimes y')$ (see \cite{Gr}).


\subsection{Functoriality of Drinfeld center}
	
	In this subsection, we assume that $k$ is an algebraically closed field.
	
	We recall two symmetric monoidal categories $\mtc_k^\ind$ and $\bmtc_k$ introduced in \cite{KZ1}:
	\begin{enumerate}
	\item The category $\mtc_k^\ind$ of indecomposable multi-tensor categories over $k$ with morphisms given by the equivalence classes of finite bimodules.
	\item The category $\bmtc_k$ of braided tensor categories over $k$ with morphisms given by the equivalence classes of monoidal bimodules. 
	\end{enumerate}
	Both categories are equipped with the Deligne's tensor product as the tensor product and $\bk$ as the tensor unit.
	
	Then, we can make Drinfeld center a functor as follows (see \cite{KZ1}):
	\begin{thm} \label{thm:functorial}
		The assignment $$\CC \mapsto \FZ(\CC), \ {}_\CC\CM_\CD \mapsto \FZ^{(1)}(\CM) := \Fun_{\CC|\CD}(\CM,\CM)$$ defines a symmetric monoidal functor $$\FZ: \mtc_k^\ind \to \bmtc_k.$$
	\end{thm}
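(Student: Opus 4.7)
The plan is to verify in turn that the assignment is well defined on objects, well defined on morphisms, respects identities and composition, and preserves the symmetric monoidal structure. For an indecomposable multi-tensor category $\CC$, I would first recall the standard fact that $\FZ(\CC)$ is a braided tensor category (the tensor unit of $\FZ(\CC)$ is simple precisely because $\CC$ is indecomposable). On morphisms, Example \ref{exam:fun_CD} already produces the monoidal $\FZ(\CC)$-$\FZ(\CD)$-bimodule structure on $\Fun_{\CC|\CD}(\CM,\CM)$; to see that this category is a genuine multi-tensor category (in particular rigid and finite), I would invoke Proposition \ref{prop:rec-mod} to write $\CM\simeq \RMod_A(\CC\boxtimes\CD^\rev)$ for some algebra $A$ and then identify $\Fun_{\CC|\CD}(\CM,\CM) \simeq \BMod_{A|A}(\CC\boxtimes\CD^\rev)$, which is manifestly finite, rigid, and monoidal. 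Independence of the construction up to the chosen representative of the morphism class reduces to the observation that an equivalence of $\CC$-$\CD$-bimodules $\CM\simeq\CM'$ induces a monoidal equivalence of endofunctor categories compatible with the maps $\psi$.

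For functoriality, the identity morphism $_\CC\CC_\CC$ is sent to $\Fun_{\CC|\CC}(\CC,\CC)\simeq\FZ(\CC)$ as monoidal $\FZ(\CC)$-$\FZ(\CC)$-bimodules, via the standard half-braiding identification. The composition law requires the key equivalence
\[
\Fun_{\CC|\CD}(\CM,\CM) \boxtimes_{\FZ(\CD)} \Fun_{\CD|\CE}(\CN,\CN) \;\simeq\; \Fun_{\CC|\CE}(\CM \boxtimes_\CD \CN,\, \CM \boxtimes_\CD \CN)
\]
as monoidal $\FZ(\CC)$-$\FZ(\CE)$-bimodules. The natural candidate sends $F\boxtimes G$ to the functor $F\boxtimes_\CD G$, which makes sense because both $F$ and $G$ are balanced $\CD$-module functors. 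To prove this is an equivalence I would again use the reconstruction theorems: represent $\CM\simeq\RMod_A(\CC\boxtimes\CD^\rev)$ and $\CN\simeq\RMod_B(\CD\boxtimes\CE^\rev)$, apply Proposition \ref{prop:rec-tp} to identify $\CM\boxtimes_\CD\CN$ with $\BMod$-categories over $A\boxtimes B$, and then recognize both sides of the displayed equivalence as $\BMod_{A\boxtimes B}(\CC\boxtimes\CD^\rev\boxtimes\CE^\rev)$-type categories (with appropriate $\CD$-coinvariants built in via $L_\CD$ as in Remark \ref{rem:tp-m}). Checking that the functor respects the monoidal structure and the $\FZ(\CC)$-$\FZ(\CE)$-actions amounts to tracking half-braidings through these identifications.

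For the symmetric monoidal structure, on objects one uses the canonical braided equivalence $\FZ(\CC\boxtimes\CD)\simeq\FZ(\CC)\boxtimes\FZ(\CD)$, which holds for indecomposable multi-tensor categories over an algebraically closed field. On morphisms, the required compatibility
\[
\Fun_{\CC\boxtimes\CD\,|\,\CC'\boxtimes\CD'}(\CM\boxtimes\CN,\CM\boxtimes\CN) \;\simeq\; \Fun_{\CC|\CC'}(\CM,\CM)\boxtimes\Fun_{\CD|\CD'}(\CN,\CN)
\]
again reduces, via Propositions \ref{prop:rec-mod} and \ref{prop:rec-tp}, to a statement about bimodule categories over the tensor product algebra $A\boxtimes B$. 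The symmetry constraint and the coherences then follow from the evident symmetry of Deligne's product and the fact that all the structural isomorphisms are constructed from canonical universal maps.

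The main obstacle I anticipate is the composition law. Establishing the underlying $k$-linear equivalence is routine from the reconstruction machinery, but verifying that the equivalence intertwines (i) the monoidal structures (a coherence about composing balanced functors), (ii) the half-braidings that encode the $\FZ(\CC)$- and $\FZ(\CE)$-actions on each side, and (iii) the identification $\psi_{\CM\boxtimes_\CD\CN}$ with the outer actions of $\psi_\CM$ and $\psi_\CN$—all simultaneously—requires a careful diagram chase through the balanced/coend presentation of $\boxtimes_{\FZ(\CD)}$. Every other ingredient (well-definedness, identity, symmetric monoidal coherence) reduces to bookkeeping on algebras in monoidal categories.
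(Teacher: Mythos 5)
Your outline is essentially the approach the paper takes: Theorem \ref{thm:functorial} is quoted from \cite{KZ1}, and the paper's accompanying remark isolates exactly the composition equivalence $\Fun_{\CB|\CC}(\CM_1,\CM_1)\boxtimes_{\FZ(\CC)}\Fun_{\CC|\CD}(\CM_2,\CM_2)\simeq\Fun_{\CB|\CD}(\CM_1\boxtimes_\CC\CM_2,\CM_1\boxtimes_\CC\CM_2)$ as the crux, which you also identify and propose to establish via the reconstruction results (Propositions \ref{prop:rec-mod} and \ref{prop:rec-tp}), just as in \cite{KZ1}. One small correction: your claim that $\Fun_{\CC|\CD}(\CM,\CM)\simeq\BMod_{A|A}(\CC\boxtimes\CD^\rev)$ is ``manifestly rigid'' is not right in general (bimodule categories over an arbitrary algebra need not be rigid), but this is harmless here because morphisms in $\bmtc_k$ are only required to be finite \emph{monoidal} categories equipped with a braided functor to their center, so rigidity is not needed.
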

	We refer to this functor $\FZ$ as the \textit{Drinfeld center functor}.
	
\begin{rem}
	Recall that, by Example \ref{exam:fun_CD}, $\Fun_{\CC|\CD}(\CM,\CM)$ has a structure of a monoidal $\FZ(\CC)$-$\FZ(\CD)$-bimodule.
	The functoriality of $\FZ$ (respecting the compositions of morphisms) follows from the equivalence
	\begin{equation}
	\begin{aligned}
	\Fun_{\CB|\CC}(\CM_1,\CN_1) \boxtimes_{\FZ(\CC)} \Fun_{\CC|\CD}(\CM_2,\CN_2) &\simeq \Fun_{\CB|\CD}(\CM_1\boxtimes_\CC\CM_2,\CN_1\boxtimes_\CC\CN_2), \\
	f \boxtimes_{\FZ(\CC)} g &\mapsto f \boxtimes_\CC g ,
	\end{aligned}
	\end{equation}
	and the fact that it is a monoidal equivalence when $\CN_1=\CM_1$,$\CN_2=\CM_2$ \cite[Theorem 3.1.7]{KZ1}.
\end{rem}
	
	In the rest of this subsection, we assume that $\chara k = 0$. 
	We recall two symmetric monoidal subcategories $\mfus_k^\ind \subset \mtc_k^\ind$ and $\bfus_k^\cl \subset \bmtc_k$ introduced in \cite{KZ1}.
	\begin{enumerate}
	\item The category $\mfus_k^\ind$ of indecomposable multi-fusion categories over $k$ with morphisms given by the equivalence classes of nonzero semisimple bimodules.
	\item The category $\bfus_k^\cl$ of nondegenerate braided fusion categories over $k$ with morphisms given by the equivalence classes of closed multi-fusion bimodules. 
	\end{enumerate}

	Now we are ready to state the main theorem of \cite{KZ1}.

\begin{thm} \label{thm:fully-faithful}
	The Drinfeld center functor $\FZ: \mtc^\ind_\bk \to \bmtc_\bk$ restricts to a fully faithful symmetric monoidal functor
	$$\FZ: \mfus^\ind_\bk \to \bfus^\cl_\bk. $$
\end{thm}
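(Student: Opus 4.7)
The functor itself is already constructed in Theorem \ref{thm:functorial}; what remains is to verify that the restriction lands in $\bfus^\cl_\bk$, and that it induces bijections on Hom-sets. I would organize the argument into three steps.

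\textbf{Step 1 (Well-definedness).} Given indecomposable multi-fusion $\CC,\CD$ and a nonzero semisimple $\CC$-$\CD$-bimodule $\CM$, I need to show that $\FZ(\CC),\FZ(\CD) \in \bfus^\cl_\bk$ and that $\FZ^{(1)}(\CM)=\Fun_{\CC|\CD}(\CM,\CM)$ is a closed multi-fusion bimodule. The nondegeneracy of $\FZ(\CC)$ for $\CC$ indecomposable multi-fusion in characteristic zero is a classical result (M\"uger/ENO). Multi-fusion-ness and semisimplicity of $\Fun_{\CC|\CD}(\CM,\CM)$ are standard since $\CM$ is semisimple and $\CC\boxtimes\CD^\rev$ is multi-fusion. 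The real content is the closedness of the structure functor $\psi: \overline{\FZ(\CC)}\boxtimes\FZ(\CD) \to \FZ(\Fun_{\CC|\CD}(\CM,\CM))$ from Example \ref{exam:fun_CD}. Setting $\CE:=\CC\boxtimes\CD^\rev$, bimodules over $(\CC,\CD)$ become left $\CE$-modules and $\FZ(\CE)\simeq\overline{\FZ(\CC)}\boxtimes\FZ(\CD)$, so closedness reduces to showing that $\FZ(\CE) \to \FZ(\Fun_\CE(\CM,\CM))$ is a braided equivalence. This is the Morita invariance of the Drinfeld center of a multi-fusion category, which one can prove by identifying both sides with $\Fun_{\CE|\CE}(\CE,\CE)$ and using Proposition \ref{prop:rec-tp} together with a dimension/simple-object count.

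\textbf{Step 2 (Faithfulness).} Suppose $\CM,\CN$ are semisimple $\CC$-$\CD$-bimodules and $F:\FZ^{(1)}(\CM) \xrightarrow{\simeq} \FZ^{(1)}(\CN)$ is an equivalence of monoidal bimodules. I would exploit the fact that $\CM$ is tautologically a left $\FZ^{(1)}(\CM)$-module via evaluation $(f,m) \mapsto f(m)$, and that under this action $\CM$ is indecomposable. Transporting through $F$ gives $\CM$ the structure of a left $\FZ^{(1)}(\CN)$-module; the compatibility of $F$ with $\psi_{\FZ^{(1)}(\CM)}$ and $\psi_{\FZ^{(1)}(\CN)}$ forces this action to commute with the $\CC\boxtimes\CD^\rev$-action (up to coherent isomorphism). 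Since $\CN$ is an indecomposable $\FZ^{(1)}(\CN)$-module witnessing the Morita equivalence between $\CE$ and $\FZ^{(1)}(\CN)$, comparing $\CM$ and $\CN$ as such modules yields a $\CC$-$\CD$-bimodule equivalence $\CM\simeq\CN$.

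\textbf{Step 3 (Fullness).} Given a closed multi-fusion bimodule $\CW$ with $\psi_\CW:\overline{\FZ(\CC)}\boxtimes\FZ(\CD)\simeq\FZ(\CW)$, I need to produce a semisimple $\CC$-$\CD$-bimodule $\CM$ with $\FZ^{(1)}(\CM)\simeq\CW$. With $\CE=\CC\boxtimes\CD^\rev$, the equivalence $\FZ(\CE)\simeq\FZ(\CW)$ means $\CE$ and $\CW$ have braided equivalent Drinfeld centers; by the Morita-theoretic dictionary for indecomposable multi-fusion categories over an algebraically closed field of characteristic zero, this forces $\CE$ and $\CW$ to be Morita equivalent. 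A concrete witness of this Morita equivalence is an indecomposable semisimple $(\CE,\CW)$-bimodule $\CM$, and I take this $\CM$ (viewed as a $\CC$-$\CD$-bimodule by forgetting the $\CW$-action on the right) as the preimage. One then checks $\Fun_\CE(\CM,\CM)\simeq\CW$ as monoidal categories, and finally that the induced braided equivalence on centers matches $\psi_\CW$.

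\textbf{Main obstacle.} Steps 1 and 2 are reasonably formal; the hard part is Step 3, and within Step 3 the most delicate point is matching the monoidal-bimodule structure. Producing $\CM$ as a Morita witness gives only a monoidal equivalence $\Fun_\CE(\CM,\CM)\simeq\CW$; upgrading this to an equivalence of monoidal $\FZ(\CC)$-$\FZ(\CD)$-bimodules requires showing that the canonical half-braidings on $\Fun_\CE(\CM,\CM)$ (coming from Example \ref{exam:fun_CD}) are transported to $\psi_\CW$ under this equivalence. I would handle this either by a universal property of $\FZ^{(1)}$ that encodes the half-braiding data intrinsically, or by a direct but careful computation using that $\psi_\CW$ is determined on generators by the closedness hypothesis together with the $\CE$-action.
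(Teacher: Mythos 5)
First, a point of order: the paper does not prove this statement at all --- Theorem \ref{thm:fully-faithful} is quoted as the main theorem of the reference [KZ1], so there is no in-paper proof to compare against; your plan has to be measured against the argument of [KZ1]. Your Step 1 is sound and is essentially the standard route: nondegeneracy of $\FZ(\CC)$ is the M\"uger/ENO theorem, and closedness of $\Fun_{\CC|\CD}(\CM,\CM)$ reduces, after setting $\CE=\CC\boxtimes\CD^\rev$, to the Morita invariance of the Drinfeld center, using that a nonzero semisimple module over an indecomposable multi-fusion category automatically makes $\CM$ an invertible $\CE$-$\Fun_\CE(\CM,\CM)$-bimodule.

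The genuine gap sits in Steps 2 and 3, and it is the same gap twice: the entire content of fully-faithfulness is that $\CM\mapsto\bigl(\Fun_\CE(\CM,\CM),\psi\bigr)$ is a \emph{bijection} between equivalence classes of nonzero semisimple $\CE$-modules and closed bimodule structures, and both halves of that bijection are asserted rather than proved. In Step 2, ``comparing $\CM$ and $\CN$ as such modules yields an equivalence'' is exactly the injectivity half: a priori the relative tensor product of $\CN$ with the opposite of $\CM$ over $\CE$ is merely \emph{some} invertible $\CW$-$\CW$-bimodule, i.e.\ an arbitrary element of $\mathrm{BrPic}(\CW)$, and you must show that compatibility of $F$ with $\psi_{\FZ^{(1)}(\CM)}$ and $\psi_{\FZ^{(1)}(\CN)}$ trivializes it; nothing in your sketch does this. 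In Step 3, ``this forces $\CE$ and $\CW$ to be Morita equivalent'' invokes Corollary \ref{cor:Me-Ze}, and choosing the witness $\CM$ that realizes the \emph{given} $\psi_\CW$ (rather than an arbitrary Morita witness, which would generally give a wrong preimage) invokes the surjectivity of $\mathrm{BrPic}(\CE)\to\mathrm{Aut}^{br}(\FZ(\CE))$, i.e.\ Corollary \ref{cor:BP-Aut}. Since the paper derives Corollaries \ref{cor:Me-Ze} and \ref{cor:BP-Aut} \emph{from} this theorem, your plan is circular relative to the paper's logic; it becomes non-circular only by importing ENO2/ENO3 wholesale, and even then the residue --- tracking the half-braiding data through the Morita dictionary, which you correctly isolate as the ``main obstacle'' --- is precisely where [KZ1] spends its effort (via an analysis of $\Fun_{\CC|\CD}(\CM,\CN)$ as an invertible bimodule over the two functor categories, equivalently via Lagrangian algebras in $\FZ(\CC\boxtimes\CD^\rev)$). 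So the skeleton is the right shape, but the load-bearing steps are missing.
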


	There are two well-known results proved by Etingof, Nikshych and Ostrik in \cite{ENO2,ENO3}, which can be regarded as corollaries of Theorem \ref{thm:fully-faithful}: 

\begin{cor} \label{cor:Me-Ze} (\cite{ENO2})
	Two multi-fusion categories $\CC$ and $\CD$ are Morita equivalent if and only if $\FZ(\CC) \simeq^{br} \FZ(\CD)$, i.e. $\FZ(\CC)$ is braided monoidally equivalent to $\FZ(\CD)$.
\end{cor}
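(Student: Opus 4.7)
The plan is to derive both implications from Theorem \ref{thm:fully-faithful}, i.e.\ the full faithfulness of the symmetric monoidal functor $\FZ : \mfus^\ind_\bk \to \bfus^\cl_\bk$.

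For the backward direction $(\Leftarrow)$, suppose $\phi : \FZ(\CC) \to \FZ(\CD)$ is a braided monoidal equivalence. I would promote $\phi$ to an invertible morphism in $\bfus^\cl_\bk$: equip $\FZ(\CD)$ with the monoidal $\FZ(\CC)$-$\FZ(\CD)$-bimodule structure whose right $\FZ(\CD)$-action is regular and whose left $\FZ(\CC)$-action is pulled back along $\phi$. The required braided monoidal functor $\overline{\FZ(\CC)} \boxtimes \FZ(\CD) \to \FZ(\FZ(\CD))$ is then taken to be the composite of $\bar\phi \boxtimes \Id_{\FZ(\CD)}$ with the canonical equivalence $\overline{\FZ(\CD)} \boxtimes \FZ(\CD) \simeq \FZ(\FZ(\CD))$ (an equivalence because $\FZ(\CD)$ is nondegenerate); since both factors are equivalences, this bimodule is closed. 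The analogous construction using $\phi^{-1}$ produces an inverse in $\bfus^\cl_\bk$, the two relative tensor products collapsing to the regular bimodules via the cancellation $\phi^{-1} \circ \phi = \Id$. By the full faithfulness of $\FZ$, this invertible morphism lifts to an invertible semisimple $\CC$-$\CD$-bimodule in $\mfus^\ind_\bk$, which is precisely a Morita equivalence.

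For the forward direction $(\Rightarrow)$, let $\CM$ be an invertible $\CC$-$\CD$-bimodule. Invertibility forces the left $\CC$-action on $\CM$ to realize a monoidal equivalence $\CC \simeq \Fun_\CD(\CM,\CM)$. Under this identification, a $\CC$-$\CD$-bimodule endofunctor of $\CM$ corresponds to an object of $\CC$ equipped with a half-braiding compatible with the entire left $\CC$-action, so $\Fun_{\CC|\CD}(\CM,\CM) \simeq \FZ(\CC)$ as monoidal categories; comparing with the half-braidings described in Example \ref{exam:fun_CD} upgrades this to a braided monoidal equivalence. Symmetrically $\Fun_{\CC|\CD}(\CM,\CM) \simeq \FZ(\CD)$, and composition yields $\FZ(\CC) \simeq^{br} \FZ(\CD)$.

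The main obstacle is in the backward direction, where one must carefully assemble the closed monoidal bimodule structure on $\FZ(\CD)$ and verify that the two candidate morphisms constructed from $\phi$ and $\phi^{-1}$ are mutually inverse in $\bfus^\cl_\bk$, so that Theorem \ref{thm:fully-faithful} applies to produce a genuine Morita equivalence. The forward direction rests on the well-known identification $\Fun_{\CC|\CD}(\CM,\CM) \simeq \FZ(\CC)$ for invertible $\CM$, which amounts to bookkeeping once the bimodule structure is tracked.
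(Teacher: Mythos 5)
Your argument is correct and follows exactly the route the paper intends: the corollary is presented there as an immediate consequence of Theorem \ref{thm:fully-faithful}, with the backward direction obtained by lifting the closed bimodule ${}_{\phi}\FZ(\CD)$ through the fully faithful functor (which reflects invertibility) and the forward direction being the standard identification $\FZ(\CC)\simeq\Fun_{\CC|\CD}(\CM,\CM)\simeq\FZ(\CD)$ for invertible $\CM$, i.e.\ the $\CE=\bk$ case of Proposition \ref{prop:ZC-ZD-bmeE}. The details you supply (nondegeneracy of $\FZ(\CD)$ making the twisted regular bimodule closed, and the cancellation of the two composites) are exactly the right ones.
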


	Given a multi-fusion category $\CC$, we denote the group of the equivalence classes of semisimple invertible $\CC$-$\CC$-bimodules by $\mathrm{BrPic}(\CC)$, and denote the group of the isomorphism classes of braided auto-equivalences of $\FZ(\CC)$ by $\mathrm{Aut}^{br}(\FZ(\CC))$. 
	
\begin{cor} \label{cor:BP-Aut} (\cite{ENO3})
	Let $\CC$ be an indecomposable multi-fusion category.
	We have $\mathrm{BrPic}(\CC) \simeq \mathrm{Aut}^{br}(\FZ(\CC))$ as groups.
\end{cor}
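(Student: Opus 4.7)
The plan is to derive the result directly from Theorem \ref{thm:fully-faithful}. Since $\FZ: \mfus^\ind_\bk \to \bfus^\cl_\bk$ is fully faithful and preserves identities and composition, it restricts to a monoid isomorphism on endomorphisms
\begin{equation*}
\End_{\mfus^\ind_\bk}(\CC) \xrightarrow{\ \sim\ } \End_{\bfus^\cl_\bk}(\FZ(\CC)),
\end{equation*}
which in turn restricts to a group isomorphism on the groups of units. Composition on the left is $\boxtimes_\CC$, and by definition the units are exactly the equivalence classes of semisimple invertible $\CC$-$\CC$-bimodules, i.e., $\mathrm{BrPic}(\CC)$. The corollary therefore reduces to identifying the units on the right with $\mathrm{Aut}^{br}(\FZ(\CC))$.

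Writing $\CB := \FZ(\CC)$, I would introduce the map
\begin{equation*}
\Phi: \mathrm{Aut}^{br}(\CB) \to \End_{\bfus^\cl_\bk}(\CB), \qquad \phi \mapsto \CB_\phi,
\end{equation*}
where $\CB_\phi$ is $\CB$ viewed as a monoidal category with the closed bimodule structure
\begin{equation*}
\psi_{\CB_\phi}: \bar{\CB} \boxtimes \CB \xrightarrow{\phi \boxtimes \Id} \bar{\CB} \boxtimes \CB \xrightarrow{\ \sim\ } \FZ(\CB),
\end{equation*}
the second arrow being the canonical braided equivalence coming from nondegeneracy of $\CB$. A direct check gives $\CB_\phi \boxtimes_\CB \CB_{\phi'} \simeq \CB_{\phi\phi'}$ and $\CB_{\Id} \simeq \CB$, so $\Phi$ lands in the units of $\End_{\bfus^\cl_\bk}(\CB)$ and is a group homomorphism. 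Injectivity is immediate: restricting $\psi_{\CB_\phi}$ to the first tensor factor recovers $\phi$ up to natural isomorphism.

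The main obstacle will be surjectivity of $\Phi$: every invertible closed monoidal $\CB$-$\CB$-bimodule $\CM$ must be exhibited (as a closed monoidal bimodule) in the form $\CB_\phi$. I would proceed by using invertibility to deduce that $\CM$ has the same Frobenius--Perron dimension as $\CB$, and using closedness $\psi_\CM: \bar{\CB} \boxtimes \CB \xrightarrow{\sim} \FZ(\CM)$ composed with the forgetful functor $\pi_\CM: \FZ(\CM) \to \CM$ to obtain two mutually centralizing monoidal functors $\bar{\CB}, \CB \to \CM$. Dimension counting should then promote the induced functor to a monoidal equivalence $\CM \simeq \CB$, and transporting $\psi_\CM$ along this equivalence produces the sought-for braided auto-equivalence $\phi$ of $\CB$. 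Verifying that $\phi$ is actually \emph{braided} (rather than merely monoidal) is where the closedness of $\CM$, and not just its invertibility as a bimodule, is essential; I expect this compatibility to be the delicate part of the argument.
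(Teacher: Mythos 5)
Your reduction of the corollary to an isomorphism of unit groups under the fully faithful functor $\FZ$ is correct, and the first half of your argument (units of $\End_{\mfus^\ind_\bk}(\CC)$ are exactly $\mathrm{BrPic}(\CC)$; the assignment $\phi\mapsto\CB_\phi$ is an injective group homomorphism into the units of $\End_{\bfus^\cl_\bk}(\CB)$) is sound and is essentially the skeleton the paper relies on via Proposition \ref{prop:ZC-ZD-bmeE} and Example \ref{exam:ZC-ZD-bimcE}. The problem is the step you yourself flag as the main obstacle: your proposed proof that every invertible closed bimodule is of the form $\CB_\phi$ does not close. A monoidal functor between fusion categories of equal Frobenius--Perron dimension need not be an equivalence unless it is in addition fully faithful or dominant (e.g.\ the tensor endofunctor of $\mathrm{Vec}_{\Z/4}$ induced by $\Z/4\twoheadrightarrow\Z/2\hookrightarrow\Z/4$ preserves $\mathrm{FPdim}$ but is neither). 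For the functor you want to promote, namely $\CB\simeq\one\boxtimes\CB\hookrightarrow\bar\CB\boxtimes\CB\xrightarrow{\psi_\CM}\FZ(\CM)\xrightarrow{\pi_\CM}\CM$, closedness only gives that the restriction of $\psi_\CM$ is fully faithful into $\FZ(\CM)$; composing with $\pi_\CM$ destroys fullness (the forgetful functor from the center is faithful and dominant but not full), and dominance of the composite $\pi_\CM\circ\psi_\CM$ on all of $\bar\CB\boxtimes\CB$ does not yield dominance of its restriction to one factor. So ``dimension counting'' by itself cannot produce the equivalence $\CM\simeq\CB$; the invertibility of $\CM$ must enter in a more substantial way than through $\mathrm{FPdim}(\CM)=\mathrm{FPdim}(\CB)$ (which, moreover, only follows once you also rule out $\CM$ being properly multi-fusion).

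The efficient repair stays inside the machinery you already have. Given an invertible closed $\CB$-$\CB$-bimodule $\CM$, fullness of $\FZ$ (Theorem \ref{thm:fully-faithful}) produces a nonzero semisimple $\CC$-$\CC$-bimodule $\CN$ with $\CM\simeq\Fun_{\CC|\CC}(\CN,\CN)$ as closed bimodules; full faithfulness transports invertibility of $\CM$ in $\bfus^\cl_\bk$ to invertibility of $\CN$ in $\mfus^\ind_\bk$; and then the $\CE=\bk$ case of Proposition \ref{prop:ZC-ZD-bmeE} together with Example \ref{exam:ZC-ZD-bimcE} exhibits $\Fun_{\CC|\CC}(\CN,\CN)\simeq\CB_{R^{-1}\circ L}$, i.e.\ places it in the image of your $\Phi$, with braidedness of $R^{-1}\circ L$ coming from the commuting-square argument in that proposition rather than from a separate compatibility check. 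This is the route the paper intends; note that it never needs an abstract classification of invertible closed bimodules, only that $\CB_\phi$ is a legitimate morphism of $\bfus^\cl_\bk$ for each $\phi$ and that $\FZ^{(1)}$ of an invertible bimodule has this twisted form.
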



\section{Symmetry enriched categories} \label{sec:SEC}

	In this section, we give definitions for and focus on some kinds of categories with symmetry, and collectively call them \textit{symmetry enriched categories}. Let $k$ be a field and $\CE$ be a symmetric multi-tensor category over $k$ with the braiding $\beta$.


\subsection{Some properties of a symmetric multi-tensor category and its modules}
	
	Because of the symmetry of $\CE$, a left $\CE$-module is also a right $\CE$-module, induced by the evident equivalence $\CE^\rev \simeq \CE$. 
	More precisely, let $\CC$ be a left $\CE$-module, whose left $\CE$-action is denoted by $\odot : \CE \times \CC \to \CC , (e , x) \mapsto e \odot x$ and	left-$\CE$-module associativity constraint is denoted by $m_{e',e,x}:(e' \otimes e) \odot x \to e' \odot (e \odot x)$.
	Then, there is a canonical right-$\CE$-module structure on it as follows: 
	
	The right $\CE$-action $\odot : \CC \times \CE \to \CC , (x , e) \mapsto x \odot e$ is exactly defined by its left $\CE$-action, i.e. $x \odot e = e \odot x$ (for convenience, we denote this equality by $b_{x,e}$).
	
	The right-$\CE$-module associativity constraint $m_{x,e,e'}$ is defined as the composition 
	$$
	\begin{aligned}
	x \odot (e \otimes e') 
	&\xrightarrow{b_{x, e \otimes e'}} (e \otimes e') \odot x \xrightarrow{\beta_{e,e'} \odot \Id_x} (e' \otimes e) \odot x \\	&\xrightarrow{m_{e',e,x}} e' \odot (e \odot x) 
	\xrightarrow{b^{-1}_{e \odot x,e'}} (e \odot x) \odot e' \xrightarrow{b^{-1}_{x,e} \odot e'} (x \odot e) \odot e'. \\
	\end{aligned}
	$$  
	(Similar analysis can be seen in \cite{Gr} for module categories over a braided monoidal category.)
	
	Thus, we can briefly collectively use the notion of an \textit{$\CE$-module} to substitute that of a left $\CE$-module or a right $\CE$-module. Meanwhile, a left-$\CE$-module functor is also a right-$\CE$-module functor, so we can briefly call it an \textit{$\CE$-module functor}. 
	
	
	Recall that, given a (left) $\CE$-module $\CM$, we have two ways to make $\CM^\op$ a (right) $\CE$-module ($\CM^{\op|L}$ and $\CM^{\op|R}$). Due to the symmetry of $\CE$, we are confident that these two ways are equivalent, i.e. there is an $\CE$-module equivalence $$\CM^{\op|R} \simeq \CM^{\op|L}.$$
	
	To give the equivalence precisely, we need the following notions and facts :
		
	Given a rigid monoidal category $\CC$, a \textit{pivotal structure} on $\CC$ is an isomorphism of monoidal functors $\alpha : \Id_\CC \to \delta^{LL}$, i.e. a natural isomorphism $\alpha_x : x \to x^{LL}, x \in \CC$ such that $\alpha_{x \otimes y} = \alpha_x \otimes \alpha_y$ for any $x,y \in \CC$.
	In addition, if $\CC$ is braided with the braiding $c$, we can define the \textit{Drinfeld morphism} $u_x$ (see \cite[Section 2.2]{BK} or \cite[Chapter XIV]{Ka}) as the composition $$x \xrightarrow{\Id_x \otimes \coev_{x^L}} x \otimes x^L \otimes x^{LL} \xrightarrow{c_{x,x^L} \otimes \Id_{x^{LL}}} x^L \otimes x \otimes x^{LL} \xrightarrow{\ev_x \otimes \Id_{x^{LL}}} x^{LL}.$$ According to \cite[Proposition 8.9.3]{EGNO}, we have that $u_x \otimes u_y = u_{x \otimes y} \circ c_{y,x} \circ c_{x,y}$ for $x,y \in \CC$.
	Furthermore, if $\CC$ is symmetric, $u$ is a pivotal structure.
	
	Thus, the Drinfeld morphism $u_{e^R} : e^R \to e^L , e \in \CE$ directly induces an $\CE$-module equivalence $\CM^{\op|R} \simeq \CM^{\op|L}$, and we can use the notation $\CM^\op$ to substitute $\CM^{\op|L}$ or $\CM^{\op|R}$.

	Let $A$ be an algebra in $\CE$. Then, we claim that any right $A$-module $x \in \CE$ automatically has a structure of a left $A$-module. Without loss of generality, we just consider free modules: for a free right $A$-module $e \otimes A$, one can define its left $A$-action as the composed morphism $$A \otimes e \otimes A \xrightarrow{\beta_{A,e} \otimes \Id_A} e \otimes A \otimes A \xrightarrow{\Id_e \otimes m_A} e \otimes A,$$ where $\beta$ denotes the braiding in $\CE$ (recall that $A$ is also an object in $\CE$) and $m_A$ is the multiplication of the algebra $A$.

	Next, we consider the decomposition of $\CE$.
	Assume $\one = \oplus_{i \in \Lambda} e_i$ with $e_i$ simple. Let $\CE_{ij} = e_i \otimes \CE \otimes e_j$. Recall that for $i \neq j$, we have $e_i \otimes e_j \simeq 0$. Then, we have $\CE_{ij} = e_i \otimes \CE \otimes e_j \simeq e_i \otimes e_j \otimes \CE \simeq 0$, which leads $\CE \simeq \oplus_{i \in \Lambda} \CE_{ii}$. (We refer readers to \cite{KZ1} for the theory of the decomposition of a multi-tensor category.)
	
	As a consequence, we have the following fact:
\begin{rem} \label{rem:E-ten-ind}
	For a symmetric multi-tensor category $\CE$, it is indecomposable (as a multi-tensor category) if and only if $\CE$ is a tensor category, i.e. $\one_\CE$ is simple.
\end{rem}

\subsection{Finite categories over $\CE$}

\begin{defn} \label{def:c_E}
	A \textit{finite category over $\CE$} is a finite $\CE$-module. We say a functor is \textit{$\CE$-linear} if it is an $\CE$-module functor between two finite categories over $\CE$.
\end{defn}

	Naturally, we have a 2-category $\Cat_\CE$ of finite categories over $\CE$, $\CE$-linear functors and natural transformations between $\CE$-linear functors.

	Since a finite category over $\CE$ has both left and right $\CE$-actions, the Deligne's tensor product over $\CE$, $\boxtimes_\CE$, is well-defined between two finite categories over $\CE$, and we have the following facts:
	
\begin{rem}
	Let $\CB$,$\CC$ and $\CD$ be finite categories over $\CE$. It follows directly from the definition that 
	\begin{enumerate}
	\item $\CC \boxtimes_\CE \CD$ is also a finite category over $\CE$. 
	\item the evident equivalence $\CC \boxtimes_\CE \CD \simeq \CD \boxtimes_\CE \CC$ and $(\CB \boxtimes_\CE \CC) \boxtimes_\CE \CD \simeq \CB \boxtimes_\CE (\CC \boxtimes_\CE \CD)$ are the equivalence of categories over $\CE$.
	\end{enumerate}
\end{rem}
	Thus, we obtain that $(\Cat_\CE,\boxtimes_\CE,\CE)$ defines a symmetric monoidal 2-category.

\begin{rem}
	For $\CX_1,\CX_2,...,\CX_n \in \Cat_\CE$, similar to the \textit{coherence theorem} in monoidal categories (see \cite{Mac} or \cite[Theorem 2.9.2]{EGNO} for details), on the product $\CX_1 \boxtimes_\CE ... \boxtimes_\CE \CX_n$, all the $\CE$-module structures (induced by $\CX_i$) are equivalence, and thus one can safely identify all of them with each other.
\end{rem}

	We have an extrinsic explanation on the monoidal structure of the 2-category $\Cat_\CE$ as follows. (Similar analysis for the 2-category formed by fusion categories over $\CE$ can be seen in \cite{DNO}.)
	
	Firstly, let us see the functoriality of 2-categories $\Cat_\CE$ with respect to symmetric monoidal functors $\Omega:\CE \to \CF$, for another symmetric multi-tensor category $\CF$. Construct a \textit{base change 2-functor} 
	$$- \boxtimes_\CE \CF : \Cat_\CE \to \Cat_\CF , \ \CC \mapsto \CC \boxtimes_\CE \CF,$$
	where the $\CE$-module structure on $\CF$ is given by the composed functors $\CE \times \CF \xrightarrow{\Omega \times \Id_\CF} \CF \times \CF \xrightarrow{\otimes} \CF$. Indeed, the base change 2-functors are monoidal:
	$$(\CC \boxtimes_\CE \CD) \boxtimes_\CE \CF \simeq (\CC \boxtimes_\CE \CF) \boxtimes_\CF (\CD \boxtimes_\CE \CF).$$
	
	Also note that the product induces a 2-functor
	$$\Cat_\CE \times \Cat_\CF \to \Cat_{\CE \times \CF} , \ (\CC,\CD) \mapsto \CC \times \CD.$$
	Assume that $\CF = \CE$ and compose the above 2-functor with the base change 2-functor 
	$$- \boxtimes_{\CE \times \CE} \CE : \Cat_{\CE \times \CE} \to \Cat_\CE$$
	induced by the tensor product functor $\CE \times \CE \xrightarrow{\boxtimes_\CE} \CE$.
	This brings a 2-functor 
	$$\Cat_\CE \times \Cat_\CE \to \Cat_\CE , \ (\CC,\CD) \mapsto \CC \boxtimes_\CE \CD,$$
	which gives the monoidal structure on the 2-category $\Cat_\CE$.

\subsection{Finite monoidal categories over $\CE$}	
	Definitions similar to the following two appeared earlier in \cite[Definition 4.16]{DGNO}.

\begin{defn} \label{def:mc_E}
	A \textit{finite monoidal category over $\CE$} is a finite monoidal category $\CC$ equipped with a right exact $k$-linear braided monoidal functor $\phi_\CC:\CE \to \FZ_1(\CC)$. 
	We say that $\CC$ is \textit{fully faithful over $\CE$} if $\phi_\CC$ is fully faithful and carries simple objects to simple objects.
\end{defn}

\begin{defn} \label{def:bmc_E}
	A \textit{finite braided monoidal category over $\CE$} is a finite braided monoidal category $\CC$ equipped with a right exact $k$-linear braided monoidal functor $\phi_\CC:\CE \to \FZ_2(\CC)$.
	We say that $\CC$ is \textit{fully faithful over $\CE$} if $\phi_\CC$ is fully faithful and carries simple objects to simple objects.
\end{defn}
	
\begin{rem}
	It follows directly from the definition that
	\begin{enumerate}
		\item If $\CC$ is a finite monoidal category over $\CE$, so is $\CC^\rev$.
		\item If $\CC$ is a finite braided monoidal category over $\CE$, so is $\bar{\CC}$.
	\end{enumerate}
\end{rem}
	
	One can also add rigidity and semisimplicity conditions to these notions. 
	For rigid cases, we need assume $k$ is an algebraically closed field and $\CE$ is a symmetric fusion category over $k$; and for semisimple cases, we need assume $\chara k= 0$ in addition. In what follows, when talking about these cases, we automatically add these assumptions.
	Then we have the following definitions:
\begin{defn} \label{def:tc_E}
	\begin{enumerate}
		\item A \textit{multi-tensor category over $\CE$} is a rigid finite monoidal category over $\CE$. A \textit{tensor category over $\CE$} is a multi-tensor category over $\CE$ with a simple tensor unit.
		\item A \textit{multi-fusion category over $\CE$} is a semisimple multi-tensor category over $\CE$. A \textit{fusion category over $\CE$} is a semisimple tensor category over $\CE$.
	\end{enumerate}
\end{defn}

\begin{defn} \label{def:btc_E}
	\begin{enumerate}
		\item A \textit{braided multi-tensor category over $\CE$} is a rigid finite braided monoidal category over $\CE$. A \textit{braided tensor category over $\CE$} is a braided multi-tensor category over $\CE$ with a simple tensor unit.
		\item A \textit{braided multi-fusion category over $\CE$} is a semisimple braided multi-tensor category over $\CE$. A \textit{braided fusion category over $\CE$} is a semisimple braided tensor category over $\CE$.
	\end{enumerate}
\end{defn}

\begin{defn} \label{def:center_E}
	Let $\CC$ be a finite monoidal category over $\CE$. The \textit{$/\CE$-center} $\FZ_{/\CE}(\CC)$ of $\CC$ is defined as the centralizer of $\CE$ in the Drinfeld center $\FZ_1(\CC)$, i.e. $\FZ_{/\CE}(\CC):=\CE'|_{\FZ_1(\CC)}$.
\end{defn}

\begin{rem}
	We can see Definition \ref{def:mc_E} \ref{def:bmc_E} and \ref{def:center_E} in the sense of $E_n$-algebras (see \cite{Lu}).
	In fact, in the symmetric monoidal 2-category $\Cat_\CE$, 
	\begin{enumerate}
	\item a finite monoidal category over $\CE$ is an $E_1$-algebra;
	\item a finite braided monoidal category over $\CE$ is an $E_2$-algebra.
	\end{enumerate}
	Moreover, we have
	\begin{enumerate}
	\item for a finite category $\CC$ over $\CE$, the $E_0$-center of $\CC$ is nothing but $\Fun_\CE(\CC,\CC)$, which is a finite monoidal category over $\CE$;
	\item for a finite monoidal category over $\CE$, the $/\CE$-center is exactly its $E_1$-center, which is a finite braided monoidal category over $\CE$;
	\item for a finite braided monoidal category over $\CE$, the M\"uger center is its $E_2$-center.
	\end{enumerate}
	As a consequence, for finite monoidal categories $\CC,\CD$ over $\CE$, $\CC \boxtimes_\CE \CD$ is also a finite monoidal category over $\CE$.
	From another point of view, we have a right exact $k$-linear braided monoidal functor $\CE \simeq \CE \boxtimes_\CE \CE \xrightarrow{\phi_\CC \boxtimes_\CE \phi_\CD} \FZ(\CC) \boxtimes_\CE \FZ(\CD) \to \FZ(\CC \boxtimes_\CE \CD)$.
\end{rem}
	
	Let us consider the notion of a \textit{finite left (or right) $\CC$-module category over $\CE$}, where $\CC$ is a finite monoidal category over $\CE$. A finite left $\CC$-module category over $\CE$ is a finite category over $\CE$ firstly. And it also admits a left $\CC$-module structure. Finally, these two structures must be compatible. Note that a finite left $\CC$-module automatically has a $\CE$-module structure induced by its left $\CC$-module structure: $\CE \xrightarrow{\phi_\CC} \FZ(\CC) \xrightarrow{\pi_\CC} \CC \xrightarrow{c \ \mapsto \ c \odot -} \Fun(\CM,\CM)$, and these two structures are compatible naturally. Thus, we do not need to consider the $\CE$-module structure independently. 

\begin{rem}
	Let $\CC$ be a multi-tensor category over $\CE$ and $_\CC\CM,_\CC\CN$ finite left $\CC$-modules. Then we have $\Fun_\CC(\CM,\CN)$ is a finite category over $\CE$ and $\Fun_\CC(\CM,\CM)$ (called the \textit{dual category} of $\CC$ in \cite{EO}) is a finite monoidal category over $\CE$.
\end{rem}	
		
	Then we give the definition of finite bimodules over $\CE$ as follows:	
\begin{defn} \label{def:bim_E}
	Let $\CC,\CD$ be finite monoidal categories over $\CE$. A \textit{finite $\CC$-$\CD$-bimodule over $\CE$} is a finite left $\CC \boxtimes_\CE \CD^\rev$-module. 
\end{defn}
	
\begin{rem}
	It is clear from definition that a finite left $\CC$-module over $\CE$ is exactly a finite $\CC$-$\CE$-bimodule over $\CE$; a finite right $\CC$-module over $\CE$ is exactly a finite $\CE$-$\CC$-bimodule over $\CE$; a finite category over $\CE$ is exactly a finite $\CE$-$\CE$-bimodule over $\CE$.
\end{rem}

\begin{rem} \label{rem:bim_E}
	In other words, a finite $\CC$-$\CD$-bimodule over $\CE$ is a finite $\CC$-$\CD$-bimodule $_\CC\CM_\CD$ equipped with a natural isomorphism between two $\CC$-$\CD$-bimodule functors $\eta_{\CM,e}:\pi_\CC(\phi_\CC(e)) \odot - \to - \odot \pi_\CD(\phi_\CD(e))$ for $e \in \CE$.
	In what follows, we omit the forgetful functors $\pi$ for simplicity unless confusion is possible.
\end{rem}

\begin{defn}
	Let $\CC$ and $\CD$ be two finite monoidal categories over $\CE$ and $_\CC\CM_\CD$ a finite $\CC$-$\CD$-bimodule over $\CE$. 
	We say that $\CM$ is \textit{invertible} if there is a finite $\CD$-$\CC$-bimodule $\CN$ over $\CE$ such that there are equivalences $\CM \boxtimes_\CD \CN \simeq \CC$ as $\CC$-$\CC$-bimodules over $\CE$ and $\CN \boxtimes_\CC \CM \simeq \CD$ as $\CD$-$\CD$-bimodules over $\CE$. 
	If such an invertible bimodule over $\CE$ exists, $\CC$ and $\CD$ are said to be \textit{Morita equivalent over $\CE$}.
\end{defn}

\subsection{Finite braided monoidal categories containing $\CE$ and monoidal modules} 

\begin{defn} \label{def:bmccE}
	A \textit{finite braided monoidal category containing $\CE$} is a finite braided monoidal category $\CC$ equipped with a right exact $k$-linear braided monoidal functor $\phi_\CC:\CE \to \CC$. 
	We say that $\CC$ is \textit{fully faithful containing $\CE$} if $\phi_\CC$ is fully faithful and carries simple objects to simple objects.
\end{defn}

\begin{rem}
	The following facts are clear from Definition \ref{def:bmccE}:
	\begin{enumerate}
	\item For a finite monoidal category $\CC$ over $\CE$, its Drinfeld center $\FZ(\CC)$ is a finite braided monoidal category containing $\CE$;
	\item For a finite braided monoidal category $\CC$ containing $\CE$, the centralizer of $\CE$ in $\CC$ is a finite braided monoidal category over $\CE$.
	\item A finite braided monoidal category $\CC$ over $\CE$ can be seen as a finite braided monoidal category containing $\CE$, with the braided monoidal functor $\CE \to \CC$ defined as the composition $\CE \to \FZ_2(\CC) \hookrightarrow \CC$.
	\item A finite braided monoidal category $\CC$ containing $\CE$ can be seen as a finite monoidal category over $\CE$, with the braided monoidal functor $\CE \to \FZ_1(\CC)$ defined as the composition $\CE \to \CC \hookrightarrow \FZ_1(\CC)$.	
	\end{enumerate}
\end{rem}

	We also have the following definitions for rigid or semisimple cases:
\begin{defn} \label{def:btccE}
	\begin{enumerate}
	\item A \textit{braided multi-tensor category containing $\CE$} is a rigid finite braided monoidal category containing $\CE$. A \textit{braided tensor category containing $\CE$} is a braided multi-tensor category containing $\CE$ with a simple tensor unit.
	\item A \textit{braided multi-fusion category containing $\CE$} is a semisimple braided multi-tensor category containing $\CE$. A \textit{braided fusion category containing $\CE$} is a semisimple braided tensor category containing $\CE$.
	\end{enumerate}
\end{defn}	
	
\begin{defn} \label{def:bmfE}
	Let $\CC,\CD$ be two braided monoidal categories containing $\CE$. A \textit{braided monoidal functor respecting $\CE$} is a braided monoidal functor $F:\CC \to \CD$ equipped with an isomorphism $\delta:F \circ \phi_\CC \simeq \phi_\CD$ of braided monoidal functors.
\end{defn}
	
\begin{prop} \label{prop:ZC-ZD-bmeE}
	Let $\CC,\CD$ be finite monoidal categories over $\CE$ and $_\CC\CM_\CD$ an invertible $\CC$-$\CD$-bimodule over $\CE$. Then, the evident monoidal functors $\FZ(\CC) \rightarrow \Fun_{\CC|\CD}(\CM,\CM) \leftarrow \FZ(\CD)$ are equivalences. Moreover, the induced equivalence $\FZ(\CC) \simeq \FZ(\CD)$ is a braided monoidal equivalence respecting $\CE$.
\end{prop}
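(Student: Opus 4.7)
The plan is to deduce the equivalence claims from the non-enriched theory in \cite{KZ1} and then to use the $\CE$-bimodule data on $\CM$ to produce the coherence isomorphism required by Definition \ref{def:bmfE}. First I would observe that invertibility of $_\CC\CM_\CD$ over $\CE$ is, in particular, invertibility as a plain $\CC$-$\CD$-bimodule: the relative tensor products $\boxtimes_\CC$ and $\boxtimes_\CD$ agree in both settings, and the defining equivalences $\CM \boxtimes_\CD \CN \simeq \CC$ and $\CN \boxtimes_\CC \CM \simeq \CD$ as bimodules over $\CE$ hold a fortiori as plain bimodule equivalences. By the standard Morita-theoretic consequence of Theorem \ref{thm:functorial} (see also Corollary \ref{cor:Me-Ze} and \cite{KZ1}), the two evident monoidal functors $\FZ(\CC) \to \Fun_{\CC|\CD}(\CM, \CM) \leftarrow \FZ(\CD)$ are then monoidal equivalences, and their composite $F : \FZ(\CC) \xrightarrow{\simeq} \FZ(\CD)$ is a braided monoidal equivalence.

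Next I would build the coherence isomorphism $\delta : F \circ \phi_\CC \simeq \phi_\CD$ directly from the $\CE$-bimodule data on $\CM$. Under the first equivalence, $\phi_\CC(e) \in \FZ(\CC)$ is sent to the bimodule endofunctor $\phi_\CC(e) \odot - \in \Fun_{\CC|\CD}(\CM, \CM)$, and under the second, $\phi_\CD(e) \in \FZ(\CD)$ is sent to $- \odot \phi_\CD(e)$. By Remark \ref{rem:bim_E}, the $\CE$-bimodule structure on $\CM$ supplies a natural isomorphism $\eta_{\CM, e} : \phi_\CC(e) \odot - \xrightarrow{\sim} - \odot \phi_\CD(e)$ of $\CC$-$\CD$-bimodule functors, that is, an isomorphism in $\Fun_{\CC|\CD}(\CM, \CM)$. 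Transporting $\eta_{\CM, e}$ back along the equivalences then yields a natural isomorphism $\delta_e : F(\phi_\CC(e)) \xrightarrow{\sim} \phi_\CD(e)$ in $\FZ(\CD)$.

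It then remains to verify that $\delta$ is a braided monoidal isomorphism between the braided monoidal functors $F \circ \phi_\CC$ and $\phi_\CD$. Naturality in $e$ is immediate from the naturality of $\eta_{\CM, -}$; the monoidal axiom and the braided axiom reduce to the coherence diagrams that witness $\{\eta_{\CM, e}\}_{e \in \CE}$ as the extra data promoting $\CM$ from a $\CC \boxtimes \CD^\rev$-module to a $\CC \boxtimes_\CE \CD^\rev$-module, together with the symmetry of $\beta$ on $\CE$. The main obstacle I expect is showing that $\eta_{\CM, e}$, which a priori is only a morphism in $\Fun_{\CC|\CD}(\CM, \CM)$, in fact intertwines the half-braidings that lift $\phi_\CC(e) \odot -$ and $- \odot \phi_\CD(e)$ to objects of $\FZ(\Fun_{\CC|\CD}(\CM, \CM))$. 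Since these half-braidings are built out of the left-$\CC$-module and right-$\CD$-module structures $S^{F|L}$ and $S^{F|R}$ of an arbitrary bimodule endofunctor $F$, the required intertwining amounts to a commutative square that is precisely the bimodule-functor condition on $\eta_{\CM, e}$; once this is cleanly unpacked, naturality, monoidality and braidedness of $\delta$ all follow from the $\CE$-module coherence axioms.
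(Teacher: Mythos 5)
Your proposal is correct and follows essentially the same route as the paper: reduce the equivalence statements to the non-enriched case from \cite{KZ1}, then transport the $\CC$-$\CD$-bimodule-functor isomorphism $\eta_{\CM,e}:\phi_\CC(e)\odot - \to -\odot\phi_\CD(e)$ along $L$ and $R$ to obtain $\delta$, with monoidality and braidedness following because $\eta_\CM$ is an isomorphism of bimodule functors. The paper additionally spells out the braiding-preservation of $R^{-1}\circ L$ via an explicit commutative diagram, but this is only a difference in how much of the non-enriched argument is reproduced.
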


	The usual case ($\CE=\bk$) of this proposition was proved in various texts with different assumptions (see for example \cite{S,Mu1,ENO3}). We sketch this part and then check the equivalence is indeed a braided monoidal equivalence respecting $\CE$.
	
\begin{proof}[Proof of Proposition \ref{prop:ZC-ZD-bmeE}]
	Let $_\CD \CN_\CC$ be an inverse of $\CM$. The evident functor $\CC \to \Fun_{\CD^\rev}(\CM,\CM)$ is a monoidal equivalence with its quasi-inverse given by the composed equivalence $\Fun_{\CD^\rev}(\CM,\CM) \simeq \Fun_{\CC^\rev}(\CM \boxtimes_\CD \CN,\CM \boxtimes_\CD \CN) \simeq \Fun_{\CC^\rev}(\CC,\CC) \simeq \CC $. It follows immediately that the evident monoidal functor (denoted by $L$) $$\FZ(\CC) \xrightarrow{L} \Fun_{\CC|\CD}(\CM,\CM), \ c \mapsto c \odot -$$ is an equivalence. Similarly, the evident monoidal functor (denoted by $R$) $$\FZ(\CD) \xrightarrow{R} \Fun_{\CC|\CD}(\CM,\CM), \ d \mapsto - \odot d$$ is an equivalence as well.
	
	Suppose the induced equivalence $\FZ(\CC) \simeq \FZ(\CD)$ carries $c,c'$ to $d,d'$, respectively. We have the following commutative diagram for $x \in \CM$:
	$$
	\xymatrix@C=48pt{
		c \odot (c' \odot x) \ar[r]^{\sim} \ar[d]_{\beta_{c,c'} \odot \Id_x} & (c' \odot x) \odot d \ar[r]^{\sim} \ar[d]^{\sim} & (x \odot d') \odot d \ar[d]^{\Id_x \odot \beta_{d',d} } \\
		c' \odot (c \odot x) \ar[r]^{\sim} & c' \odot (x \odot d) \ar[r]^{\sim} & (x \odot d) \odot d' \\
	}
	$$
	The commutativity of the left square is due to the fact that the isomorphism $c \odot - \simeq - \odot d$ is of left $\CC$-module functors; 
	that of the right square is due to the fact that $c' \odot - \simeq - \odot d'$ is of right $\CD$-module functors. Then the commutativity of the outer square says that the induced equivalence $\FZ(\CC) \simeq \FZ(\CD)$ preserves braidings.
	
	Recall that we have a natural isomorphism $\eta_\CM$ (see Remark \ref{rem:bim_E}) between the composed functors
	$$\CE \xrightarrow{\phi_\CC} \FZ(\CC) \xrightarrow{L} \Fun_{\CC|\CD}(\CM,\CM), \ e \mapsto \phi_\CC(e) \odot -,$$
	and 
	$$\CE \xrightarrow{\phi_\CD} \FZ(\CD) \xrightarrow{R} \Fun_{\CC|\CD}(\CM,\CM), \ e \mapsto - \odot \phi_\CD(e).$$
	This induced a natural isomorphism $\delta:(R^{-1} \circ L) \circ \phi_\CC \simeq \phi_\CD$. And $\delta$ preserves the monoidal structure and braidings because $\eta_\CM$ is a natural isomorphism between two $\CC$-$\CD$-bimodule functors.

\end{proof}	

\begin{defn} \label{def:mbimcE}
	Let $\CC,\CD$ be finite braided monoidal categories containing $\CE$. A \textit{monoidal $\CC$-$\CD$-bimodule containing $\CE$} is a finite monoidal category $\CM$ equipped with a right exact $k$-linear braided monoidal functor $$\psi_\CM:\bar\CC\boxtimes\CD \to \FZ(\CM)$$ 
	as well as an isomorphism $\eta_\CM$ between the monoidal functors 
	$$\psi_\CM^-: \CE \xrightarrow{\overline{\phi_\CC}\boxtimes\one_\CD} \bar\CC\boxtimes\CD \xrightarrow{\psi_\CM}\FZ(\CM) \to \CM,$$
	and
	$$\psi_\CM^+: \CE \xrightarrow{\one_{\bar\CC}\boxtimes\phi_\CD} \bar\CC\boxtimes\CD \xrightarrow{\psi_\CM}\FZ(\CM) \to \CM.$$

	We say that $\CM$ is \textit{closed} if $\psi_\CM$ is an equivalence.

	We say that two monoidal $\CC$-$\CD$-bimodules $_\CC\CM_\CD,_\CC\CN_\CD$ containing $\CE$ are  \textit{equivalent} if there is a $k$-linear monoidal equivalence $F:\CM\to\CN$ as well as an isomorphism $\theta$ between braided monoidal functors $\FZ(F) \circ \psi_\CM: \bar\CC\boxtimes\CD \xrightarrow{\psi_\CM} \FZ(\CM)\xrightarrow[\simeq]{\FZ(F)} \FZ(\CN)$ and $\psi_\CN$ such that the following diagram commutes for $e\in\CE$:
	$$
	\xymatrix@C=48pt{
		F(\psi_\CM^-(e)) \ar[r]^-{F(\eta_{\CM,e})} \ar[d]_\theta & F(\psi_\CM^+(e)) \ar[d]^\theta \\
		\psi_\CN^-(e) \ar[r]^-{\eta_{\CN,e}} & \psi_\CN^+(e). \\
	}
	$$
\end{defn}

\begin{rem} 
	Let $\CB,\CC,\CD$ be finite braided monoidal categories containing $\CE$, and let $_\CB \CM_\CC, _\CC \CN_\CD$ be monoidal bimodules containing $\CE$. Then $\CM \boxtimes_\CC \CN$ is a monoidal $\CB$-$\CD$-bimodule containing $\CE$ as well.
\end{rem}

	We give some examples of monoidal bimodules containing $\CE$:
\begin{exam} \label{exam:ZC-ZD-bimcE}
	Let $\CC,\CD$ be finite monoidal categories over $\CE$ and $_\CC\CM_\CD$ an invertible $\CC$-$\CD$-bimodule over $\CE$. By Proposition \ref{prop:ZC-ZD-bmeE}, we have  monoidal equivalences $\FZ(\CC) \xrightarrow{L} \Fun_{\CC|\CD}(\CM,\CM) \xleftarrow{R} \FZ(\CD)$ and a braided monoidal equivalence $R^{-1} \circ L : \FZ(\CC) \to \FZ(\CD)$ respecting $\CE$. In this case, we have 
\begin{enumerate}
	\item the monoidal functors $\FZ(\CC) \xrightarrow{L} \Fun_{\CC|\CD}(\CM,\CM) \xleftarrow{R} \FZ(\CD)$ define a structure of a monoidal $\FZ(\CC)$-$\FZ(\CD)$-bimodule containing $\CE$ on $\Fun_{\CC|\CD}(\CM,\CM)$;
	\item the monoidal functors $\FZ(\CC) \xrightarrow{R^{-1} \circ L} \FZ(\CD) \xleftarrow{\Id} \FZ(\CD)$ define a structure of a monoidal $\FZ(\CC)$-$\FZ(\CD)$-bimodule containing $\CE$ on $\FZ(\CD)$.
\end{enumerate}
	Then, the monoidal equivalence $R: \FZ(\CD) \simeq \Fun_{\CC|\CD}(\CM,\CM)$ defines a monoidal $\FZ(\CC)$-$\FZ(\CD)$-bimodule equivalence containing $\CE$.
\end{exam}

	Actually, even if $\CM$ is not invertible, there is a monoidal $\FZ(\CC)$-$\FZ(\CD)$-bimodule containing $\CE$ structure on $\Fun_{\CC|\CD}(\CM,\CM)$:
\begin{exam} \label{exam:fun_CDcE}
	Let $\CC,\CD$ be finite monoidal categories over $\CE$, and $_\CC\CM_\CD$ a finite $\CC$-$\CD$-bimodule over $\CE$. Then $\CW:=\Fun_{\CC|\CD}(\CM,\CM)$ is a monoidal $\FZ(\CC)$-$\FZ(\CD)$-bimodule containing $\CE$. More precisely, we have a braided monoidal functor 
	$$
	\psi_\CW: \overline{\FZ(\CC)} \boxtimes \FZ(\CD) \simeq \overline{\FZ(\CC)}\boxtimes\overline{\FZ(\CD^\rev)} \to \FZ(\CW)
	$$
	such that $\psi_\CW(c\boxtimes d) = c \odot - \odot d$ as objects of $\CW$, equipped with the evident half-braiding $c \odot F(-) \odot d \simeq F(c \odot - \odot d)$, where $c \in \overline{\FZ(\CC)}$, $d \in \overline{\FZ(\CD^\rev)}$, $F\in\CW$ (the same as \ref{exam:fun_CD}). The natural isomorphism $\eta_{\CW,e}:\phi_\CC(e) \odot - \odot \one_\CD \to \one_\CC \odot - \odot \phi_\CD(e)$ is induced by $\phi_\CC(e) \boxtimes_\CE \one_\CD \simeq \one_\CC \boxtimes_\CE \phi_\CD(e)$ for $e \in \CE$. 
\end{exam}	
	
\begin{defn}
	Assume $k$ is an algebraically closed field and $\CE$ is a symmetric fusion category over $k$. Let $\CC,\CD$ be braided multi-fusion categories containing $\CE$ and $\CM$ a monoidal $\CC$-$\CD$-bimodule containing $\CE$. We say that $\CM$ is a \textit{multi-fusion $\CC$-$\CD$-bimodule containing $\CE$} if $\CM$ is also a multi-fusion category.
\end{defn}

\subsection{$\CE$-module braidings and braided $\CE$-modules}
	In this subsection, inspired by a sort of common but vital observables---full braidings in physics, we fix our eyes on the following structure on $\CE$-modules:
\begin{defn} \label{def:br}
	Let $\CC$ be an $\CE$-module. An \textit{$\CE$-module braiding} on $\CC$ is a natural isomorphism $\tau_{e,x}: e\odot x \to e\odot x$ for $e \in \CE$, $x \in \CC$ such that $\tau_{\one,x} = \Id_{\one \odot x}$ and the following diagram
	\begin{equation} \label{diag:E-br}
	\xymatrix@C=48pt{
		e \odot e' \odot x \ar[r]^{\Id_e \odot \tau_{e',x}} \ar[rd]_{\tau_{e \otimes e',x}} & e \odot e' \odot x \ar[r]^{\beta_{e,e'} \odot \Id_x} \ar[d]^{\tau_{e,e' \odot x}} & e' \odot e \odot x \ar[d]^{\Id_{e'} \odot \tau_{e,x}} \\
		& e \odot e' \odot x \ar[r]^{\beta_{e,e'} \odot \Id_x} & e' \odot e \odot x. \\
	}
	\end{equation} 
	commutes for all $e,e' \in \CE$ and $x \in \CC$. 
	A \textit{braided} $\CE$-module is an $\CE$-module with an $\CE$-module braiding.
	
	We say an $\CE$-module braiding $\tau$ is \textit{trivial}, if $\tau_{e,x} = \Id_{e \odot x}$ for all $e \in \CE$ and $x \in \CC$.  
\end{defn}
	
	In \cite[Section 5.1]{Bro}, a notion also named braided module categories, equips a structure similar to Definition \ref{def:br} but over an ordinary  braided monoidal category. And the theory of braided module categories are further developed in \cite{DN2,J-FR}.

	Since this structure is vital throughout our work, we would like to give several examples:

\begin{exam} \label{exam:fun_CD-br1}
	Let $\CC,\CD$ be finite monoidal categories over $\CE$, and $_\CC\CM_\CD,_\CC\CN_\CD$ finite $\CC$-$\CD$-bimodules over $\CE$. Then $\CW := \Fun_{\CC|\CD}(\CM,\CN)$ is a braided $\CE$-module, where we choose the $\CE$-action as the one induced by the left $\CC$-action: 
	$$\odot^\CE: \CE \times \CW \to \CW, \ (e,f) \mapsto \phi_\CC(e) \odot f,$$ 
	and the $\CE$-module braiding is given as follows:
	$$
	\begin{aligned}
	\tau_{e,f}:
	\phi_\CC(e) \odot f(-)
	&\xrightarrow{\eta_{\CN,e} \odot \Id_f} f(-) \odot \phi_\CD(e)
	\xrightarrow{S_{\phi_\CD(e),-}^{f|R}} f(- \odot \phi_\CD(e)) \\
	&\xrightarrow{\Id_f \odot \eta_{\CM,e}^{-1}} f(\phi_\CC(e) \odot -) 
	\xrightarrow{(S_{\phi_\CC(e),-}^{f|L})^{-1}} \phi_\CC(e) \odot f(-), \\
	\end{aligned}
	$$
	for $e \in \CE$ and $(f,S_{-,-}^{f|L},S_{-,-}^{f|R}) \in \Fun_{\CC|\CD}(\CM,\CN)$. 
	Here, $\eta_\CM,\eta_\CN$ are the natural transformations equipped by finite $\CC$-$\CD$-bimodules $\CM,\CN$ over $\CE$ (see Remark \ref{rem:bim_E}). 
	
\end{exam}

\begin{exam} \label{exam:bmccE_br}
	Let $\CC$ be a braided monoidal category containing $\CE$. Then $\CC$ is a braided $\CE$-module, where the $\CE$-action is defined as $e \odot x := \phi_\CC(e) \otimes x$ and the $\CE$-module braiding $\tau_{e,x}$ is chosen as the double braiding between $e$ and $x$ in $\CC$, i.e. $\tau_{e,x}:=c_{x,\phi_\CC(e)} \circ c_{\phi_\CC(e),x}$. 
	In particular, a braided monoidal category over $\CE$ has a trivial $\CE$-module braiding.
\end{exam}
	
	we provide a view to see why the double braiding in \ref{exam:bmccE_br} satisfies the commutative diagram \ref{diag:E-br}:
\begin{enumerate}
	\item The commutativity of the left triangle means that the isomorphism that two object $e,e' \in \CE$ loop around $x \in \CC$ in turn is equivalent to the one that $e$ and $e'$ loop around $x$ together;
	\item The commutativity of the right square means that the isomorphism that $e$ loops around $e'$ and $x$ is equivalent to the one that $e$ loops around $x$ only (note that the double braiding between $e$ and $e'$ is trivial).
\end{enumerate}
	This is also the reason why we named the structure in Definition \ref{def:br} by an $\CE$-module braiding.

\begin{exam} \label{exam:mbimcE_br}
	Let $\CC,\CD$ be finite braided monoidal categories containing $\CE$, and $\CM$ a monoidal $\CC$-$\CD$-bimodule containing $\CE$. Then, $\CM$ is a finite braided $\CE$-module, where we choose the $\CE$-action as 
	$$\odot^\CE: \CE \times \CM \to \CM, \ (e,x) \mapsto \psi_\CM^-(e) \otimes x,$$ 
	and the $\CE$-module braiding is
	$$
	\begin{aligned}
	\tau_{e,x}: \psi_\CM^-(e)\otimes x
	&\xrightarrow{\eta_{\CM,e}\otimes\Id_x} \psi_\CM^+(e)\otimes x
	\xrightarrow{c_{\psi_\CM^+(e),x}} x\otimes\psi_\CM^+(e) \\
	&\xrightarrow{\Id_x\otimes\eta_{\CM,e}^{-1}} x\otimes\psi_\CM^-(e) 
	\xrightarrow{(c_{\psi_\CM^-(e),x})^{-1}} \psi_\CM^-(e)\otimes x, \\
	\end{aligned}
	$$
	where $c$ is the half-braiding in $\FZ(\CM)$.	
	
	Note that we can see $\CC$ as a monoidal $\CC$-$\CC$-bimodule containing $\CE$, where $\eta_\CC=\Id$.
	Then, the braiding $\tau_{e,x}$ we introduce here is exactly the double braiding between $e$ and $x$, which coincides with Example \ref{exam:bmccE_br}. That is why we choose this isomorphism (rather than its inverse) as the $\CE$-module braiding on $\CM$.
\end{exam}

\begin{exam} \label{exam:fun_CD-br2}
	Let $\CC,\CD$ be finite monoidal categories over $\CE$, and $_\CC\CM_\CD$ a finite $\CC$-$\CD$-bimodule over $\CE$. We have $\CW:=\Fun_{\CC|\CD}(\CM,\CM)$ is a monoidal $\FZ(\CC)$-$\FZ(\CD)$-bimodule containing $\CE$ (see Example \ref{exam:fun_CDcE}). Then by Example \ref{exam:mbimcE_br}, there is an $\CE$-module braiding on $\CW$.
	Note that this $\CE$-module braiding coincides with that mentioned in Example \ref{exam:fun_CD-br1} when $\CN =\CM$.
\end{exam}

\begin{exam} \label{exam:ModA-br}
	Let $\CC$ be a finite braided $\CE$-module with an $\CE$-module braiding $\tau$. Assume that $\CC=\RMod_A(\CE)$ where $A$ is an algebra in $\CE$. We claim that $\tau$ is determined by $\tau_{e,A}^\CC$ for all $e \in \CE$.
	
	For an object $x \in \RMod_A(\CE)$, i.e. a right $A$-module, $x$ must be a quotient of some free right $A$-module $e' \otimes A$ (actually, $x \simeq x \otimes_A A$ is a quotient of $x \otimes A$). 
	By the naturality of $\tau$, we know that the $\CE$-module braiding on $\CC$ is determined by those braidings between objects in $\CE$ and free modules in $\CC$. Moreover, these braidings are determined by $\{\tau_{e,A}\}_{e \in \CE}$.
	In fact, we have the following commutative diagram for $e \in \CE$:
	$$
	\xymatrix@C=48pt{
		e' \otimes e \otimes A \ar[r]^-{\beta_{e',e} \otimes \Id_A} \ar[d]^{\Id_{e'} \otimes \tau_{e,A}} & e \otimes e' \otimes A \ar[d]^{\tau_{e, e' \otimes A}} \ar@{->>}[r] & e \otimes x \ar[d]^{\tau_{e,x}} \\
		e' \otimes e \otimes A \ar[r]^-{\beta_{e',e} \otimes \Id_A} & e \otimes e' \otimes A \ar@{->>}[r] & e \otimes x . \\
	}
	$$
	Thus, we only need to study on $\tau^\CC_{e,A}$.
	
	Also recall that $x$ has a structure of a left $A$-module as well. 
	For a free right $A$-module $e \otimes A$, we can define its left $A$-action as the composed morphism $A \otimes e \otimes A \xrightarrow{\beta_{A,e} \otimes \Id_A} e \otimes A \otimes A \xrightarrow{\Id_e \otimes m_A} e \otimes A$, where $\beta$ denotes the half-braiding in $\CE$ (recall that $A$ is also an object in $\CE$) and $m_A$ is the multiplication of the algebra $A$.
	Thus, we can safely use the formula
	$$\tau^\CC_{e,x}=\tau^\CC_{e,A} \otimes_A \Id_x, \quad e \in \CE, x \in \CC.$$

\end{exam}

\begin{exam} \label{exam:Cop-br}
	Let $\CC$ be a finite braided $\CE$-module. Then, on $\CC^\op$ should be an $\CE$-module braiding as well. How to define it? 
	
	Suppose that $\CC=\RMod_A(\CE)$ where $A$ is an algebra in $\CE$. 
	
	Consider the definition of an $\CE$-module braiding, and we have the following commutative diagram for all $e \in \CE$:
	$$
	\xymatrix@C=48pt{
		A \otimes e \otimes A \ar[r]^-{\beta_{A,e} \otimes \Id_A} \ar[d]^{\Id_A \otimes \tau_{e,A}} & e \otimes A \otimes A \ar[d]^{\tau_{e,A \otimes A}} \ar[r]^-{\Id_e \otimes m_A} & e \otimes A \ar[d]^{\tau_{e,A}} \\
		A \otimes e \otimes A \ar[r]^-{\beta_{A,e} \otimes \Id_A} & e \otimes A \otimes A \ar[r]^-{\Id_e \otimes m_A} & e \otimes A . \\
	}
	$$
	The commutativity of the outer square shows that the right-$A$-module isomorphism $\tau_{e,A}: e \otimes A \to e \otimes A$ is also a left-$A$-module isomorphism, which leads $\tau_{e, x}$ is an $\CE$-module braiding on $\LMod_A(\CE)$ as well.
	Then, we obtain an $\CE$-module braiding on $\LMod_A(\CE)$:
	$$\tau^{\LMod_A(\CE)}_{e,x} = \tau^{\CC}_{e,x}, \quad e \in \CE, x \in \CC.$$
	
	Now, identify $\CC^{\op|L}$ with $\LMod_A(\CE)$ via the $\CE$-module equivalence $\delta^L: \LMod_A(\CE) \simeq \RMod_A(\CE)^{\op|L}$ (recall that the $\CE$-action on $\CC^{\op|L}$: $e \odot^L x := x \otimes e^L$), which induces the $\CE$-module braiding on $\CC^{\op|L}$:
	$$\tau^{\CC^\op}_{e,x}=(\tau^{\LMod_A(\CE)}_{e,x^R})^L=(\tau^\CC_{e,x^R})^L, \quad e \in \CE, x \in \CC.$$ 
	
	We remind readers that we take the right dual of $x$ via the rigidity of $\CE$ and this process involves a choice of the algebra $A$. However, by Corollary \ref{cor:op-br} and Remark \ref{rem:preseve-br}, we know that on $\CC^\op$ the $\CE$-module braiding defined in this way is indeed independent of the choice of $A$.
\end{exam}

\begin{defn} \label{def:Fun'}	
	Given braided $\CE$-modules $\CC,\CD$ with $\CE$-module braidings $\tau^\CC$ and $\tau^\CD$ respectively, we say that an $\CE$-module functor $F:\CC\to\CD$ \textit{preserves the $\CE$-module braiding} or is \textit{a braided $\CE$-module functor} if the following diagram
	\begin{equation} \label{fun'} 
	\xymatrix@C=48pt{
		F(e\odot x) \ar[r]^{S^F_{e,x}} \ar[d]_{F(\tau^\CC_{e,x})} & e\odot F(x) \ar[d]^{\tau^\CD_{e,F(x)}} \\
		F(e\odot x) \ar[r]^{S^F_{e,x}} & e\odot F(x) \\
	}		
	\end{equation}
	commutes for $e\in\CE$, $x\in\CC$.
	We use $\Fun_\CE'(\CC,\CD)$ to denote the full subcategory of $\Fun_\CE(\CC,\CD)$ formed by the braided $\CE$-module functors.
\end{defn}

	Just like the analysis in \ref{exam:ModA-br}, when assuming $\CC=\RMod_A(\CE)$ for an algebra $A$ in $\CE$, we only need to check if the diagram \eqref{fun'} commutes for $x=A$ to judge if $F \in \Fun'(\CC,\CD)$.

\begin{rem}
	Let $\CC,\CD$ be finite braided $\CE$-modules with $\CE$-module braidings $\tau^\CC$ and $\tau^\CD$ respectively. Then the tensor product bifunctor $\boxtimes_\CE$ carries these two $\CE$-module braidings to two ones on $\CC \boxtimes_\CE \CD$. Since it brings no confusion, we denote them by $\tau^\CC$ and $\tau^\CD$ as well. 
\end{rem}

\subsection{The relative tensor product of braided $\CE$-modules}
	After this preparatory work, we introduce the notion of the relative tensor product over $\CE$:

\begin{defn} \label{def:rtp}
	Let $\CC,\CD$ be finite braided $\CE$-modules with $\CE$-module braidings $\tau^\CC$ and $\tau^\CD$ respectively. The \textit{relative tensor product} $\CC \boxdot_\CE \CD$ \textit{over $\CE$} is defined to be the full subcategory of $\CC \boxtimes_\CE \CD$ formed by the objects $x$ such that $\tau^\CC_{e,x} = \tau^\CD_{e,x}$ for all $e \in \CE$.
\end{defn}

	If $\CC,\CD$ are finite $\CE$-modules with trivial $\CE$-module braidings (for example, finite braided categories over $\CE$), then $\CC \boxdot_\CE \CD = \CC \boxtimes_\CE \CD$. Another example is the special case $\CE=\bk$, in which the relative tensor product is nothing but the Deligne's tensor product.
	
	Next, we study some basic properties of the relative tensor product over $\CE$.
	
\begin{rem}
	It follows from the definition of $\CE$-module braidings that, as the full subcategory of $\CC \boxtimes_\CE \CD$, $\CC \boxdot_\CE \CD$ is stable under the $\CE$-action, which means it is a finite braided $\CE$-module. Moreover, $\CC \boxdot_\CE \CD$ is closed under taking subquotients or direct sums. 
\end{rem}

\begin{lem} \label{lem:max-quotient}
	Let $\CC,\CD$ be finite braided $\CE$-modules. Any object in $\CC \boxtimes_\CE \CD$ has a unique maximal quotient in $\CC \boxdot_\CE \CD$ up to an isomorphism, where the maximality means that the epimorphism factor through no other quotient in $\CC \boxdot_\CE \CD$.
\end{lem}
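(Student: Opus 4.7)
The plan is to show that $\CC\boxdot_\CE\CD$ is a full subcategory of $\CC\boxtimes_\CE\CD$ closed under subobjects, quotients, and finite direct sums, and then to extract the maximal quotient of a given $x$ by an intersection-of-kernels argument using the finite length of objects in $\CC\boxtimes_\CE\CD$.

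First I would verify the three closure properties. Closure under quotients is a direct naturality argument: for an epi $q\colon x\twoheadrightarrow y$ with $x\in\CC\boxdot_\CE\CD$, naturality of $\tau^\CC$ and $\tau^\CD$ combined with the hypothesis $\tau^\CC_{e,x}=\tau^\CD_{e,x}$ yields
\[
\tau^\CC_{e,y}\circ(e\odot q) = (e\odot q)\circ\tau^\CC_{e,x} = (e\odot q)\circ\tau^\CD_{e,x} = \tau^\CD_{e,y}\circ(e\odot q),
\]
and $e\odot q$ is epi because $e\odot-$ is right exact, so $\tau^\CC_{e,y}=\tau^\CD_{e,y}$. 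For closure under subobjects, the same naturality computation with a mono $i\colon y\hookrightarrow x$ lets me left-cancel $e\odot i$; this cancellation is valid because the rigidity of $\CE$ makes $e\odot-$ exact (it admits the biadjoints $e^L\odot-$ and $e^R\odot-$), so $e\odot i$ is monic. Closure under finite direct sums is immediate from the defining equation being checked componentwise.

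Next, fix $x\in\CC\boxtimes_\CE\CD$ and consider the family $\mathcal{K}=\{K\subseteq x : x/K\in\CC\boxdot_\CE\CD\}$, which is nonempty since it contains $K=x$. If $K_1,K_2\in\mathcal{K}$, the canonical mono $x/(K_1\cap K_2)\hookrightarrow x/K_1\oplus x/K_2$ together with closure under finite direct sums and subobjects forces $x/(K_1\cap K_2)\in\CC\boxdot_\CE\CD$, so $\mathcal{K}$ is closed under finite intersections. Since $\CC\boxtimes_\CE\CD$ is a finite abelian category, $x$ has finite length and the descending chain condition holds on its subobjects; combined with closure under finite intersections, this forces a least element $K_0\in\mathcal{K}$, realized by a finite intersection of members of $\mathcal{K}$. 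I set $\bar x:=x/K_0\in\CC\boxdot_\CE\CD$.

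Finally, the universal property is immediate: any epi $q'\colon x\twoheadrightarrow y$ with $y\in\CC\boxdot_\CE\CD$ has $\ker q'\in\mathcal{K}$, hence $\ker q'\supseteq K_0$, so $q'$ factors through $x\twoheadrightarrow\bar x$. This gives the stated maximality, and uniqueness up to isomorphism follows formally from the resulting universal property. I expect the main obstacle to be the closure under subobjects, whose proof hinges on the exactness of $e\odot-$ on $\CC\boxtimes_\CE\CD$; without invoking the rigidity of $\CE$ to provide both adjoints of $e\odot-$ and thereby force $e\odot i$ to remain monic, the naturality cancellation breaks down and this approach would need a different input.
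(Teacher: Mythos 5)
Your proof is correct, and it is organized differently from the paper's. The paper takes the \emph{existence} of a maximal quotient for granted (it follows from finiteness together with the remark, stated just before the lemma, that $\CC\boxdot_\CE\CD$ is closed under subquotients and direct sums) and only proves \emph{uniqueness}: given two maximal quotients $y,y'$ of $x$, it forms the pushout $y''=y\amalg_x y'$ and the pullback $y'''=y\times_{y''}y'$, observes that $x\twoheadrightarrow y'''$ and $y'''\hookrightarrow y\oplus y'$ (so $y'''\in\CC\boxdot_\CE\CD$), and concludes $y\simeq y'''\simeq y'$ by maximality. You instead construct the \emph{greatest} quotient directly, as $x/K_0$ for the least element $K_0$ of the family of kernels, obtained from the descending chain condition and closure under finite intersections. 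The key mechanism is the same in both arguments --- the paper's $y'''$ is exactly your $x/(K_1\cap K_2)$, detected inside $x/K_1\oplus x/K_2$ via closure under subobjects and finite direct sums --- but your version buys more: it yields existence, uniqueness, \emph{and} the universal property that every quotient of $x$ in $\CC\boxdot_\CE\CD$ factors through the maximal one, which is precisely what the paper needs (and uses without further comment) in the subsequent proof of Proposition \ref{prop:rtp-finite}. You also supply proofs of the closure properties that the paper only asserts; your identification of the one nontrivial point --- that closure under subobjects requires $e\odot-$ to be exact, which follows from the rigidity of $\CE$ providing the biadjoints $e^L\odot-$ and $e^R\odot-$ --- is exactly right.
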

	
\begin{proof}
	Assume that $x\in \CC \boxtimes_\CE \CD$ and that $y,y' \in \CC \boxdot_\CE \CD$ are both maximal quotient objects of $x$. Then we consider the pushout $y'' = y \amalg_x y'$ and the pullback $y''' = y \times_{y''} y'$. 
	Note that both epimorphisms $x \twoheadrightarrow y$ and $x \twoheadrightarrow y'$ factor through $y'''$.
		
	Next, we show that $y'''$ is also a quotient of $x$ in $\CC \boxdot_\CE \CD$.
	By the universal property, the map $x \to y'''$ is also an epimorphism. Also note that $y''' \hookrightarrow y \oplus y'$, which implies $y''' \in \CC \boxdot_\CE \CD$. 
	
	By the maximality of $y$ and $y'$, we get $y \simeq y''' \simeq y'$, which leads to the conclusion. 	
\end{proof}

\begin{prop} \label{prop:rtp-finite}
	Let $\CC,\CD$ be finite braided $\CE$-modules. Then $\CC\boxdot_\CE\CD$ is finite.
\end{prop}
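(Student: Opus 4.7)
The plan is to realize $\CC\boxdot_\CE\CD$ as a reflective abelian subcategory of the finite category $\CC\boxtimes_\CE\CD$, transport a projective generator through the reflector, and conclude via the reconstruction of a $k$-linear abelian category with a projective generator of finite-dimensional endomorphism algebra as a module category. First I would verify that $\CC\boxdot_\CE\CD$ is an abelian subcategory and that the inclusion $i:\CC\boxdot_\CE\CD\hookrightarrow\CC\boxtimes_\CE\CD$ is exact. Closure under subobjects, quotients, and finite direct sums follows from the naturality of $\tau^\CC$ and $\tau^\CD$: if the two module braidings agree on an object $x$, then chasing naturality squares along a monomorphism into $x$ or an epimorphism out of $x$ shows they also agree on the corresponding sub- or quotient-object, while the direct-sum case is immediate. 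Hence kernels and cokernels in $\CC\boxdot_\CE\CD$ coincide with those in the ambient category, and $i$ is exact.

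Next, using Lemma \ref{lem:max-quotient} I would define a reflector $Q:\CC\boxtimes_\CE\CD\to\CC\boxdot_\CE\CD$ sending $x$ to its maximal quotient $Q(x)$ in $\CC\boxdot_\CE\CD$. For the universal property, given any $f:x\to y$ with $y\in\CC\boxdot_\CE\CD$, the image $\Img(f)\subseteq y$ lies in $\CC\boxdot_\CE\CD$ (closure under subobjects) and is a quotient of $x$ there; the pushout--pullback argument from the proof of Lemma \ref{lem:max-quotient} shows that any such quotient is dominated by $Q(x)$, so $f$ factors through $x\twoheadrightarrow Q(x)$. This gives an adjunction $Q\dashv i$, and in particular functoriality of $Q$.

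Finally I would transport a projective generator. Pick a projective generator $P$ of the finite category $\CC\boxtimes_\CE\CD$. I claim $Q(P)$ is a projective generator of $\CC\boxdot_\CE\CD$: projectivity is immediate from the adjunction and exactness of $i$, since
\[
\Hom_{\CC\boxdot_\CE\CD}(Q(P),-)\simeq\Hom_{\CC\boxtimes_\CE\CD}(P,i(-))
\]
is an exact functor on $\CC\boxdot_\CE\CD$; and $Q(P)$ is a generator because $Q$, as a left adjoint, preserves epimorphisms and direct sums, so any epi $P^{\oplus n}\twoheadrightarrow i(x)$ in the ambient yields an epi $Q(P)^{\oplus n}\simeq Q(P^{\oplus n})\twoheadrightarrow x$ in $\CC\boxdot_\CE\CD$. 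The endomorphism algebra $B:=\End(Q(P))\simeq\Hom_{\CC\boxtimes_\CE\CD}(P,Q(P))$ is finite-dimensional as a morphism space in a finite category, so standard reconstruction gives $\CC\boxdot_\CE\CD\simeq\RMod_B(\bk)$, proving finiteness. The main obstacle I anticipate is making $Q$ coherent and functorial on morphisms: since $Q(x)$ is only determined up to isomorphism, one has to lean on the pushout--pullback construction of Lemma \ref{lem:max-quotient} to produce canonical factorizations; everything else is routine reflective-subcategory bookkeeping.
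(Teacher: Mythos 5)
Your proposal is correct and rests on the same mechanism as the paper's proof: Lemma \ref{lem:max-quotient} plus the isomorphism $\Hom_{\CC\boxtimes_\CE\CD}(p,i(-))\simeq\Hom_{\CC\boxdot_\CE\CD}(Q(p),-)$ obtained from the image factorization, which is exactly how the paper shows the maximal quotient of an ambient projective is projective in $\CC\boxdot_\CE\CD$. The only difference is packaging — the paper targets the intrinsic characterization of finiteness by producing the projective cover of each simple object, whereas you phrase the adjunction globally as a reflector and transport a projective generator to invoke the module-category reconstruction; both are sound.
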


\begin{proof}
	Consider the intrinsic description of a finite category. The only condition not obvious is that every simple object of $\CC \boxdot_\CE \CD$ has a projective cover. 
	
	For any simple object $x$ of $\CC \boxdot_\CE \CD$, let $p$ be the projective cover of $x$ in $\CC \boxtimes_\CE \CD$. Then, by Lemma \ref{lem:max-quotient}, $p$ has a unique maximal quotient in $\CC \boxdot_\CE \CD$, denoted by $q$. We claim that $q$ is the projective cover of $x$ in $\CC \boxdot_\CE \CD$.
	
	For any $y \in \CC \boxdot_\CE \CD$ and $f \in \Hom_{\CC \boxtimes_\CE \CD}(p,y)$, $f$ has the canonical decomposition $p \stackrel{f_0}{\twoheadrightarrow} \Img(f) \hookrightarrow y$. This implies $\Img(f) \in \CC \boxdot_\CE \CD$. Then, $f_0$ must uniquely factor through $q$. This leads to $\Hom_{\CC \boxtimes_\CE \CD}(p,i(-)) \simeq \Hom_{\CC \boxdot_\CE \CD}(q,-)$, where $i:\CC \boxdot_\CE \CD \hookrightarrow \CC \boxtimes_\CE \CD$ is the inclusion functor. Since $p$ is projective in $\CC \boxtimes_\CE \CD$, i.e. the functor $\Hom_{\CC \boxtimes_\CE \CD}(p,-)$ is exact, $q$ is projective in $\CC \boxdot_\CE \CD$ as well.
	Note that $q$ has only one simple quotient, $x$. Thus, $q$ is the projective cover of $x$ in $\CC \boxdot_\CE \CD$, which completes the proof.
\end{proof}

\begin{rem}
	If $\CC,\CD$ are finite braided monoidal categories containing $\CE$, then $\CC \boxdot_\CE \CD$ is a finite braided monoidal category containing $\CE$ as well.
\end{rem}

	One may notice that our definition of the relative tensor product over $\CE$ is by a construction. Indeed it satisfies a universal property as well.

\begin{defn}
	Let $\CC,\CD,\CW$ be finite braided $\CE$-modules.
	A \textit{braiding-preserved $\CE$-bilinear bifunctor} is a $\CE$-bilinear bifunctor $F: \CC \times \CD \to \CW$ which is right exact and preserves the $\CE$-module braidings separately in each variable, i.e. the following diagram commutes for $e \in \CE, c \in \CC, d \in \CD$:
	$$
	\xymatrix@C=48pt{
		F(e \odot c,d) \ar[r]^\sim \ar[d]_{F(\tau^\CC_{e,c},d)} & e \odot F(c,d) \ar[d]^{\tau^\CW_{e,F(c,d)}} & F(c,e \odot d) \ar[l]_\sim \ar[d]^{F(c,\tau^\CD_{e,d})}\\
		F(e \odot c,d) \ar[r]^\sim & e \odot F(c,d) & F(c,e \odot d). \ar[l]_\sim  \\
	}
	$$
	
	We use $\Fun_\CE^{br}(\CC,\CD;\CW)$ to denote the category of braiding-preserved $\CE$-bilinear bifunctors $F: \CC \times \CD \to \CW$.
\end{defn}

\begin{rem}
	We construct a functor $\boxdot_\CE$ from $\CC \times \CD$ to $\CC \boxdot_\CE \CD$.
	In $\CC \boxdot_\CE \CD$, pick the maximal quotient of $c \boxtimes_\CE d \in \CC \boxtimes_\CE \CD$ as the image of $(c,d)$. One can check that $\boxdot_\CE$ is a well-defined braiding-preserved $\CE$-bilinear bifunctor. Moreover, composition with $\boxdot_\CE$ induces an equivalence $\Fun'_\CE(\CC \boxdot_\CE \CD,\CW) \simeq \Fun^{br}_\CE(\CC,\CD;\CW)$, and the universal property of the relative tensor product is illustrated in the following commutative diagram
	$$
	\xymatrix@C=48pt{
		\CC \times \CD \ar[r]^{\boxdot_\CE} \ar[dr]_{F} & \CC \boxdot_\CE \CD \ar[d]_{\exists! \underline{F}}|{\in}^{\Fun'_\CE(\CC \boxdot_\CE \CD,\CW)} \\
		& \CW  \\
	}
	$$
	for $F \in \Fun^{br}_\CE(\CC,\CD;\CW)$.
\end{rem}	

\begin{rem}
	When $\CC,\CD$ are finite braided monoidal categories containing $\CE$, the functor $\boxdot_\CE$ is still braided but not monoidal in general. (It is oplax monoidal instead.)
\end{rem}

\begin{rem} \label{rem:commutation-associative}
	Let $\CB,\CC,\CD$ be finite braided $\CE$-modules. It follows directly from the definition that the evident equivalence $\CC \boxtimes_\CE \CD \simeq \CD \boxtimes_\CE \CC$ restricts to an equivalence 
	\begin{equation}
	\CC \boxdot_\CE \CD \simeq \CD \boxdot_\CE \CC. 
	\end{equation}
	Moreover, we have
	\begin{equation}
	(\CB \boxdot_\CE \CC) \boxdot_\CE \CD \simeq \CB \boxdot_\CE (\CC \boxdot_\CE \CD),
	\end{equation}
	because both sides of the equivalence are identified with the full subcategory of $\CB \boxtimes_\CE \CC \boxtimes_\CE \CD$ formed by the objects $x$ such that $\tau^\CB_{e,x} = \tau^\CC_{e,x} = \tau^\CD_{e,x}$ for all $e \in \CE$.
\end{rem}

\begin{rem} \label{rem:centralizer-SET}
	If $\CC$ is a finite braided $\CE$-module, then $\CE \boxdot_\CE \CC$ is the full subcategory of $\CC$ consisting of the objects $x$ such that $\tau_{e,x}=\Id_{e \odot x}$ for all $e\in\CE$. In particular, when $\CC$ is a finite braided monoidal category containing $\CE$, $\CE \boxdot_\CE \CC \simeq \CE'|_\CC$ is the centralizer of $\CE$ in $\CC$. 
\end{rem}

\begin{rem} \label{rem:stacking-centralizer}
	Let $\CC,\CD$ be finite braided $\CE$-modules. Then we have
	\begin{equation} \label{eq:stacking-excitation}
	(\CE \boxdot_\CE \CC) \boxtimes_\CE (\CE \boxdot_\CE \CD) \simeq \CE \boxdot_\CE (\CC \boxdot_\CE \CD).
	\end{equation}
	Indeed, each side of the equivalence is  the full subcategory of $\CC \boxtimes_\CE \CD$ consisting of the object $x$ such that $\tau_{e,x}^\CC = \tau_{e,x}^\CD = \Id_{e \odot x}$ for all $e \in \CE$.
	In particular, if $\CC,\CD$ are finite braided monoidal categories containing $\CE$, the equivalence \ref{eq:stacking-excitation} is a braided monoidal equivalence.
\end{rem}

\begin{prop} \label{prop:rtp-unit}
	Let $\CC$ be a finite braided $\CE$-module. The functor $\FZ(\CE) \boxtimes_\CE \CC \to \CC$, $e \boxtimes_\CE x \mapsto e \odot x$ induces an equivalence 
	\begin{equation}
	\FZ(\CE) \boxdot_\CE \CC \simeq \CC.
	\end{equation}
\end{prop}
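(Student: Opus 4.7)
The strategy is to construct an explicit quasi-inverse $G\colon\CC\to\FZ(\CE)\boxdot_\CE\CC$ to the restriction $F$ of the natural functor, and verify they are mutually inverse. A quick computation shows that the most obvious candidate $x\mapsto\one_{\FZ(\CE)}\boxtimes_\CE x$ does not land in $\FZ(\CE)\boxdot_\CE\CC$ in general: the two induced $\CE$-module braidings on this object equal $\tau^{\FZ(\CE)}_{e,\one}\boxtimes_\CE\Id_x=\Id$ and $\Id_\one\boxtimes_\CE\tau^\CC_{e,x}$, which disagree whenever $\tau^\CC$ is nontrivial. A "twisted" substitute is therefore needed.

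Using Proposition \ref{prop:rec-mod}, write $\CC\simeq\RMod_A(\CE)$ for an algebra $A\in\CE$, and use Proposition \ref{prop:rec-tp} to identify $\FZ(\CE)\boxtimes_\CE\CC\simeq\RMod_A(\FZ(\CE))$, where $A$ is regarded as an algebra in $\FZ(\CE)$ via $\phi$; under this identification $z\boxtimes_\CE x\leftrightarrow z\otimes x$. For $x\in\RMod_A(\CE)$ I will define $G(x)\in\RMod_A(\FZ(\CE))$ by equipping the underlying $\CE$-object $x$ with the twisted half-braiding
\[
\tilde\gamma_x(e) := \tau^\CC_{e,x}\circ\beta_{x,e}\colon\ x\otimes e\to e\otimes x,
\]
keeping the right $A$-action inherited from $x$. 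Naturality of $\tilde\gamma_x$ in $e$ and $x$ is immediate from naturality of $\tau^\CC$ and $\beta$; the hexagon axiom for $\tilde\gamma_x$ follows by combining the two relations in Diagram \ref{diag:E-br} for $\tau^\CC$ with the hexagon and symmetry of $\beta$ in $\CE$.

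The key point is that $G(x)\in\FZ(\CE)\boxdot_\CE\CC$: the double braiding computes as
\[
\tau^{\FZ(\CE)}_{e,G(x)} = \tilde\gamma_x(e)\circ\beta_{e,x} = \tau^\CC_{e,x}\circ\beta_{x,e}\circ\beta_{e,x} = \tau^\CC_{e,x},
\]
using symmetry of $\beta$, and this coincides with the $\CC$-side braiding $\tau^\CC_{e,G(x)}$, which only sees the underlying $\RMod_A(\CE)$-datum of $G(x)$. Then $F\circ G\simeq\Id_\CC$ is immediate because $F$ forgets the half-braiding, returning $x$. Conversely, $G\circ F\simeq\Id$ follows because the defining constraint of $\FZ(\CE)\boxdot_\CE\CC$ forces any object $(y,\gamma_y)$ therein to satisfy $\gamma_y=\tilde\gamma_{\pi(y)}$, so $G(F(y))\simeq y$. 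The main obstacle is the verification of the hexagon axiom for $\tilde\gamma_x$, which requires carefully combining Diagram \ref{diag:E-br} (both the triangle and the right square) with the hexagon and symmetry of $\beta$; making rigorous the identification of $\FZ(\CE)\boxtimes_\CE\CC$ with $\RMod_A(\FZ(\CE))$ along with the concrete description of the two $\CE$-module braidings on this model is the other technical point to handle.
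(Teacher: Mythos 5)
Your proof is correct and follows essentially the same route as the paper's: both identify $\FZ(\CE)\boxtimes_\CE\CC$ with objects of $\CC$ equipped with a half-braiding against $\CE$ (you via $\RMod_A(\FZ(\CE))$, the paper via $\Fun_{\CE|\CE}(\CE,\CC)$), and both rest on the same key computation that the $\FZ(\CE)$-side module braiding is the half-braiding composed with the symmetry, so the $\boxdot_\CE$ condition pins the half-braiding down uniquely as $\tau^\CC_{e,x}\circ\beta_{x,e}$. Phrasing the argument as an explicit quasi-inverse rather than essential surjectivity plus full faithfulness is only a cosmetic difference, and your unverified hexagon check is left implicit in the paper as well.
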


	Note that $\FZ(\CE) \boxtimes_\CE \CC \simeq \Fun_{\CE|\CE}(\CE,\CE) \boxtimes_\CE \CC \simeq \Fun_{\CE|\CE}(\CE,\CE \boxtimes_\CE \CC) \simeq \Fun_{\CE|\CE}(\CE,\CC)$.
	Therefore, before we give a proof of this proposition, we study $\Fun_{\CE|\CE}(\CE,\CC)$ first.
	The category $\Fun_{\CE|\CE}(\CE,\CC)$ has a natural structure of a category over $\CE$, which can be described as follows.
	An object is a pair: 
	$$(x,\gamma_{x,-}=\{x \odot e \stackrel{\gamma_{x,e}}{\longrightarrow} e \odot x\}_{e \in \CE})$$
	where $x \in \CC$ and $\gamma_{x,-}$ is a half-braiding (i.e. a natural isomorphism in $\CC$ in the variable $e \in \CE$ and satisfying $(\Id_{e} \odot \gamma_{x,e'}) \circ (\gamma_{x,e} \odot \Id_{e'})=\gamma_{x,e \otimes e'}$ and $\gamma_{x,1}=\Id_x$). A morphism $(x,\gamma) \to (x',\gamma')$ is defined by a morphism $f:x \to x'$ preserving half-braidings.

\begin{proof}[Proof of Proposition \ref{prop:rtp-unit}]
	The composed equivalence
	$$\FZ(\CE) \boxtimes_\CE \CC \simeq \Fun_{\CE|\CE}(\CE,\CE) \boxtimes_\CE \CC \simeq \Fun_{\CE|\CE}(\CE,\CE \boxtimes_\CE \CC) \simeq \Fun_{\CE|\CE}(\CE,\CC)$$
	carries $(e',c_{e',-}) \boxtimes_\CE y \mapsto (e' \otimes -,c_{e',-}) \boxtimes_\CE y \mapsto (e' \otimes -,c_{e',-}) \boxtimes_\CE y \mapsto (e' \odot y,\gamma_{e' \odot y,-})$,	where $\gamma_{e' \odot y,-} = \{ e' \odot y \odot - \xrightarrow{\Id_{e'} \odot \beta_{y,-}} e' \odot - \odot y \xrightarrow{c_{e',-} \odot \Id_y} - \odot e' \odot y \}$. In addition, this composed equivalence preserves the $\CE$-action and the $\CE$-module braiding. Then, we have the following calculation (where we omit the module constraint):
	$$
	\begin{aligned}
	\tau^{\FZ(\CE)}_{e,e' \odot y}
	&=\tau^{\FZ(\CE)}_{e,e'} \odot \Id_y \\
	&=(c_{e',e} \circ \beta_{e,e'}) \odot \Id_y\\
	&=(c_{e',e} \odot \Id_y) \circ (\beta_{e,e'} \odot \Id_y)\\
	&=(c_{e',e} \odot \Id_y) \circ (\Id_{e'} \odot b_{y,e}) \circ (\Id_{e'} \odot b_{y,e}^{-1}) \circ (\beta_{e,e'} \odot \Id_y)\\
	&=\gamma_{e' \odot y,e} \circ b_{e' \odot y,e}^{-1}.\\
	\end{aligned}
	$$
	Consequently, we have $\tau^{\FZ(\CE)}_{e,x}=\gamma_{x,e} \circ b_{x,e}^{-1}$, for $(x,\gamma_{x,-}) \in \Fun_{\CE|\CE}(\CE,\CC)$ and $e \in \CE$. Meanwhile, the $\CE$-module braiding induced by that of $\CC$ is exactly $\tau^\CC_{e,x}$.
	Thus, an object $(x,\gamma_{x,-}) \in \FZ(\CE) \boxdot_\CE \CC$ amounts to $\gamma_{x,e}=\tau_{e,x}^\CC \circ b_{x,e}$ for all $e \in \CE$.
	
	To prove the restriction of the functor $\FZ(\CE) \boxtimes_\CE\CC \to \CC$, $(e,c_{e,-}) \boxtimes_\CE x \mapsto e\otimes x$ to $\FZ(\CE) \boxdot_\CE \CC$ (denoted by $F$) is an equivalence, it amounts to show it is essentially surjective and fully faithful.
	
	Given an object $x \in \CC$, we point out its essential preimage is $(x,\gamma_{x,-})$ where $\gamma_{x,e}=\tau_{e,x}^\CC \circ b_{x,e}$ for all $e \in \CE$. 
	This means the functor $F$ is essentially surjective. 

	Given two objects $(x,\gamma_{x,-}),(x',\gamma'_{x,-}) \in \FZ(\CE)\boxdot_\CE\CC$ and a morphism $f\in \Hom_\CC(x,x')$, we have the following commutative diagram by the naturality of $\tau^\CC$:
	$$
	\xymatrix@C=48pt{
		x \odot e \ar[r]^-{b_{x,e}}_{=} \ar[d]_{f \odot \Id_{e}} & e \odot x \ar[r]^-{\tau_{e,x}^\CC} \ar[d]^{\Id_{e} \odot f} & e \odot x \ar[d]^{\Id_{e} \odot f}\\
		x' \odot e \ar[r]^-{b_{x',e}}_{=} & e \odot x' \ar[r]^-{\tau_{e,x'}^\CC} & e \odot x'\\
	}
	$$
	for any $e \in \CE$.
	In this diagram, the composition of the top horizontal arrows is $\gamma_{x,e}$ and the composition of the bottom horizontal arrows is $\gamma_{x',e}$. This means $f$ is compatible with $\gamma$ and $\gamma'$, which shows that the functor $F$ is fully faithful. 
\end{proof}

\begin{prop} \label{prop:rtp-Fun'}
	Let $\CC,\CD$ be finite braided $\CE$-modules. Then the equivalence $\CC^\op \boxtimes_\CE\CD \simeq \Fun_\CE(\CC,\CD)$ restricts an equivalence 
	\begin{equation}
	\CC^\op \boxdot_\CE \CD \simeq \Fun_\CE'(\CC,\CD),
	\end{equation}
	where $\Fun_\CE'(\CC,\CD)$ is the full subcategory of $\Fun_\CE(\CC,\CD)$ formed by the braided $\CE$-module functors (see Definition\ref{def:Fun'}).
\end{prop}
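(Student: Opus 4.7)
The plan is to reduce the claim to an algebra presentation and then to a ``local'' calculation at the generating algebra. By Proposition \ref{prop:rec-mod}, I would write $\CC \simeq \RMod_A(\CE)$ for some algebra $A \in \Alg(\CE)$; taking right duals gives $\CC^\op \simeq \LMod_A(\CE)$. Then Proposition \ref{prop:rec-tp}(2) provides the standard equivalence
$$\CC^\op \boxtimes_\CE \CD \xrightarrow{\ \sim\ } \Fun_\CE(\CC,\CD), \qquad x^\op \boxtimes_\CE y \ \mapsto\ (-\otimes_A x)\otimes y,$$
and under it every $F \in \Fun_\CE(\CC,\CD)$ is the image of $A^\op \boxtimes_\CE F(A)$, where $F(A)$ acquires its left $A$-module structure by applying $F$ to the canonical algebra action $A \to \End_\CE(A)$.

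Next I would exploit the fact that both conditions cutting out the two subcategories are determined by their values at $A$. By the remark following Definition \ref{def:Fun'}, an $\CE$-module functor $F$ lies in $\Fun'_\CE(\CC,\CD)$ iff diagram \eqref{fun'} commutes at $x=A$. On the other side, by Example \ref{exam:ModA-br}, an $\CE$-module braiding on an object of $\CC^\op \boxtimes_\CE \CD$ is recovered from its restriction to generators of the form $A^\op \boxtimes_\CE y$, and Example \ref{exam:Cop-br} gives the explicit formula $\tau^{\CC^\op}_{e,A^\op}=(\tau^\CC_{e,A})^L$. Consequently, the condition $\tau^{\CC^\op}_{e,w}=\tau^\CD_{e,w}$ for $w = A^\op \boxtimes_\CE y$ reduces to an equality of two endomorphisms of $e\odot(A^\op \boxtimes_\CE y)$ involving only $\tau^\CC_{e,A}$ on the first factor and $\tau^\CD_{e,y}$ on the second.

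The heart of the proof is then a diagram chase showing that, under the equivalence $x^\op \boxtimes_\CE y \mapsto (-\otimes_A x)\otimes y$, this equality at $A^\op \boxtimes_\CE y$ translates precisely into the commutativity of \eqref{fun'} at $x = A$ for the corresponding functor $F$. One side of the equality, when transported through the equivalence and its $\CE$-linear structure, becomes $S^F_{e,A}\circ F(\tau^\CC_{e,A})$; the other becomes $\tau^\CD_{e,F(A)}\circ S^F_{e,A}$. Once this identification is in place, everything else is formal: the restricted functor $\CC^\op \boxdot_\CE \CD \to \Fun'_\CE(\CC,\CD)$ inherits essential surjectivity and full faithfulness from the ambient equivalence, since both are full subcategories cut out by matched conditions.

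The main obstacle I expect is the explicit bookkeeping in this diagram chase, in particular reconciling the left-dual convention $\tau^{\CC^\op}_{e,A^\op}=(\tau^\CC_{e,A})^L$ with the $\CE$-module constraint $S^F$ governing how the transported braiding acts on $F$. One must carefully track how the universal balanced bifunctor $(x,y) \mapsto (-\otimes_A x)\otimes y$ interacts with the two $\CE$-actions (including the passage via $b_{x,e}$ from left to right $\CE$-action forced by the symmetry of $\CE$) and then verify that the resulting dualized formula for $\tau^{\CC^\op}$ cancels against the duality used in the identification $\CC^\op \simeq \LMod_A(\CE)$. Once this is sorted out, the proposition follows immediately.
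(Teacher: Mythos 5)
Your proposal follows essentially the same route as the paper: present $\CC\simeq\RMod_A(\CE)$, use the reconstruction equivalence $x\boxtimes_\CE y\mapsto -\otimes_A x^R\otimes y$ from Proposition \ref{prop:rec-tp}, and transport the two $\CE$-module braidings through it using the ``determined at $A$'' formula $\tau^\CC_{e,x}=\tau^\CC_{e,A}\otimes_A\Id_x$ of Example \ref{exam:ModA-br}, showing one becomes $G(z)(\tau^\CC_{e,-})$ and the other $\tau^\CD_{e,G(z)(-)}$ so that the two defining conditions match. The diagram chase you defer is exactly the short computation the paper carries out, so the argument is correct and essentially identical.
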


\begin{proof}
	Suppose $\CC=\RMod_A(\CE),\CD=\RMod_B(\CE)$ for algebras $A,B \in \CE$ (see Proposition \ref{prop:rec-mod}). Recall that the equivalence (denoted by $G$) 
	$$\CC^\op \boxtimes_\CE \CD \simeq \Fun_\CE(\CC,\CD)$$ 
	carries $x \boxtimes_\CE y \mapsto - \otimes_A x^R \otimes y$ (see Theorem \ref{prop:rec-tp}). 
	
	Then let us see to which maps the $\CE$-module braidings are carried via $G$.
	
	Pick objects $x \in \CC, y \in \CD$ and $e \in \CE$.
	On the one hand, we have $G(\tau^{\CC^\op}_{e,x \boxtimes_\CE y}) = G(\tau^{\CC^\op}_{e,x} \boxtimes_\CE \Id_y) = \Id_- \otimes_A \tau^\CC_{e,x^R} \otimes \Id_y$. Note that given $x',x'' \in \CC$, $x'$ is also a left $A$-module, and we have $\Id_{x'} \otimes_A \tau^\CC_{e,x''} = \tau^\CC_{e,x'} \otimes_A \Id_{x''}$. It is not hard to verify this by using the formula $\tau^\CC_{e,x} = \tau^\CC_{e,A} \otimes_A \Id_x$ (see Example \ref{exam:ModA-br}).
	Thus, we have $$G(\tau^{\CC^\op}_{e,x \boxtimes_\CE y}) = \tau^\CC_{e,-} \otimes_A \Id_{x^R \otimes y}.$$
	
	On the other hand, we have $G(\tau^{\CD}_{e,x \boxtimes_\CE y}) = \Id_{x} \boxtimes_\CE G(\tau^{\CD}_{e,y}) = \Id_{- \otimes_A x^R} \otimes \tau^\CD_{e,y}$. Also note that $- \otimes_A x^R$ is indeed an object in $\CE$.
	Thus, we have $$G(\tau^{\CD}_{e,x \boxtimes_\CE y}) = \tau^\CD_{e,- \otimes_A x^R \otimes y}.$$
	
	As a consequence, picking $z \in \CC^\op \boxtimes_\CE \CD$ and obtaining $G(z) \in \Fun_\CE(\CC,\CD)$, we have	
	$$G(\tau^{\CC^\op}_{e,z}) = G(z)(\tau^\CC_{e,-}) \quad {\rm and} \quad G(\tau^{\CD}_{e,z}) = \tau^\CC_{e,G(z)(-)}$$
	for all $e \in \CE$, which means $z \in \CC^\op \boxdot_\CE \CD$ amounts to $G(z) \in \Fun'_\CE(\CC,\CD)$ and leads to the conclusion.
	
\end{proof}

\begin{cor} \label{cor:op-br}
	Let $\CC$ be a finite braided $\CE$-module. We have a canonical equivalence
	\begin{equation}
	\CC^\op \simeq \Fun_\CE'(\CC,\FZ(\CE)).
	\end{equation}
\end{cor}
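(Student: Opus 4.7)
The plan is to obtain the equivalence by specializing Proposition \ref{prop:rtp-Fun'} to the case $\CD = \FZ(\CE)$ and then using Proposition \ref{prop:rtp-unit} to collapse the resulting relative tensor product. For this to make sense, I first need to know that $\CC^\op$ is itself a finite braided $\CE$-module, so that both propositions apply with $\CC$ replaced by $\CC^\op$ on one side; this is exactly the content of Example \ref{exam:Cop-br}, where the $\CE$-module braiding $\tau^{\CC^\op}_{e,x} = (\tau^\CC_{e,x^R})^L$ was constructed.

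Granting that, the argument proceeds in three quick steps. First, apply Proposition \ref{prop:rtp-Fun'} with $\CD = \FZ(\CE)$ to obtain the canonical equivalence
\begin{equation*}
\CC^\op \boxdot_\CE \FZ(\CE) \simeq \Fun'_\CE(\CC,\FZ(\CE)).
\end{equation*}
Second, use the commutativity of the relative tensor product (Remark \ref{rem:commutation-associative}) to rewrite the left-hand side as $\FZ(\CE) \boxdot_\CE \CC^\op$. Third, apply Proposition \ref{prop:rtp-unit} to the finite braided $\CE$-module $\CC^\op$ to get $\FZ(\CE) \boxdot_\CE \CC^\op \simeq \CC^\op$. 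Stringing the three equivalences together yields the desired canonical equivalence $\CC^\op \simeq \Fun'_\CE(\CC,\FZ(\CE))$.

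The only potentially subtle point is the well-definedness of the $\CE$-module braiding on $\CC^\op$ used in the first step: Example \ref{exam:Cop-br} defined it through a choice of presentation $\CC \simeq \RMod_A(\CE)$, and the text explicitly remarks that independence of $A$ is guaranteed precisely by Corollary \ref{cor:op-br} (together with Remark \ref{rem:preseve-br}). So logically one should read the statement as producing, from any presentation, an equivalence with a target $\Fun'_\CE(\CC,\FZ(\CE))$ that is defined intrinsically, and conclude independence of the choice post hoc. No additional work beyond invoking the two propositions is needed; the entire proof is a one-line chain of canonical equivalences.
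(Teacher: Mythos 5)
Your proof is correct and is exactly the derivation the paper intends (the corollary is stated without proof as an immediate consequence of Proposition \ref{prop:rtp-Fun'} with $\CD=\FZ(\CE)$, Remark \ref{rem:commutation-associative}, and Proposition \ref{prop:rtp-unit}). Your handling of the apparent circularity with Example \ref{exam:Cop-br} — reading the equivalence as landing in the intrinsically defined target $\Fun'_\CE(\CC,\FZ(\CE))$ and deducing independence of the presentation post hoc — is also the reading the paper's Remark \ref{rem:preseve-br} presupposes.
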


\begin{rem} \label{rem:preseve-br}
	It can be checked from the definitions routinely that the equivalences in Remark \ref{rem:commutation-associative}, Proposition \ref{prop:rtp-unit}, \ref{prop:rtp-Fun'} and Corollary \ref{cor:op-br} preserve the $\CE$-module braidings. 
\end{rem}

\begin{cor} \label{cor:funa}
	Let $\CA$ be a finite monoidal category, $_\CA\CC$ be a left $\CA$-module, and $\CD,\CB$ be finite braided $\CE$-modules. Suppose $\CD$ is equipped with a right exact $k$-linear monoidal functor $\CA\to\Fun_\CE'(\CD,\CD)$. Then the equivalence $\Fun_\CA(\CC,\CD)\boxtimes_\CE\CB \simeq \Fun_\CA(\CC,\CD\boxtimes_\CE\CB)$, $f\boxtimes_\CE x \mapsto f\boxtimes_\CE x$ restricts to an equivalence 
	\begin{equation}
	\Fun_\CA(\CC,\CD)\boxdot_\CE\CB \simeq \Fun_\CA(\CC,\CD\boxdot_\CE\CB)
	\end{equation}
	where the $\CE$-action and the $\CE$-module braiding on $\Fun_\CA(\CC,\CD)$ are induced by those on $\CD$.
\end{cor}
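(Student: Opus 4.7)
The plan is to track the two competing $\CE$-module braidings through the given equivalence
$$\Phi\colon \Fun_\CA(\CC,\CD)\boxtimes_\CE\CB \xrightarrow{\sim} \Fun_\CA(\CC,\CD\boxtimes_\CE\CB), \quad f\boxtimes_\CE x \mapsto \bigl(c \mapsto f(c)\boxtimes_\CE x\bigr),$$
and to show that the $\boxdot_\CE$ condition on the source translates, pointwise in $c\in\CC$, into the $\boxdot_\CE$ condition inside $\CD\boxtimes_\CE\CB$.

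First I would unpack the $\CE$-module braiding on $\Fun_\CA(\CC,\CD)$ induced from that of $\CD$: the $\CE$-action reads $(e\odot f)(c):=e\odot f(c)$, and the braiding is given componentwise by $(\tau^{\Fun_\CA(\CC,\CD)}_{e,f})_c := \tau^\CD_{e,f(c)}$. The hypothesis $\CA\to\Fun'_\CE(\CD,\CD)$ is exactly what is needed for $e\odot f$ to remain an $\CA$-module functor and for $\tau^\CD_{e,f(-)}$ to be a morphism of $\CA$-module functors, since every $a\in\CA$ acts on $\CD$ through a functor commuting with $\tau^\CD$. On the target $\CD\boxtimes_\CE\CB$ there are two $\CE$-module braidings $\tau^\CD$ and $\tau^\CB$, and by definition an object lies in $\CD\boxdot_\CE\CB$ precisely when these two agree.

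Next I would check that $\Phi$ intertwines the two pairs of braidings. On a pure tensor $f\boxtimes_\CE x$ one computes directly
$$\Phi\bigl(\tau^{\Fun_\CA(\CC,\CD)}_{e,f}\boxtimes_\CE \mathrm{id}_x\bigr)_c = \tau^\CD_{e,f(c)}\boxtimes_\CE \mathrm{id}_x = \tau^\CD_{e,\Phi(f\boxtimes_\CE x)(c)},$$
and symmetrically $\Phi(\mathrm{id}_f\boxtimes_\CE\tau^\CB_{e,x})_c = \tau^\CB_{e,\Phi(f\boxtimes_\CE x)(c)}$. By right exactness together with Corollary~\ref{cor:obj-tp-quo}, every object of $\Fun_\CA(\CC,\CD)\boxtimes_\CE\CB$ is a coequalizer of pure tensors, so these identifications propagate to all objects. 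Consequently $\xi$ satisfies $\tau^{\Fun_\CA(\CC,\CD)}_{e,\xi}=\tau^\CB_{e,\xi}$ as natural transformations of $\CA$-module functors if and only if $\tau^\CD_{e,\Phi(\xi)(c)} = \tau^\CB_{e,\Phi(\xi)(c)}$ for every $c\in\CC$, i.e.\ if and only if $\Phi(\xi)\in\Fun_\CA(\CC,\CD\boxdot_\CE\CB)$. Since $\Phi$ is already an equivalence on the ambient categories, restriction yields the desired equivalence.

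The main obstacle is the very first step: verifying that the induced $\CE$-module braiding on $\Fun_\CA(\CC,\CD)$ is well defined, i.e.\ that its components are morphisms of $\CA$-module functors and that the hexagon-type diagram \eqref{diag:E-br} is inherited from the one for $\tau^\CD$. This is precisely where the hypothesis $\CA\to\Fun'_\CE(\CD,\CD)$ enters; once established, the remainder is a pointwise unravelling as sketched.
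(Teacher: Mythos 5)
Your argument is correct, but it takes a different route from the paper's. The paper proves this corollary as a three-step composition of already-established equivalences: it rewrites $\Fun_\CA(\CC,\CD)\boxdot_\CE\CB$ as $\Fun'_\CE(\CB^\op,\Fun_\CA(\CC,\CD))$ using Proposition \ref{prop:rtp-Fun'}, commutes the two functor categories to get $\Fun_\CA(\CC,\Fun'_\CE(\CB^\op,\CD))$, and applies Proposition \ref{prop:rtp-Fun'} once more to recover $\Fun_\CA(\CC,\CD\boxdot_\CE\CB)$; the hypothesis $\CA\to\Fun'_\CE(\CD,\CD)$ is invoked only to say that $\CD$ is an $\CA$-$\CE$-bimodule whose $\CA$-action preserves the $\CE$-module braiding, which is what legitimizes the middle exchange. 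You instead chase the two competing braidings directly through the defining equivalence $\Phi$, check the identification on pure tensors, and propagate via Corollary \ref{cor:obj-tp-quo} and naturality. What the paper's approach buys is brevity and reuse of the duality $\CC^\op\boxdot_\CE\CD\simeq\Fun'_\CE(\CC,\CD)$, at the cost of hiding the pointwise nature of the $\boxdot_\CE$ condition inside that proposition; your approach makes that pointwise character explicit and also surfaces the one genuinely delicate point (that the induced $\CE$-action and braiding on $\Fun_\CA(\CC,\CD)$ are well defined as morphisms of $\CA$-module functors, which is exactly where $\CA\to\Fun'_\CE(\CD,\CD)$ is used). One small point worth spelling out in your last step: a right exact $\CA$-module functor $\CC\to\CD\boxtimes_\CE\CB$ landing objectwise in $\CD\boxdot_\CE\CB$ is the same thing as an object of $\Fun_\CA(\CC,\CD\boxdot_\CE\CB)$ because $\CD\boxdot_\CE\CB$ is a full subcategory closed under subquotients and direct sums, so cokernels agree in the two ambient categories.
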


\begin{proof}
	The condition that $\CD$ is a finite $\CE$-module equipped with a right exact $k$-linear monoidal functor $\CA\to\Fun_\CE'(\CD,\CD)$ means that $\CD$ is a finite $\CA$-$\CE$-bimodule and the left $\CA$-action on $\CD$ preserves the $\CE$-module braiding. Thus, we have the following composed equivalence:
	$$
	\begin{aligned}
	\Fun_\CA(\CC,\CD) \boxdot_\CE \CB 
	&\simeq \Fun'_\CE(\CB^\op,\Fun_\CA(\CC,\CD)) \\
	&\simeq \Fun_\CA(\CC,\Fun'_\CE(\CB^\op,\CD))
	\simeq \Fun_\CA(\CC,\CD \boxdot_\CE \CB).
	\end{aligned}
	$$
\end{proof}
	
	Let $\CC,\CD$ be finite monoidal categories over $\CE$, and $_\CC\CM_\CD,_\CC\CN_\CD$ finite $\CC$-$\CD$-bimodules over $\CE$. Considering the category $\Fun_{\CC|\CD}(\CM,\CN)$ and its $\CE$-module braiding mentioned in Example \ref{exam:fun_CD-br1}, we have the following useful fact:
\begin{lem}  \label{lem:fun_cd}
	We have 
	$\CE \boxdot_\CE \Fun_{\CC|\CD}(\CM,\CN)
	\simeq \Fun_{\CC\boxtimes_\CE\CD^\rev}(\CM,\CN)$.
\end{lem}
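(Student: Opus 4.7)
The plan is to realize both sides of the claimed equivalence as the same full subcategory of $\Fun_{\CC|\CD}(\CM,\CN)$, picked out by a single compatibility condition between the left and right module constraints and the bimodule-over-$\CE$ data $\eta_\CM, \eta_\CN$. The key observation is that the $\CE$-module braiding on $\Fun_{\CC|\CD}(\CM,\CN)$ of Example \ref{exam:fun_CD-br1} is a composite of exactly five arrows whose identity-condition rearranges into the standard $\CE$-balancing compatibility satisfied by $\CC\boxtimes_\CE\CD^\rev$-linear functors.

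First I would unpack the left-hand side. By Remark \ref{rem:centralizer-SET}, $\CE \boxdot_\CE \Fun_{\CC|\CD}(\CM,\CN)$ is the full subcategory consisting of those $f$ for which $\tau_{e,f} = \Id_{\phi_\CC(e)\odot f(-)}$ for every $e \in \CE$. Writing out $\tau_{e,f}$ using the formula of Example \ref{exam:fun_CD-br1} and inverting, this amounts to the equation
\begin{equation*}
f(\eta_{\CM,e}) \circ S^{f|L}_{\phi_\CC(e), m} \;=\; S^{f|R}_{\phi_\CD(e), m} \circ (\eta_{\CN,e} \odot \Id_{f(m)})
\end{equation*}
for all $e \in \CE, m \in \CM$, i.e.\ the commutativity of the evident square involving $\eta_\CM, \eta_\CN$, $S^{f|L}$ and $S^{f|R}$ on the generators $\phi_\CC(e), \phi_\CD(e)$.

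Next I would analyze the right-hand side. A $\CC$-$\CD$-bimodule over $\CE$ is, by Definition \ref{def:bim_E}, a left $\CC \boxtimes_\CE \CD^\rev$-module, which is the same datum as a $\CC$-left/$\CD$-right bimodule together with the balancing $\eta_\CM$. A $k$-linear functor $f:\CM\to\CN$ is $\CC\boxtimes_\CE\CD^\rev$-linear iff it is simultaneously a left $\CC$- and right $\CD$-module functor whose constraints are compatible with the balancing identifications $\phi_\CC(e)\boxtimes_\CE\one \simeq \one\boxtimes_\CE\phi_\CD(e)$. Because the generators of these identifications are precisely the $\phi_\CC(e), \phi_\CD(e)$ for $e\in\CE$, compatibility reduces to exactly the same displayed equation as above. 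Hence both categories sit inside $\Fun_{\CC|\CD}(\CM,\CN)$ as the same full subcategory, and the tautological inclusion gives the equivalence.

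The main technical step, and the one that requires care, is the bookkeeping of the five-arrow composite defining $\tau_{e,f}$: checking that inverting $S^{f|L}$ on the left end and $\eta_\CM$ in the middle yields exactly the compatibility square expected from $\CC\boxtimes_\CE\CD^\rev$-linearity, with matching orientations and sides. A secondary point to verify is that checking the compatibility on all objects of the form $\phi_\CC(e)$ (resp.\ $\phi_\CD(e)$) is enough to conclude full $\CC\boxtimes_\CE\CD^\rev$-linearity; this is immediate from the universal property of $\boxtimes_\CE$ once $f$ is already separately $\CC$-left and $\CD$-right linear.
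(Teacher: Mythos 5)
Your proposal is correct and follows essentially the same route as the paper: both sides are identified as the same full subcategory of $\Fun_{\CC|\CD}(\CM,\CN)$, with the triviality of the $\CE$-module braiding of Example \ref{exam:fun_CD-br1} unwinding to precisely the compatibility square between $S^{f|L}$, $S^{f|R}$, $\eta_\CM$ and $\eta_\CN$ that characterizes $\CC\boxtimes_\CE\CD^\rev$-linearity. Your closing remark about reducing the balancing compatibility to the generators $\phi_\CC(e),\phi_\CD(e)$ is a point the paper leaves implicit, and is a welcome addition.
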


\begin{proof}
	Note that each side of the equivalence is a full subcategory of the category $\Fun_{\CC|\CD}(\CM,\CN)$. Then, given a functor $(f,S^{f|L},S^{f|R}) \in \Fun_{\CC|\CD}(\CM,\CN)$, $f$ is in $\Fun_{\CC \boxtimes_\CE \CD^\rev}(\CM,\CN)$ if and only if the diagram
	$$
	\xymatrix@C=48pt{
		\phi_\CC(e) \odot f(-) \ar[r]^-{\eta_{\CN,e} \circ \Id_f} \ar[d]_{S^{f|L}_{\phi_\CC(e),-}} & f(-) \odot \phi_\CD(e) \ar[d]^{S^{f|R}_{-,\phi_\CD(e)}} \\
		f(\phi_\CC(e) \odot -) \ar[r]^-{\Id_f \circ \eta_{\CM,e}} & f(- \odot \phi_\CD(e)) \\
	}
	$$
	commutes for all $e \in \CE$.
	
	Move clockwise around this square and compose these isomorphisms from the top left, and then we get the following composed morphism:
	$$
	\begin{aligned}
	\phi_\CC(e) \odot f(-) 
	&\xrightarrow{\eta_{\CN,e} \circ f} f(-) \odot \phi_\CD(e) 
	\xrightarrow{S^{f|R}_{-,\phi_\CD(e)}} f(- \odot \phi_\CD(e)) \\	
	&\xrightarrow{f \circ \eta_{\CM,e}^{-1}} f(\phi_\CC(e) \odot -) 
	\xrightarrow{(S^{f|L}_{\phi_\CC(e),-})^{-1}} \phi_\CC(e) \odot f(-), \\
	\end{aligned}
	$$
	and coincidentally it is nothing but the $\CE$-module braiding on $f$.
	Then the fact that the diagram commutes exactly means the $\CE$-module braiding is trivial, which leads to the target equivalence.
\end{proof}

	According to \cite[Lemma 3.1.1]{KZ1}, we have the equivalence $\Fun_{\CC|\CD}(\CM,\CN) \simeq \Fun_{\CC|\CC}(\CC,\Fun_{\CD^\rev}(\CM,\CN))$, where the right-$\CC$-module structure on $\Fun_{\CD^\rev}(\CM,\CN)$ is induced by the left-$\CC$-module structure of $\CM$. Then, we have the following fact:
\begin{cor} \label{cor:expand}
	We have $\Fun_{\CC\boxtimes_\CE\CD^\rev}(\CM,\CN) \simeq \CE\boxdot_\CE\Fun_{\CC|\CC}(\CC,\Fun_{\CD^\rev}(\CM,\CN))$. In particular, $\Fun_\CC(\CM,\CN) \simeq \CE\boxdot_\CE\Fun_{\CC|\CC}(\CC,\Fun_\CE(\CM,\CN))$. 
\end{cor}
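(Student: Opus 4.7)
The plan is to combine Lemma \ref{lem:fun_cd} with the equivalence
\[
\Fun_{\CC|\CD}(\CM,\CN) \simeq \Fun_{\CC|\CC}(\CC,\Fun_{\CD^\rev}(\CM,\CN))
\]
from \cite[Lemma 3.1.1]{KZ1} cited just above. Lemma \ref{lem:fun_cd} already rewrites the left-hand side of the target equivalence as $\CE\boxdot_\CE\Fun_{\CC|\CD}(\CM,\CN)$. Since the operation $\CE\boxdot_\CE(-)$ depends only on the $\CE$-action and $\CE$-module braiding, it is enough to upgrade the KZ1 equivalence to an equivalence of braided $\CE$-modules, and then apply $\CE\boxdot_\CE(-)$ to both sides.

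The braided $\CE$-module structure on $\Fun_{\CC|\CC}(\CC,\Fun_{\CD^\rev}(\CM,\CN))$ is the one given by Example \ref{exam:fun_CD-br1}, where the inner category $\CK := \Fun_{\CD^\rev}(\CM,\CN)$ carries a $\CC$-$\CC$-bimodule over $\CE$ structure with left $\CC$-action induced from $\CN$ and right $\CC$-action induced from $\CM$, its natural isomorphism $\eta_\CK$ being built from $\eta_\CN$ (acting post-composition) and $\eta_\CM^{-1}$ (acting pre-composition). Writing the KZ1 equivalence concretely by currying as $f \mapsto (c \mapsto c\odot f(-))$ with inverse $F \mapsto F(\one_\CC)$, I would show that it intertwines the $\CE$-module braiding of Example \ref{exam:fun_CD-br1} on the left with the analogous braiding on the right: the $\eta_\CN$-contribution to the left-hand $\tau_{e,f}$ is absorbed into the outer $\Fun_{\CC|\CC}(\CC,-)$ layer through its left $\CC$-action on $\CK$, while the $\eta_\CM^{-1}$-contribution is absorbed into the braiding internal to $\CK$. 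Conceptually both braidings are built from the same structural data $\eta_\CM,\eta_\CN$, but verifying that they match under the currying is a diagram chase, and this bookkeeping is the main obstacle of the proof.

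For the \emph{in particular} statement, I would specialize to $\CD = \CE$. In that case $\CC\boxtimes_\CE\CD^\rev \simeq \CC\boxtimes_\CE\CE \simeq \CC$, so $\Fun_{\CC\boxtimes_\CE\CD^\rev}(\CM,\CN) = \Fun_\CC(\CM,\CN)$, and $\Fun_{\CD^\rev}(\CM,\CN) = \Fun_{\CE^\rev}(\CM,\CN) = \Fun_\CE(\CM,\CN)$ since $\CE^\rev\simeq\CE$ as symmetric monoidal categories. Substituting into the first equivalence then yields the second.
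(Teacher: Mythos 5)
Your proof is correct and follows the paper's (implicit) argument exactly: the corollary is stated as an immediate consequence of Lemma \ref{lem:fun_cd} together with the equivalence $\Fun_{\CC|\CD}(\CM,\CN) \simeq \Fun_{\CC|\CC}(\CC,\Fun_{\CD^\rev}(\CM,\CN))$ from \cite{KZ1}, upgraded so that it respects the $\CE$-actions and $\CE$-module braidings, after which one applies $\CE\boxdot_\CE(-)$ to both sides. The diagram chase you defer --- matching the braiding of Example \ref{exam:fun_CD-br1} with the induced one on the curried side (your choice of $\eta_\CK$ produces the braiding called $\tau^2$ in Example \ref{exam:brs}) --- is likewise left implicit in the paper, so your write-up is if anything more explicit than the source.
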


	Sometimes, for a finite $\CE$-module $\CC$, there can be more than one nonequivalent $\CE$-module braiding ($\tau^{\CC|i},i=1,2,...$) on it. When we write down a relative tensor product over $\CE$ of $\CC$ and $\CD$ (another finite braided $\CE$-module), we need to point out which braiding we use in the condition $\tau^\CC_{e,x} = \tau^\CD_{e,x}$. 
	For convenience, in this paper, we use $\CC \boxdot_\CE^i \CD$ to denote the relative tensor product with respect to the condition $\tau^{\CC|i}_{e,x} = \tau^\CD_{e,x}$ for all $e \in \CE$.

\begin{exam} \label{exam:brs}
	Let $\CC,\CD$ be finite monoidal categories over $\CE$ and $_\CC\CM_\CD,_\CC\CN_\CD$ finite $\CC$-$\CD$-bimodules over $\CE$. By Corollary \ref{cor:expand}, there is an equivalence $\Fun_{\CC|\CD}(\CM,\CN) \simeq \CE \boxdot_\CE \Fun_{\CC|\CC}(\CC,\Fun_{\CE|\CD}(\CM,\CN))$. Then note that there are three braidings on the $\CE$-module $\Fun_{\CC|\CC}(\CC,\Fun_{\CE|\CD}(\CM,\CN))$ as follows:
	
	$
	\tau^1_{e,F} : \phi_\CC(e) \odot F(-)(-) 
	\xrightarrow{S^{F(-)|L}_{e,-}} F(-)(\phi_\CC(e) \odot -) 
	\xrightarrow{S^{F|R}_{-,\phi_\CC(e)}} F(- \odot \phi_\CC(e))(-) 
	\xrightarrow{\sim} F(\phi_\CC(e) \odot -)(-) 
	\xrightarrow{(S^{F|L}_{\phi_\CC(e),-})^{-1}} \phi_\CC(e) \odot F(-)(-) 
	$,
	
	$
	\tau^2_{e,F} : \phi_\CC(e) \odot F(-)(-) 
	\xrightarrow{\eta_{\CN,e}} F(-)(-) \odot \phi_\CD(e) 
	\xrightarrow{S^{F(-)|R}_{-,\phi_\CD(e)}} F(-)(- \odot \phi_\CD(e)) 
	\xrightarrow{\eta_{\CM,e}^{-1}} F(-)(\phi_\CC(e) \odot -) 
	\xrightarrow{S^{F|R}_{-,\phi_\CC(e)}} F(- \odot \phi_\CC(e))(-) 
	\xrightarrow{\sim} F(\phi_\CC(e) \odot -)(-) 
	\xrightarrow{(S^{F|L}_{\phi_\CC(e),-})^{-1}} \phi_\CC(e) \odot F(-)(-) 
	$ and
	
	$
	\tau^3_{e,F} : \phi_\CC(e) \odot F(-)(-) 
	\xrightarrow{\eta_{\CN,e}} F(-)(-) \odot \phi_\CD(e) 
	\xrightarrow{S^{F(-)|R}_{-,\phi_\CD(e)}} F(-)(- \odot \phi_\CD(e)) 
	\xrightarrow{\eta_{\CM,e}^{-1}} F(-)(\phi_\CC(e) \odot -) 
	\xrightarrow{(S^{F(-)|L}_{e,-})^{-1}} \phi_\CC(e) \odot F(-)(-) 
	$.
	
	Here, $S^{F|L}$, $S^{F|R}$ are the isomorphisms equipped by the $\CC$-$\CC$-bimodule functor $F: \CC \to \Fun_{\CE|\CD}(\CM,\CN)$, and $S^{F(-)|L}$, $S^{F(-)|R}$ are the isomorphisms equipped by the $\CE$-$\CD$-bimodule functor $F(-) \in \Fun_{\CE|\CD}(\CM,\CN)$, and $\eta_\CM,\eta_\CN$ are the natural transformations of finite $\CC$-$\CD$-bimodules $\CM,\CN$ over $\CE$. 
	
	Note that 
	\begin{equation} \label{eq:br-relation} 
	\tau^3_{e,F} = (\tau^1_{e,F})^{-1} \circ \tau^2_{e,F}.
	\end{equation}  
	
	For readers' convenience, we provide a view to see these three $\CE$-module braidings: 
	\begin{enumerate}
	\item $\tau^1$ is caused by the left-$\CC$-module and the $\CE$-module structures; 
	\item $\tau^2$ is caused by the left-$\CC$-module and the right-$\CD$-module structures; 
	\item $\tau^3$ is caused by the right-$\CD$-module and the $\CE$-module structures. 
	\end{enumerate}
	This is the reason why these three $\CE$-module braidings satisfy the relation \eqref{eq:br-relation}.
\end{exam}

\begin{thm} \label{thm:zfun}
	Let $\CC_1,\CD_1,\CC_2,\CD_2$ be finite monoidal categories over $\CE$, and let $_{\CC_1}\CM_{1\,\CD_1},_{\CC_1}\CN_{1\,\CD_1},_{\CC_2}\CM_{2\,\CD_2},{}_{\CC_2}\CN_{2\,\CD_2}$ be finite bimodules over $\CE$. Then the formula $f\boxdot_\CE g \mapsto f\boxtimes_\CE g$ determines an equivalence
	\begin{equation} \label{eq:stacking}
	\Fun_{\CC_1|\CD_1}(\CM_1,\CN_1) \boxdot_\CE \Fun_{\CC_2|\CD_2}(\CM_2,\CN_2) \simeq \Fun_{\CC_1\boxtimes_\CE\CC_2|\CD_1\boxtimes_\CE\CD_2}(\CM_1\boxtimes_\CE\CM_2,\CN_1\boxtimes_\CE\CN_2).
	\end{equation} 
	Moreover, when $\CM_1=\CN_1$ and $\CM_2=\CN_2$, \eqref{eq:stacking} is a monoidal equivalence.	
	In particular, the equivalence
	\begin{equation} \label{eq:center-monoidal}
	\FZ(\CC)\boxdot_\CE\FZ(\CD) \simeq \FZ(\CC\boxtimes_\CE\CD)
	\end{equation}
	is an equivalence of braided monoidal categories containing $\CE$.
\end{thm}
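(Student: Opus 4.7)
The plan is to construct the equivalence explicitly and identify it as the restriction of the non-enriched stacking equivalence to appropriate $\CE$-compatible full subcategories. First, for $f\in\Fun_{\CC_1|\CD_1}(\CM_1,\CN_1)$ and $g\in\Fun_{\CC_2|\CD_2}(\CM_2,\CN_2)$, the $k$-linear functor $f\boxtimes_\CE g\colon\CM_1\boxtimes_\CE\CM_2\to\CN_1\boxtimes_\CE\CN_2$ is canonically a $\CC_1\boxtimes\CC_2$-$\CD_1\boxtimes\CD_2$-bimodule functor, with structural constraints built from $S^{f|L},S^{f|R},S^{g|L},S^{g|R}$. For this to refine to a bimodule functor for $\CC_1\boxtimes_\CE\CC_2$ and $\CD_1\boxtimes_\CE\CD_2$, the two canonical left actions of $\phi_{\CC_1}(e)\boxtimes_\CE\one_{\CC_2}$ and $\one_{\CC_1}\boxtimes_\CE\phi_{\CC_2}(e)$ (identified in $\CC_1\boxtimes_\CE\CC_2$) must agree via $f\boxtimes_\CE g$, and likewise on the right. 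Unwinding the $\CE$-module braidings of Example \ref{exam:fun_CD-br1} via a short diagram chase using $\eta_{\CM_i},\eta_{\CN_i}$ from Remark \ref{rem:bim_E} shows that this descent condition is precisely the equality of the two induced $\CE$-module braidings on $f\boxtimes_\CE g$, i.e., the defining condition of $\boxdot_\CE$. Hence $\Phi\colon f\boxdot_\CE g\mapsto f\boxtimes_\CE g$ yields a well-defined functor into the right-hand side of \eqref{eq:stacking}.

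For the equivalence, the $\CE=\bk$ case
\[
\Fun_{\CC_1|\CD_1}(\CM_1,\CN_1)\boxtimes\Fun_{\CC_2|\CD_2}(\CM_2,\CN_2)\simeq\Fun_{\CC_1\boxtimes\CC_2|\CD_1\boxtimes\CD_2}(\CM_1\boxtimes\CM_2,\CN_1\boxtimes\CN_2)
\]
is the non-enriched analogue, which follows from \cite[Theorem 3.1.7]{KZ1} applied one-sidedly to $\CM_i,\CN_i$ viewed as left $\CC_i\boxtimes\CD_i^\rev$-modules. On the source side, passing from $\boxtimes$ to $\boxtimes_\CE$ and then to $\boxdot_\CE$ cuts out a full subcategory; on the target side, presenting $\CM_1\boxtimes_\CE\CM_2$ and $\CN_1\boxtimes_\CE\CN_2$ as $\RMod_{L_\CE}$-subcategories of the $\boxtimes$-products via Remark \ref{rem:tp-m} selects the full subcategory of bimodule functors that descend through these quotients and respect the $\CE$-identifications in $\CC_1\boxtimes_\CE\CC_2$ and $\CD_1\boxtimes_\CE\CD_2$. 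The first paragraph matches these two full subcategories under the non-enriched equivalence, giving essential surjectivity and full faithfulness of $\Phi$.

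For the monoidality assertion when $\CM_i=\CN_i$, both sides of \eqref{eq:stacking} have tensor products given by composition of bimodule endofunctors, and the identity $(f_1\boxtimes_\CE g_1)\circ(f_2\boxtimes_\CE g_2)=(f_1\circ f_2)\boxtimes_\CE(g_1\circ g_2)$ together with $\Id_{\CM_1}\boxtimes_\CE\Id_{\CM_2}=\Id_{\CM_1\boxtimes_\CE\CM_2}$ shows $\Phi$ is monoidal. Specializing to $\CC_i=\CD_i$ and $\CM_i=\CN_i=\CC_i$ yields the braided monoidal equivalence $\FZ(\CC)\boxdot_\CE\FZ(\CD)\simeq\FZ(\CC\boxtimes_\CE\CD)$; preservation of the half-braiding is a direct check using Example \ref{exam:fun_CD}, and compatibility with $\phi_{\FZ(\CC)},\phi_{\FZ(\CD)}$ makes this an equivalence of braided monoidal categories containing $\CE$. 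The main technical obstacle is the first-paragraph diagram chase matching the $\boxdot_\CE$-condition with the bimodule descent over $\CE$; the rest is bookkeeping with universal properties of the various tensor products.
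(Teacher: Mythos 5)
Your first paragraph is sound: the diagram chase showing that, on objects of the form $f\boxtimes_\CE g$, the condition for the structure maps $S^{f|L}\boxtimes_\CE S^{g|L}$ and $S^{f|R}\boxtimes_\CE S^{g|R}$ to be compatible with the $\CE$-balancing (expressed once through the $\phi_{\CC_i}$'s and once through the $\phi_{\CD_i}$'s, the two expressions differing by the $\eta$'s) is exactly the equality of the two induced $\CE$-module braidings. This is essentially Lemma \ref{lem:fun_cd} adapted to the stacked situation, and it correctly produces the functor $\Phi$ on the generating objects.

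The gap is in the second paragraph, which is where the actual content of the theorem lives. You assert that "passing from $\boxtimes$ to $\boxtimes_\CE$ and then to $\boxdot_\CE$ cuts out a full subcategory" of the source and that the target is "the full subcategory of bimodule functors that descend", and that the non-enriched equivalence of \cite[Theorem 3.1.7]{KZ1} matches these. Neither claim parses literally. On the source, $\CA\boxtimes_\CE\CB\simeq\RMod_{L_\CE}(\CA\boxtimes\CB)$ (Remark \ref{rem:tp-m}) is a category of modules over an algebra, not a full subcategory of $\CA\boxtimes\CB$; only the further passage to $\boxdot_\CE$ is a full subcategory. On the target, a $\CC_1\boxtimes_\CE\CC_2$-$\CD_1\boxtimes_\CE\CD_2$-bimodule functor $\CM_1\boxtimes_\CE\CM_2\to\CN_1\boxtimes_\CE\CN_2$ is not a $\CC_1\boxtimes\CC_2$-$\CD_1\boxtimes\CD_2$-bimodule functor $\CM_1\boxtimes\CM_2\to\CN_1\boxtimes\CN_2$ satisfying a property: the domains and codomains are different categories, and relating the two functor categories requires an induction/descent argument that is precisely what needs to be proved. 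Moreover, objects of either side of \eqref{eq:stacking} are only subquotients of objects of the form $f\boxdot_\CE g$, so your first-paragraph computation on generators does not by itself give essential surjectivity or full faithfulness. The paper closes exactly this gap by a different route: it uses Corollary \ref{cor:expand} to strip off $\CC_1,\CD_1,\CC_2,\CD_2$ one at a time (at the cost of juggling the three $\CE$-module braidings $\tau^1,\tau^2,\tau^3$ of Example \ref{exam:brs} and the relation $\tau^3=(\tau^1)^{-1}\circ\tau^2$), then applies Corollary \ref{cor:funa} to move $\boxdot_\CE$ inside the functor category, reducing everything to the case $\CC_i=\CD_i=\CE$, where the statement becomes $\FZ(\CE)\boxdot_\CE\FZ(\CE)\simeq\FZ(\CE)$, i.e.\ Proposition \ref{prop:rtp-unit}. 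Some argument of comparable substance is needed in your approach; as written, the equivalence is asserted rather than proved. The monoidality and braiding statements at the end are fine once the equivalence is in hand.
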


\begin{proof}
	We claim that we can reduce the problem (to show the equivalence \eqref{eq:stacking}) to the special case $\CC_1=\CD_1=\CC_2=\CD_2=\CE$. We give the process of making $\CC_1=\CE$, and it is similar for others.

	Note that we have the following composed equivalence:
	\begin{equation} \label{eq:reduction}
	\begin{aligned}
	\Fun_{\CC_1|\CD_1}&(\CM_1,\CN_1) \boxdot_\CE \Fun_{\CC_2|\CD_2}(\CM_2,\CN_2) \\
	&\simeq \left(\CE \boxdot_\CE^1 \Fun_{\CC_1|\CC_1}(\CC_1,\Fun_{\CE|\CD_1}(\CM_1,\CN_1))\right) \boxdot_\CE^2 \Fun_{\CC_2|\CD_2}(\CM_2,\CN_2) \\
	&\simeq \CE \boxdot_\CE^1 \left(\Fun_{\CC_1|\CC_1}(\CC_1,\Fun_{\CE|\CD_1}(\CM_1,\CN_1)) \boxdot_\CE^3 \Fun_{\CC_2|\CD_2}(\CM_2,\CN_2)\right) \\
	&\simeq \CE \boxdot_\CE^1 \Fun_{\CC_1|\CC_1}(\CC_1,\Fun_{\CE|\CD_1}(\CM_1,\CN_1) \boxdot_\CE \Fun_{\CC_2|\CD_2}(\CM_2,\CN_2)). \\
	\end{aligned}
	\end{equation}
	Here, we follow the notations in Example \ref{exam:brs} using $\tau^i$ to denote the $\CE$-module braidings on $\Fun_{\CC_1|\CC_1}(\CC_1,\Fun_{\CE|\CD_1}(\CM_1,\CN_1))$. And we use $\tau'$ to denote the $\CE$-module braiding on $\Fun_{\CC_2|\CD_2}(\CM_2,\CN_2)$ mentioned in Example \ref{exam:fun_CD-br1}.
	
	In the first step, we used Corollary \ref{cor:expand}. 
	
	In the second step, both sides of the equivalence can be identified with a full subcategory of $\Fun_{\CC_1|\CC_1}(\CC_1,\Fun_{\CE|\CD_1}(\CM_1,\CN_1)) \boxtimes_\CE \Fun_{\CC_2|\CD_2}(\CM_2,\CN_2)$. Note that the former consists	of the objects $x$ such that $\tau_{e,x}^1=\Id_{e \odot x}$ and $\tau_{e,x}^2=\tau'_{e,x}$, while the latter is formed by the objects $x$ such that $\tau_{e,x}^1=\Id_{e \odot x}$ and $\tau_{e,x}^3=\tau'_{e,x}$. These two conditions are equivalent due to the relation \eqref{eq:br-relation} $\tau^3_{e,F} = (\tau^1_{e,F})^{-1} \circ \tau^2_{e,F}$. 
	
	In the last step, we used Corollary \ref{cor:funa}, where the right exact $k$-linear monoidal functor $\CC_1 \boxtimes \CC_1^\rev \to \Fun'_\CE(\Fun_{\CE|\CD_1}(\CM_1,\CN_1),\Fun_{\CE|\CD_1}(\CM_1,\CN_1))$ is given by $c_1 \boxtimes c_2 \mapsto \{f(-) \mapsto c_1 \odot f(c_2 \odot -)\}_{f \in \Fun_{\CE|\CD_1}(\CM_1,\CN_1)}$.
	
	Then, we find that the problem is reduced to the special case $\CC_1=\CE$, i.e. to show the following equivalence: 	
	\begin{equation} \label{eq:assumption}
	\Fun_{\CE|\CD_1}(\CM_1,\CN_1) \boxdot_\CE \Fun_{\CC_2|\CD_2}(\CM_2,\CN_2) \simeq \Fun_{\CC_2|\CD_1\boxtimes_\CE\CD_2}(\CM_1\boxtimes_\CE\CM_2,\CN_1\boxtimes_\CE\CN_2).
	\end{equation} 
	In fact, if we have the equivalence \eqref{eq:assumption}, we can go on with our calculation following \eqref{eq:reduction}:
	\begin{equation}
	\begin{aligned}
	\Fun_{\CC_1|\CD_1}&(\CM_1,\CN_1) \boxdot_\CE \Fun_{\CC_2|\CD_2}(\CM_2,\CN_2) \\
	&\simeq \CE \boxdot_\CE^1 \Fun_{\CC_1|\CC_1}(\CC_1,\Fun_{\CE|\CD_1}(\CM_1,\CN_1) \boxdot_\CE \Fun_{\CC_2|\CD_2}(\CM_2,\CN_2)) \\
	&\simeq \CE \boxdot_\CE^1 \Fun_{\CC_1|\CC_1}(\CC_1,\Fun_{\CC_2|\CD_1\boxtimes_\CE\CD_2}(\CM_1\boxtimes_\CE\CM_2,\CN_1\boxtimes_\CE\CN_2))\\
	&\simeq \Fun_{\CC_1\boxtimes_\CE\CC_2|\CD_1\boxtimes_\CE\CD_2}(\CM_1\boxtimes_\CE\CM_2,\CN_1\boxtimes_\CE\CN_2) \\
	\end{aligned}
	\end{equation} 
	where we used Corollary \ref{cor:expand} inversely in the last step.
	
	By the same token, we can reduce the problem to the special case $\CC_1=\CD_1=\CC_2=\CD_2=\CE$.
	
	Also note that the equivalence ($i=1,2$)
	$$
	\Fun_{\CE|\CE}(\CM_i,\CN_i) \simeq \Fun_{\CE|\CE}(\CE,\CM_i^\op\boxtimes_\CE\CN_i) \simeq \FZ(\CE)\boxtimes_\CE\CM_i^\op\boxtimes_\CE\CN_i,
	$$
	carries the $\CE$-module braiding on $\Fun_{\CE|\CE}(\CM_i,\CN_i)$ to the one induced by $\FZ(\CE)$. 	
	Therefore, it suffices to show $\FZ(\CE) \boxdot_\CE \FZ(\CE) \simeq \FZ(\CE)$, which is a special case of Proposition \ref{prop:rtp-unit}.
	
	By a tedious but routine calculation, one can verify the equivalence \eqref{eq:stacking} carries $f \boxdot_\CE g \mapsto f \boxtimes_\CE g$, which is apparent in the special case $\FZ(\CE) \boxdot_\CE \FZ(\CE) \simeq \FZ(\CE)$.
	
	When $\CM_1=\CN_1$ and $\CM_2=\CN_2$, the formula $f \boxdot_\CE g \mapsto f \boxtimes_\CE g$ clearly defines a monoidal equivalence. Moreover, in \eqref{eq:center-monoidal}, this prescription preserves braidings.	
\end{proof}	

\begin{cor}
	Let $\CC,\CD$ be finite monoidal categories over $\CE$. Then there is a braided monoidal equivalence $$\FZ_{/\CE}(\CC) \boxtimes_\CE \FZ_{/\CE}(\CD) \simeq \FZ_{/\CE}(\CC \boxtimes_\CE \CD).$$
	In particular, when $\CE=\bk$, there is a braided monoidal equivalence $$\FZ(\CC)\boxtimes\FZ(\CD) \simeq \FZ(\CC\boxtimes\CD).$$
\end{cor}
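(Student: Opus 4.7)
The plan is to reduce this corollary to Theorem~\ref{thm:zfun} by using the identification of the $/\CE$-center with a relative tensor product. Concretely, Remark~\ref{rem:centralizer-SET} tells us that whenever $\CB$ is a finite braided monoidal category containing $\CE$, there is an identification $\CE \boxdot_\CE \CB \simeq \CE'|_\CB$, so in particular $\FZ_{/\CE}(\CC) = \CE'|_{\FZ(\CC)} \simeq \CE \boxdot_\CE \FZ(\CC)$, and similarly for $\CD$. The strategy is to move this identification across the equivalence provided by Theorem~\ref{thm:zfun}.

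First I would rewrite the left-hand side using Remark~\ref{rem:centralizer-SET} as
\[
\FZ_{/\CE}(\CC) \boxtimes_\CE \FZ_{/\CE}(\CD) \simeq (\CE \boxdot_\CE \FZ(\CC)) \boxtimes_\CE (\CE \boxdot_\CE \FZ(\CD)).
\]
Next, applying the distributivity-type identity in Remark~\ref{rem:stacking-centralizer} with the finite braided $\CE$-modules $\FZ(\CC)$ and $\FZ(\CD)$ (which, being finite braided monoidal categories containing $\CE$, upgrade the equivalence to a braided monoidal one), this becomes $\CE \boxdot_\CE (\FZ(\CC) \boxdot_\CE \FZ(\CD))$. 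Now Theorem~\ref{thm:zfun} provides the braided monoidal equivalence $\FZ(\CC) \boxdot_\CE \FZ(\CD) \simeq \FZ(\CC \boxtimes_\CE \CD)$ of categories containing $\CE$, and a second application of Remark~\ref{rem:centralizer-SET} then identifies
\[
\CE \boxdot_\CE \FZ(\CC \boxtimes_\CE \CD) \simeq \CE'|_{\FZ(\CC \boxtimes_\CE \CD)} = \FZ_{/\CE}(\CC \boxtimes_\CE \CD),
\]
completing the chain.

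For the special case $\CE = \bk$, the category $\bk$ is already the unit for $\boxtimes$ and sits inside every center as its trivial subcategory, so $\FZ_{/\bk}(\CC) = \bk'|_{\FZ(\CC)} = \FZ(\CC)$; moreover $\boxdot_\bk$ and $\boxtimes_\bk$ both coincide with the ordinary Deligne tensor product, so the general statement specializes directly to $\FZ(\CC) \boxtimes \FZ(\CD) \simeq \FZ(\CC \boxtimes \CD)$.

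The only thing to be careful about is that every link in the chain is genuinely a braided monoidal equivalence rather than merely an $\CE$-linear equivalence. This is the (mild) main obstacle: Theorem~\ref{thm:zfun} supplies it for the middle step, and the equivalences from Remarks~\ref{rem:centralizer-SET} and~\ref{rem:stacking-centralizer} are realized as inclusions of centralizer subcategories closed under the tensor product and braiding, so they preserve all the structure in sight; one just needs to verify the bookkeeping that the $\CE$-module braidings used in each $\boxdot_\CE$ match up (as recorded in Remark~\ref{rem:preseve-br}). Once this compatibility is noted, the proof is the four-line composite of equivalences above.
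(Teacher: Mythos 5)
Your proposal is correct and follows essentially the same route as the paper: the paper's proof is precisely the composite $(\CE \boxdot_\CE \FZ(\CC)) \boxtimes_\CE (\CE \boxdot_\CE \FZ(\CD)) \simeq \CE \boxdot_\CE (\FZ(\CC) \boxdot_\CE \FZ(\CD)) \simeq \CE \boxdot_\CE \FZ(\CC \boxtimes_\CE \CD)$ via Remark \ref{rem:stacking-centralizer} and Theorem \ref{thm:zfun}, with the identification of $\FZ_{/\CE}$ with $\CE \boxdot_\CE \FZ(-)$ from Remark \ref{rem:centralizer-SET}. Your additional remarks on checking that each link is braided monoidal are sensible but not beyond what the paper itself records.
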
	

\begin{proof}
	Note that, by Remark \ref{rem:stacking-centralizer}, we have the composed braided monoidal equivalence $(\CE \boxdot_\CE \FZ(\CC)) \boxtimes_\CE (\CE \boxdot_\CE \FZ(\CD)) \simeq \CE \boxdot_\CE (\FZ(\CC) \boxdot_\CE \FZ(\CD)) \simeq \CE \boxdot_\CE \FZ(\CC \boxtimes_\CE \CD)$.
\end{proof}

\subsection{An alternative approach to the relative tensor product over $\CE$} \label{sec:rtensor_localm}

	In this subsection, we provide another description of the relative tensor product over $\CE$ in a restricted case. 

	Let $k$ be an algebraically closed field and $\CE$ a symmetric fusion category over $k$.
	Let $\CC$, $\CD$ be two braided multi-tensor categories fully faithfully containing $\CE$. 
	
	Let $R:\CE \to \CE \boxtimes \CE$ be the right adjoint functor of the tensor product functor $\otimes:\CE \boxtimes \CE \to \CE$. Set $L_\CE:=R(\one_\CE)$, which is the canonical algebra in $\CE \boxtimes \CE$ and has a decomposition as $L_\CE=\oplus_{i \in \CO(\CE)}i^L \boxtimes i$. Moreover, if $\CC,\CD$ are braided fusion categories, $L_\CE$ is a condensable algebra in $\CC \boxtimes \CD$. (See \cite{ENO3,DGNO,DMNO,DNO}.) We denote by $\RMod_{L_\CE}^0(\CC \boxtimes \CD)$ the full subcategory of $\RMod_{L_\CE}(\CC \boxtimes \CD)$ formed by local right $L_\CE$-modules in $\CC \boxtimes \CD$.

\begin{prop} \label{prop:rtp-localm}
	The equivalence $\RMod_{L_\CE}(\CC \boxtimes \CD) \simeq \CC \boxtimes_\CE \CD$ restricts to an equivalence
	$$\RMod_{L_\CE}^0(\CC \boxtimes \CD) \simeq \CC\boxdot_\CE \CD.$$
\end{prop}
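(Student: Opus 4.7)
The strategy is to transport the defining condition of $\CC \boxdot_\CE \CD$ through the equivalence $\tilde F: \RMod_{L_\CE}(\CC \boxtimes \CD) \simeq \CC \boxtimes_\CE \CD$ recalled in Remark \ref{rem:tp-m}, and to identify it with the locality condition. Fix an object $(x,\mu_x) \in \RMod_{L_\CE}(\CC \boxtimes \CD)$, and write $\bar x := \tilde F(x) \in \CC \boxtimes_\CE \CD$. The goal is to show that $\bar x$ lies in $\CC \boxdot_\CE \CD$ if and only if $\mu_x = \mu_x \circ c_{L_\CE, x} \circ c_{x, L_\CE}$, i.e.\ $x$ is local.

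Since $\CC$ and $\CD$ are braided and fully faithfully contain $\CE$, each carries the $\CE$-module braiding given by the double braiding in the ambient category (Example \ref{exam:bmccE_br}), and these transport to two $\CE$-module braidings $\tau^\CC, \tau^\CD$ on $\CC \boxtimes_\CE \CD$ via the bifunctor $\boxtimes_\CE$. Under $\tilde F$, $\tau^\CC_{e,\bar x}$ corresponds to the double braiding of $\phi_\CC(e) \boxtimes \one_\CD$ with $x$ in $\CC \boxtimes \CD$, using the $\CC$-side realization $(\phi_\CC(e) \boxtimes \one) \otimes x$ of $e \odot \bar x$; likewise $\tau^\CD_{e,\bar x}$ corresponds to the double braiding of $\one_\CC \boxtimes \phi_\CD(e)$ with $x$, using the $\CD$-side realization $(\one \boxtimes \phi_\CD(e)) \otimes x$. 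The two realizations become identified in $\CC \boxtimes_\CE \CD$ precisely via the $L_\CE$-action: for each simple $i \in \CO(\CE)$, the $(i^L \boxtimes i)$-summand $\mu_{x,i}$, combined with the coevaluation/evaluation of $i$ in $\CE$, furnishes the canonical comparison isomorphism between $(\phi_\CC(i) \boxtimes \one) \otimes x$ and $(\one \boxtimes \phi_\CD(i)) \otimes x$ in $\CC \boxtimes_\CE \CD$.

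By additivity of $\tau^\CC,\tau^\CD$ in $e$ and the decomposition $L_\CE = \oplus_{i \in \CO(\CE)} i^L \boxtimes i$, it suffices to check the equivalence for each simple $i$. Setting up the hexagon that relates $\tau^\CC_{i,\bar x}$ and $\tau^\CD_{i,\bar x}$ through the comparison isomorphism above and using the naturality of the braiding in $\CC \boxtimes \CD$ together with the hexagon axiom, one sees that the equality $\tau^\CC_{i,\bar x} = \tau^\CD_{i,\bar x}$ is equivalent to the identity $\mu_{x,i} = \mu_{x,i} \circ c_{i^L \boxtimes i, x} \circ c_{x, i^L \boxtimes i}$, i.e.\ invariance of the $i$th component of the action under the full monodromy with $i^L \boxtimes i$. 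Summing over all $i$ recovers the locality condition for $x$. The main obstacle is the bookkeeping: verifying that the two a priori distinct endomorphisms $\tau^\CC_{i,\bar x}$ and $\tau^\CD_{i,\bar x}$ become comparable after passing through $\mu_x$ and the duality in $\CE$, and that the resulting hexagon commutes on the nose. Once this diagram is laid out carefully, the equivalence with locality follows formally from naturality of the braiding.
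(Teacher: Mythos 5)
Your proposal follows essentially the same route as the paper's proof: identify $\CC \boxtimes_\CE \CD$ with $\RMod_{L_\CE}(\CC\boxtimes\CD)$ via $\tilde F$, decompose $L_\CE = \oplus_{i}\, i^L\boxtimes i$ over the simples of $\CE$, transport the componentwise locality condition through $\tilde F$, and match it against the condition $\tau^\CC_{i,y}=\tau^\CD_{i,y}$. The one caution is that the final step is not purely ``formal from naturality of the braiding'': after transport the locality of the $i$-th component reads $(\ev_i\otimes\Id_y)\circ\tau^\CC_{i^L,\,i\otimes y}\circ(\Id_{i^L}\otimes\tau^\CD_{i,y})=\ev_i\otimes\Id_y$, and to convert this into $\tau^\CC_{i,y}=\tau^\CD_{i,y}$ one needs the auxiliary identity $(\ev_i\otimes\Id_y)\circ\tau_{i^L,\,i\otimes y}=(\ev_i\otimes\Id_y)\circ(\Id_{i^L}\otimes\tau_{i,y}^{-1})$, valid for any $\CE$-module braiding $\tau$ by the coherence axiom of Definition \ref{def:br} together with naturality applied to $\ev_i$, and then the rigidity of $\CE$ to cancel $\ev_i$ and obtain an equality of endomorphisms of $i\otimes y$.
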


\begin{proof}
	We follow the notations in the diagram \eqref{eq:tp-m}:
	\begin{equation*} 
	\xymatrix@C=48pt{
		\CM \boxtimes \CN \ar[r]^-{- \otimes L_\CC} \ar[d]|-{F:m \boxtimes n \mapsto m \boxtimes_\CC n} & \RMod_{L_\CC}(\CM \boxtimes \CN) \ar[ld]^-{\tilde{F}} \\
		\CM \boxtimes_\CC \CN. \\		
	}
	\end{equation*} 
	
	It is sufficient to show that when restricted to $\RMod_{L_\CE}^0(\CC\boxtimes\CD)$, the essential image of $\tilde{F}$ is exactly $\CC \boxdot_\CE \CD$, i.e. $\tilde{F}(\RMod_{L_\CE}^0(\CC\boxtimes\CD)) \simeq \CC \boxdot_\CE \CD$.
		
	On the one hand, for an object $y \in \CC \boxtimes_\CE \CD$, as we are discussing the fusion case, to judge if $y \in \CC \boxdot_\CE \CD$, it suffices to check the property $\tau^\CC_{j,y} = \tau^\CD_{j,y}$ only for simple objects $j \in \CE$.
	On the other hand, given an object $x \in \RMod_{L_\CE}(\CC\boxtimes\CD)$, we also expect to find what property satisfied by $\tilde{F}(x)$ is equivalent to $x \in \RMod_{L_\CE}^0(\CC\boxtimes\CD)$. For convenience, we denote $y=\tilde{F}(x)$.
	
	Consider the decomposition $L_\CE = \oplus_{j \in \CO(\CE)} j^L \boxtimes j$, where $\CO(\CE)$ denotes the set formed by all isomorphism classes of simple objects in $\CE$. 
	Then, for a $L_\CE$-module $x$ equipped with a $L_\CE$-action $\mu_x: x \odot L_\CE \to x$, there is a decomposition $\mu_x = \oplus_{j \in \CO(\CE)} \mu^j_x$, where $\mu^j_x$ is the morphism from $x \otimes (j^L \boxtimes j)$ to $x$. 
	Moreover, we also have a similar decomposition of double braidings $c_{L_\CE, x} \circ c_{x, L_\CE} = \oplus_{j \in \CO(\CE)} c_{j^L \boxtimes j, x} \circ c_{x, j^L \boxtimes j}$.
	
	This gives an equivalent condition of $x \in \RMod_{L_\CE}^0(\CC\boxtimes\CD)$: the diagram
	$$
	\xymatrix@C=48pt{
		x \otimes (j^L \boxtimes j) \ar[rr]^{c_{j^L \boxtimes j, x} \circ c_{x, j^L \boxtimes j}} \ar[rd]_{\mu^j_x} & & x \otimes (j^L \boxtimes j) \ar[ld]^{\mu^j_x} \\
		& x & \\		
	}
	$$
	commuting for all simple objects $j$ in $\CE$.
	
	Then take the functor $\tilde{F}$. Considering a free module $(c \boxtimes d) \otimes L_\CE$,  we have 
	$$
	\begin{aligned}
	\tilde{F}\left((c \boxtimes d) \otimes L_\CE \otimes (j^L \boxtimes j)\right) &\cong \tilde{F}\left(((c \otimes j^L) \boxtimes (j \otimes d)) \otimes L_\CE\right) \cong (c \otimes j^L) \boxtimes_\CE (j \otimes d) \\
	&\cong (j^L \otimes j) \otimes c \boxtimes_\CE d \cong (j^L \otimes j) \otimes \tilde{F}\left((c \boxtimes d) \otimes L_\CE\right). \\
	\end{aligned}
	$$
	Hence, we get $\tilde{F}(x \otimes(j^L \boxtimes j)) \cong (j^L \otimes j) \otimes y$. Moreover, note that $\tilde{F}$ carries the double braiding $c_{j^L \boxtimes j, x} \circ c_{x, j^L \boxtimes j}$ to $\tau^\CC_{j^L,j \otimes y} \circ (\Id_{j^L} \otimes \tau^\CD_{j,y})$ and the morphism $\mu^j_x$ to $\ev_j \otimes \Id_y$. 
	Thus, we get the following commutative diagram:
	$$
	\xymatrix@C=48pt{
		(j^L \otimes j) \otimes y \ar[r]^{\Id_{j^L} \otimes \tau^\CD_{j,y}} \ar[rd]_{\ev_j \otimes \Id_y} & (j^L \otimes j) \otimes y \ar[r]^{\tau^\CC_{j^L,j \otimes y}} & (j^L \otimes j) \otimes y \ar[ld]^{\ev_j \otimes \Id_y} \\
		& y & .\\		
	}
	$$
	In other words,
	\begin{equation} \label{111}
	(\ev_j \otimes \Id_y) \circ (\Id_{j^L} \otimes (\tau^\CD_{j,y})^{-1}) = (\ev_j \otimes \Id_y) \circ \tau^\CC_{j^L,j \otimes y}.
	\end{equation} 
	
	Note that the equation \eqref{111} holding for all simple objects $j \in \CO(\CE)$ is a equivalent condition of $x \in \RMod_{L_\CE}^0(\CC\boxtimes\CD)$, since $\tilde{F}$ is fully faithful.
	
	We remind readers that there is a commutative diagram by the naturality of $\tau$:
	$$
	\xymatrix@C=48pt{
		(j^L \otimes j) \otimes y \ar[r]^{\Id_{j^L} \otimes \tau_{j,y}} \ar[d]_{\ev_j \otimes \Id_y} & (j^L \otimes j) \otimes y \ar[r]^{\tau_{j^L,j \otimes y}} & (j^L \otimes j) \otimes y \ar[d]^{\ev_j \otimes \Id_y} \\
		\one_\CE \otimes y \ar[rr]^{\tau_{\one_\CE,y}=\Id_{\one_\CE \otimes y}} & & \one_\CE \otimes y.\\		
	}
	$$	
	Similarly, we get that
	\begin{equation} \label{222}
	(\ev_j \otimes \Id_y) \circ (\Id_{j^L} \otimes (\tau_{j,y})^{-1}) = (\ev_j \otimes \Id_y) \circ \tau_{j^L,j \otimes y}.
	\end{equation} 	

	Take the braiding in the equation \eqref{222} as the one induced by the braiding in $\CC$ and consider the equation \eqref{111}.
	Then, we get 
	\begin{equation} \label{333}
	(\ev_j \otimes \Id_y) \circ (\Id_{j^L} \otimes (\tau^\CC_{j,y})^{-1}) = (\ev_j \otimes \Id_y) \circ (\Id_{j^L} \otimes (\tau^\CD_{j,y})^{-1}).
	\end{equation} 
	Note that the equation \eqref{333} is equivalent to $\tau^\CC_{j,y} = \tau^\CD_{j,y}$ due to the properties of the dual objects.
	Then, we get that $\{\tau^\CC_{j,y} = \tau^\CD_{j,y}\}_{\forall j \in \CO(\CE)}$ is also a equivalent condition of $x \in \RMod_{L_\CE}^0(\CC\boxtimes\CD)$, which completes the proof.
\end{proof}

\begin{rem}
	Proposition \ref{prop:rtp-localm} and Remark \ref{rem:centralizer-SET} directly imply that our definition of the relative tensor product over $\CE$ coincides with the one in \cite{LKW1}.
\end{rem}


\section{Symmetry enriched center functor} \label{sec:SE-center}

	
	In this section, we assume $k$ is an algebraically closed field and let $\CE$ be a symmetric fusion category over $k$.
	
\subsection{Functoriality of the symmetry enriched center}
	In this subsection, we assume that the global dimension of $\CE$ does not vanish. Equivalently, the Drinfeld center of $\CE$ is semisimple (see \cite{KZ2}). When $\chara k=0$, this condition is redundant (see \cite{ENO1}).

\begin{lem} \label{lem:Rmod-ss}
	Let $G$ be a finite group whose order is not divided by $\chara k$. Let $A$ be an algebra in $\Rep G$. If $A$ is a semisimple $k$-algebra, then $\RMod_A(\Rep G)$ is semisimple.
\end{lem}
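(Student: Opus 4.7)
The plan is to establish semisimplicity of $\RMod_A(\Rep G)$ by showing that every monomorphism in the category admits a retraction, via an averaging argument that uses the two hypotheses in a symmetric way: the semisimplicity of $A$ as an ungraded $k$-algebra gives a retraction after forgetting the $G$-action, and the invertibility of $|G|$ in $k$ lets us average back to obtain a $G$-equivariant one.

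First I would unpack the category. An object of $\RMod_A(\Rep G)$ is a $k$-vector space $M$ equipped with a $G$-representation and a right $A$-action $\mu: M \otimes_k A \to M$ that is $G$-equivariant, i.e.\ satisfies $g(m \cdot a) = g(m) \cdot g(a)$; morphisms are $G$-equivariant right $A$-linear maps. (Equivalently one may identify $\RMod_A(\Rep G)$ with right modules over the smash product $A \# kG$, but this will not be needed.) Since $A$ is a finite-dimensional algebra over $k$ and $G$ is finite, $\RMod_A(\Rep G)$ is a finite $k$-linear abelian category in the sense of the introduction, so it only remains to verify that every short exact sequence splits.

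Given a subobject $N \hookrightarrow M$ in $\RMod_A(\Rep G)$, by hypothesis on $A$ there exists a right $A$-linear retraction $p: M \to N$ in $\RMod_A(\bk)$; in general $p$ is not $G$-equivariant. Using that $|G|$ is invertible in $k$, set
$$\bar p(m) \;=\; \frac{1}{|G|} \sum_{g\in G} g \cdot p(g^{-1} \cdot m).$$
A change of summation variable $g \mapsto hg$ shows $\bar p(h\cdot m) = h \cdot \bar p(m)$, so $\bar p$ is $G$-equivariant. For $A$-linearity, compute
$$\bar p(m\cdot a) = \tfrac{1}{|G|}\sum_g g\cdot p(g^{-1}(m)\cdot g^{-1}(a)) = \tfrac{1}{|G|}\sum_g g\bigl(p(g^{-1}(m))\cdot g^{-1}(a)\bigr) = \bar p(m)\cdot a,$$
where the first equality uses that the $A$-action on $M$ is $G$-equivariant, the second uses $A$-linearity of $p$, and the third uses the equivariance of the $A$-action applied in reverse. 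Finally, for $n \in N$ we have $g^{-1}\cdot n \in N$ (since $N$ is $G$-stable) and hence $p(g^{-1}\cdot n) = g^{-1}\cdot n$, which gives $\bar p(n) = n$.

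Thus $\bar p$ is a retraction of $N \hookrightarrow M$ inside $\RMod_A(\Rep G)$, so every subobject is a direct summand, proving semisimplicity. The only delicate step is the verification that the averaged retraction remains $A$-linear; this is exactly where the defining compatibility $g(m\cdot a) = g(m) \cdot g(a)$ for an algebra in $\Rep G$ is used, and it is the natural content of the argument rather than a serious obstacle.
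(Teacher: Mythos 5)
Your proof is correct and follows essentially the same route as the paper: use semisimplicity of $A$ to produce an $A$-linear projection onto the subobject, then average over $G$ (using $|G|\in k^\times$) to make it $G$-equivariant, checking that $A$-linearity survives the averaging via the compatibility $g(m\cdot a)=g(m)\cdot g(a)$. The only cosmetic difference is that the paper takes the kernel of the averaged projection as the complement, while you phrase the conclusion in terms of the retraction splitting the inclusion.
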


\begin{proof}
	An algebra $A$ in $\Rep G$ can be seen as an algebra $A$ in $k$ with a $G$-action, i.e. for any $g \in G$, $g:A \to A$ is an algebra homomorphism. An object $V$ in $\RMod_A(\Rep G)$ can be seen as an object in $\RMod_A(k)$ with a $G$-action, i.e. $g \circ a = g(a) \circ g$ for any $g \in G$ and $a \in A$.
	Then, a subobject $W$ of $V$ in $\RMod_A(\Rep G)$ is a right $A$-submodule stable under $G$-action.
	
	To prove the conclusion, it is equivalent to showing that for any subobject $W$ of $V$ in $\RMod_A(\Rep G)$, there exists a subobject $U$ of $V$ such that $V = W \oplus U$.
	We see $V$ as a right $A$-module over $k$ and $W$ as the submodule of $V$. Since $A$ is a semisimple $k$-algebra, we have $V = W \oplus U'$ for some right $A$-module $U'$. Let $p$ be the corresponding projection of $V$ onto $W$ as right $A$-modules. Then, we have $a \circ p = p \circ a$ for any $a \in A$.
	Form the average $p^0$ of the conjugates of $p$ by the elements of $G$:
	$$p^0=\frac{1}{|G|}\sum_{g \in G} g \circ p \circ g^{-1}.$$
	Then, we have $p^0$ is a projection of $V$ onto $W$ as well, since $p$ maps $V$ onto $W$ and $g$ preserves $W$ and for $x \in W$ we have $p^0 \ x = x$. 
	And we have $a \circ p^0 = p^0 \circ a$ for any $a \in A$, because of $g \circ p \circ g^{-1} \circ a = g \circ p \circ g^{-1}(a) \circ g^{-1} = g \circ g^{-1}(a) \circ p \circ g^{-1} = a \circ g \circ p \circ g^{-1}$ for any $g \in G$.
	Moreover, we have $g \circ p^0 = p^0 \circ g$ for any $g \in G$.
	Let $U = \ker p^0$. This shows $U$ is a right $A$-submodule stable under $G$-action, and $V = W \oplus U$, and completes the proof.
\end{proof}

\begin{lem} \label{lem:ss}
	Let $\CC $ be a finite $\CE$-module, and let $\CC^\sharp$ be the full subcategory of $\CC$ formed by the semisimple objects. Then $\CC^\sharp$ is an $\CE$-submodule. 
\end{lem}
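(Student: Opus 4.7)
The plan is to use Proposition \ref{prop:rec-mod} to present $\CC \simeq \RMod_A(\CE)$ for some algebra $A \in \CE$, and then to identify $\CC^\sharp$ with $\RMod_{A/J}(\CE)$, where $J$ is the Jacobson radical of $A$ viewed as a finite-dimensional $k$-algebra. Stability of $\CC^\sharp$ under the $\CE$-action will then follow immediately, because under this reconstruction the $\CE$-action is by \emph{left} tensor product and therefore commutes with the right $A$-action, so it preserves the property of being annihilated by $J$.

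First I reduce the problem: since the action $\odot : \CE \times \CC \to \CC$ is $k$-bilinear and $\CC^\sharp$ is closed under finite direct sums, it suffices to prove $e \odot x \in \CC^\sharp$ whenever $e \in \CE$ and $x \in \CC$ are both simple. By Proposition \ref{prop:rec-mod} we identify $\CC \simeq \RMod_A(\CE)$ for some algebra $A \in \CE$, under which the $\CE$-action becomes $(e, M) \mapsto e \otimes M$, commuting with the right $A$-action.

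Next define $J \subseteq A$ to be the Jacobson radical of $A$ as a $k$-algebra. Using the structure theorem for symmetric fusion categories with non-vanishing global dimension (Deligne's theorem and its positive-characteristic extensions), one identifies $\CE \simeq \Rep(G, z)$ for a suitable finite (super)group $G$ with $\chara k \nmid |G|$. Then $A$ carries a $G$-action by algebra automorphisms, $J$ is $G$-stable because it is a characteristic ideal, and hence $J$ promotes to a two-sided ideal of $A$ in $\CE$. The quotient $B := A/J$ is therefore a semisimple $k$-algebra in $\CE$, so by Lemma \ref{lem:Rmod-ss}, $\RMod_B(\CE)$ is a semisimple category, and pullback along $A \twoheadrightarrow B$ embeds it fully faithfully into $\CC$.

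The main step is the identification $\CC^\sharp \simeq \RMod_B(\CE)$. For $\RMod_B(\CE) \subseteq \CC^\sharp$: any $A$-module annihilated by $J$ is a $B$-module and hence semisimple in $\RMod_B(\CE)$; since every $A$-subobject of a $J$-annihilated module is again $J$-annihilated, $A$-subobjects and $B$-subobjects coincide, so the module is also semisimple in $\CC$. For the reverse inclusion: if $M$ is simple in $\CC$, then $JM$ (the image of $J \otimes M \to A \otimes M \to M$) is an $A$-subobject of $M$, hence either $0$ or all of $M$; nilpotence of $J$ excludes $JM = M$, so $JM = 0$, and additivity extends this to every semisimple $M$. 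Since the left $\CE$-action commutes with the right $A$-action, it preserves annihilation by $J$, and therefore $\CC^\sharp \simeq \RMod_B(\CE)$ is indeed $\CE$-stable. The only substantive obstacle is the stability of the classical Jacobson radical $J$ under the $\CE$-structure on $A$; appealing to the classification of $\CE$ reduces this to the standard characteristic-ideal argument, but one could alternatively define $J$ intrinsically as the largest nilpotent ideal of $A$ in $\CE$ and verify the stability directly from the internal algebra structure, thereby avoiding any appeal to a fiber functor.
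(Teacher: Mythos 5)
Your proof is correct and follows essentially the same route as the paper: reconstruct $\CC$ as $\RMod_A(\CE)$, use Deligne's classification to view $A$ as a $k$-algebra with a $G$-action so that the radical $J$ is a $G$-stable ideal of $A$ in $\CE$, and identify $\CC^\sharp$ with $\RMod_{A/J}(\CE)$ via Lemma \ref{lem:Rmod-ss} together with the observation that the radical annihilates every semisimple module. The extra details you supply (nilpotence of $J$ ruling out $JM=M$, and the closing remark on defining $J$ intrinsically to avoid the fiber functor) only make explicit what the paper's proof leaves implicit.
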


\begin{proof}
	According to \cite[Corollaries 0.7 and 0.8]{De2} for $\chara k=0$ and \cite[Theorem 5.4]{EG} for $\chara k>0$, $\CE$ is equivalent to the representation category $\Rep G$ of a finite group $G$ over $k$, possibly with a modified symmetric structure. Moreover, $\chara k$ does not divide the order of $G$ (i.e. $\dim(\CE) \ne 0$). We may simply assume $\CE=\Rep G$.
	
	Suppose $\CC=\RMod_A(\CE)$ where $A$ is an algebra in $\CE$ i.e. a $k$-algebra with a $G$-action. Let $A'=A/R(A)$ where $R(A)$ is the radical of $A$. Then $A'$ is also an algebra in $\CE$. According to Lemma \ref{lem:Rmod-ss}, $\RMod_{A'}(\CE)\subset\CC^\sharp$. Conversely, if $x$ is a simple (or semisimple) right $A$-module in $\CE$, then $x R(A)$ as a proper submodule of $x$ in $\CE$ has to vanish, which means $x$ is a right $A'$-module in $\CE$. Therefore, $\CC^\sharp \subset \RMod_{A'}(\CE)$. This shows that $\CC^\sharp = \RMod_{A'}(\CE)$, which leads to that $\CC^\sharp$ is a left $\CE$-submodule of $\CC$. 
	
\end{proof}

\begin{cor} \label{cor:ss}
	Let $\CC,\CD$ be finite $\CE$-modules, and let $\CC^\sharp,\CD^\sharp$ be the full subcategory of $\CC,\CD$ formed by the semisimple objects, respectively. Then $\CC^\sharp\boxtimes_\CE\CD^\sharp$ is a semisimple full subcategory of $\CC\boxtimes_\CE\CD$.
\end{cor}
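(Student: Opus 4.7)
The plan is to follow the reconstruction strategy already used in Lemmas \ref{lem:Rmod-ss} and \ref{lem:ss}. First, by Proposition \ref{prop:rec-mod} I would write $\CC \simeq \RMod_A(\CE)$ and $\CD \simeq \RMod_B(\CE)$ for algebras $A, B$ in $\CE$. The proof of Lemma \ref{lem:ss} then realizes $\CC^\sharp \simeq \RMod_{A'}(\CE)$ and $\CD^\sharp \simeq \RMod_{B'}(\CE)$, where $A' = A/R(A)$ and $B' = B/R(B)$ are algebras in $\CE$ whose underlying $k$-algebras are semisimple. Proposition \ref{prop:rec-tp} then identifies
\begin{equation*}
\CC^\sharp \boxtimes_\CE \CD^\sharp \simeq \BMod_{A'|B'}(\CE), \qquad \CC \boxtimes_\CE \CD \simeq \BMod_{A|B}(\CE),
\end{equation*}
and under these identifications the canonical functor $\CC^\sharp \boxtimes_\CE \CD^\sharp \to \CC \boxtimes_\CE \CD$ becomes restriction of scalars along the surjections $A \twoheadrightarrow A'$ and $B \twoheadrightarrow B'$. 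Because these algebra maps are epimorphisms, this restriction functor is fully faithful, which exhibits $\CC^\sharp \boxtimes_\CE \CD^\sharp$ as a full subcategory of $\CC \boxtimes_\CE \CD$.

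It remains to show that $\BMod_{A'|B'}(\CE)$ is semisimple. Using the symmetry $\beta$ of $\CE$, the data of an $A'$-$B'$-bimodule in $\CE$ is equivalent to that of a right $C$-module for an algebra $C$ in $\CE$ whose underlying object (with diagonal group action after the Deligne identification below) has underlying $k$-algebra $A'^{\op} \otimes_k B'$; hence $\BMod_{A'|B'}(\CE) \simeq \RMod_C(\CE)$. Repeating the Deligne-embedding step from the proof of Lemma \ref{lem:ss}, I would identify $\CE \simeq \Rep G$ for a finite group $G$ whose order is coprime to $\chara k$ (guaranteed by our standing assumption that the global dimension of $\CE$ does not vanish). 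Since $A'$ and $B'$ are semisimple as finite-dimensional $k$-algebras over the algebraically closed (hence perfect) field $k$, each is a product of matrix algebras over $k$, so $A'^{\op} \otimes_k B'$ is again a product of matrix algebras and in particular semisimple as a $k$-algebra. Lemma \ref{lem:Rmod-ss} then yields that $\RMod_C(\Rep G)$ is semisimple, finishing the argument.

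There is no serious obstacle here; the proof is essentially a chain of applications of the preceding results. The only genuine bookkeeping point is to confirm that the restriction-of-scalars functor really coincides with the canonical comparison map $\CC^\sharp \boxtimes_\CE \CD^\sharp \to \CC \boxtimes_\CE \CD$ under the reconstruction of Proposition \ref{prop:rec-tp}, so that \textit{full subcategory} holds literally and not merely on the level of essential images. This compatibility is immediate from the naturality of that reconstruction in the pair of algebras.
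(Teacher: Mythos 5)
Your argument is correct, but it is not the route the paper takes. The paper's proof is a one-liner: it identifies $\CC^\sharp \boxtimes_\CE \CD^\sharp$ with the functor category $\Fun_\CE({\CC^{\sharp}}^{\op|L},\CD^\sharp)$ and then invokes \cite[Theorem 2.18]{ENO1}, which asserts that the category of module functors between semisimple module categories over a fusion category (of nonzero global dimension) is semisimple. You instead unwind everything through the algebra reconstruction: $\CC^\sharp \simeq \RMod_{A'}(\CE)$, $\CD^\sharp \simeq \RMod_{B'}(\CE)$ with $A',B'$ semisimple, so $\CC^\sharp\boxtimes_\CE\CD^\sharp \simeq \BMod_{A'|B'}(\CE) \simeq \RMod_{A'^{\op}\otimes_k B'}(\CE)$, and then apply the Maschke-type averaging of Lemma \ref{lem:Rmod-ss} after passing to $\CE\simeq\Rep G$. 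Your approach is more self-contained (it reuses only the lemmas already proved in this subsection rather than importing the ENO machinery), and it has the additional merit of explicitly verifying the \emph{full subcategory} claim via full faithfulness of restriction of scalars along the surjections $A\twoheadrightarrow A'$, $B\twoheadrightarrow B'$ --- a point the paper's proof leaves implicit. The two small hypotheses you rely on are both available and correctly flagged: semisimplicity of $A'^{\op}\otimes_k B'$ needs separability of $A'$ and $B'$, which holds because $k$ is algebraically closed, and the passage to $\Rep G$ with $\chara k \nmid |G|$ is exactly the standing nonvanishing-global-dimension assumption already used in Lemma \ref{lem:ss}. The price of your route is length and the bookkeeping you yourself identify (matching restriction of scalars with the canonical comparison functor under Proposition \ref{prop:rec-tp}), which is routine from the universal property of the balanced tensor product.
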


\begin{proof}
	According to \cite[Theorem 2.18]{ENO1}, $\CC^\sharp \boxtimes_\CE \CD^\sharp \simeq \Fun_\CE({\CC^{\sharp}}^{\op|L},\CD^\sharp)$ is semisimple.
\end{proof}

\begin{cor} \label{cor:ffb}
	Let $\CC,\CD$ be finite braided monoidal categories fully faithful containing $\CE$. Then $\CC\boxdot_\CE\CD$ is also fully faithful containing $\CE$.
\end{cor}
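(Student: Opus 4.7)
The plan is to construct the canonical structural functor $\phi\colon\CE\to\CC\boxdot_\CE\CD$ by $e\mapsto\phi_\CC(e)\boxdot_\CE\one_\CD\simeq\one_\CC\boxdot_\CE\phi_\CD(e)$ and to verify three things: (1) the formula indeed lands in the full subcategory $\CC\boxdot_\CE\CD\subset\CC\boxtimes_\CE\CD$; (2) it is fully faithful; (3) it carries simple objects to simple objects. Because $\CC\boxdot_\CE\CD$ is a full subcategory of $\CC\boxtimes_\CE\CD$, Hom-computations and simplicity tests may be performed in the larger ambient category.

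First I would settle (1) using Example \ref{exam:bmccE_br}, which identifies the $\CE$-module braiding on a braided monoidal category containing $\CE$ with the double braiding. Since $\phi_\CC$ is a braided monoidal functor and $\CE$ is symmetric,
\[
\tau^\CC_{e',\phi_\CC(e)} \;=\; c_{\phi_\CC(e),\phi_\CC(e')}\circ c_{\phi_\CC(e'),\phi_\CC(e)} \;=\; \phi_\CC(\beta_{e,e'}\circ\beta_{e',e}) \;=\; \Id,
\]
and the same computation on the $\CD$-side gives $\tau^\CD_{e',\phi_\CD(e)}=\Id$. Tensoring with $\one_\CD$ (resp.\ $\one_\CC$) preserves identities, so both $\CE$-module braidings on $\phi(e)$ are trivial and coincide, placing $\phi(e)$ in $\CC\boxdot_\CE\CD$.

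For (2) I would invoke Remark \ref{rem:tp-m} to identify $\CC\boxtimes_\CE\CD\simeq\RMod_{L_\CE}(\CC\boxtimes\CD)$, where $L_\CE=\bigoplus_{i\in\CO(\CE)}i^L\boxtimes i$ acts via $\phi_\CC\boxtimes\phi_\CD$. Under this identification, $\phi(e)$ becomes the free module
\[
(\phi_\CC(e)\boxtimes\one_\CD)\otimes L_\CE \;\simeq\; \bigoplus_{i}\phi_\CC(e\otimes i^L)\boxtimes\phi_\CD(i).
\]
Combining the free–forgetful adjunction with the usual Hom formula for the ordinary Deligne tensor product yields
\[
\Hom_{\CC\boxtimes_\CE\CD}(\phi(e),\phi(e')) \;\simeq\; \bigoplus_{i}\Hom_\CC(\phi_\CC(e),\phi_\CC(e'\otimes i^L))\otimes_k\Hom_\CD(\one_\CD,\phi_\CD(i)).
\]
Fully faithfulness of $\phi_\CC$ and $\phi_\CD$ rewrites this as $\bigoplus_i\Hom_\CE(e,e'\otimes i^L)\otimes_k\Hom_\CE(\one_\CE,i)$. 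Since $\one_\CE$ is simple, only $i\simeq\one_\CE$ contributes, and the sum collapses to $\Hom_\CE(e,e')$, proving fully faithfulness.

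Finally, for (3), assume $e$ simple. Then $\phi_\CC(e)$ is simple by hypothesis, so $\phi_\CC(e)\in\CC^\sharp$, and $\one_\CD\in\CD^\sharp$ since $\CD$ is fully faithful over $\CE$ and $\one_\CE$ is simple. By Corollary \ref{cor:ss}, $\phi(e)$ lies in $\CC^\sharp\boxtimes_\CE\CD^\sharp$, a semisimple full subcategory of $\CC\boxtimes_\CE\CD$. Step (2) gives $\End(\phi(e))\simeq\End_\CE(e)=k$; in a semisimple $k$-linear category over an algebraically closed field, an object with endomorphism algebra $k$ is simple. Hence $\phi(e)$ is simple in $\CC^\sharp\boxtimes_\CE\CD^\sharp$, and therefore in $\CC\boxtimes_\CE\CD$ and in its full subcategory $\CC\boxdot_\CE\CD$. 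The main potential obstacle is choosing a model of $\CC\boxtimes_\CE\CD$ in which the Hom is directly computable; the free $L_\CE$-module presentation does this cleanly, with the crucial simplification being the collapse $\Hom_\CE(\one_\CE,i)\neq 0\iff i\simeq\one_\CE$, which exploits both the simplicity of $\one_\CE$ and the full embeddings $\phi_\CC,\phi_\CD$.
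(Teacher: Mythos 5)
Your argument is correct, and it constructs the same canonical functor $e\mapsto\phi_\CC(e)\boxdot_\CE\one_\CD$ that the paper uses; the difference lies in how much is actually verified. The paper's proof is a one-line assertion that the composite $\CE\simeq\CE\boxdot_\CE\CE\xrightarrow{\phi_\CC\boxdot_\CE\phi_\CD}\CC\boxdot_\CE\CD$ inherits fully-faithfulness and preservation of simple objects from $\phi_\CC$ and $\phi_\CD$, leaving to the reader the non-obvious check that these properties survive the (relative) tensor product. You supply that check explicitly: the double-braiding computation showing that $\phi_\CC(e)\boxtimes_\CE\one_\CD$ already lies in $\CC\boxdot_\CE\CD$ (so no maximal quotient is lost), the Hom computation in the model $\RMod_{L_\CE}(\CC\boxtimes\CD)$ with the collapse to the $i\simeq\one_\CE$ summand of $L_\CE$, and the simplicity argument via the semisimple subcategory $\CC^\sharp\boxtimes_\CE\CD^\sharp$ of Corollary \ref{cor:ss}. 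Two small points to tidy: the Hom-space identification should be observed to be the map induced by the functor itself (it is, since under the free--forgetful adjunction a morphism $\phi(f)$ corresponds to postcomposition with the unit $\one\to L_\CE$ of the canonical algebra, which picks out exactly the $i=\one_\CE$ summand); and in step (3) the implication ``$\End(\phi(e))=k$ implies simple'' requires $\phi(e)$ to be a semisimple \emph{object} of the ambient category, not merely an object of a subcategory that is semisimple as an abstract category --- this is exactly how the paper itself reads Corollary \ref{cor:ss} in the proof of Theorem \ref{thm:rigid}, so your use is consistent, but it is worth stating. Your route is longer but self-contained, and it supplies justification that the paper's terse proof omits.
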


\begin{proof}
	By definition, $\phi_\CC:\CE\to\CC$ and $\phi_\CD:\CE\to\CD$ are fully faithful and carry simple objects to simple objects. Then, we have the composed braided monoidal functor $\CE \simeq \CE \boxdot_\CE \CE \xrightarrow{\phi_\CC \boxdot_\CE \phi_\CD} \CC \boxdot_\CE \CD$ is fully faithful and carries simple objects to simple objects.
\end{proof}

\begin{cor} \label{cor:ffm}
	Let $\CC,\CD$ be finite monoidal categories fully faithful over $\CE$. Then $\CC\boxtimes_\CE\CD$ is also fully faithful over $\CE$. 
\end{cor}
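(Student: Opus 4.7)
The plan is to reduce the statement to the braided case already handled in Corollary \ref{cor:ffb}, by passing to Drinfeld centers and invoking the multiplicativity established in Theorem \ref{thm:zfun}.

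First, I would unwind the definitions. To say that $\CC$ (respectively $\CD$) is fully faithful over $\CE$ means that the structure functor $\phi_\CC : \CE \to \FZ_1(\CC)$ (respectively $\phi_\CD : \CE \to \FZ_1(\CD)$) is fully faithful and carries simple objects to simple objects. In particular, the Drinfeld centers $\FZ_1(\CC)$ and $\FZ_1(\CD)$, equipped with $\phi_\CC$ and $\phi_\CD$, become finite braided monoidal categories fully faithful containing $\CE$, as in Definition \ref{def:bmccE}. Thus Corollary \ref{cor:ffb} applies to them and yields that $\FZ_1(\CC) \boxdot_\CE \FZ_1(\CD)$ is fully faithful containing $\CE$, with the canonical functor $\phi_\CC \boxdot_\CE \phi_\CD : \CE \simeq \CE \boxdot_\CE \CE \to \FZ_1(\CC) \boxdot_\CE \FZ_1(\CD)$ being fully faithful and preserving simple objects.

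Next, I would transport this across the braided monoidal equivalence containing $\CE$ supplied by equation \eqref{eq:center-monoidal} of Theorem \ref{thm:zfun}, namely
\begin{equation*}
\FZ_1(\CC) \boxdot_\CE \FZ_1(\CD) \simeq \FZ_1(\CC \boxtimes_\CE \CD).
\end{equation*}
Because this equivalence is one of braided monoidal categories containing $\CE$, it intertwines the two inclusions of $\CE$: the composition of $\phi_\CC \boxdot_\CE \phi_\CD$ with the above equivalence is naturally isomorphic, as a braided monoidal functor, to the structure map $\phi_{\CC \boxtimes_\CE \CD} : \CE \to \FZ_1(\CC \boxtimes_\CE \CD)$. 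Since the former is fully faithful and simple-object-preserving, so is the latter. By Definition \ref{def:mc_E} this precisely says that $\CC \boxtimes_\CE \CD$ is fully faithful over $\CE$.

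There is essentially no main obstacle here, since the heavy lifting has been absorbed into Corollary \ref{cor:ffb} (which encapsulates the semisimplicity and fully-faithfulness analysis via Lemmas \ref{lem:Rmod-ss}--\ref{lem:ss} and Corollary \ref{cor:ss}) and into Theorem \ref{thm:zfun} (which identifies the center of a relative Deligne tensor product). The only mildly delicate point to verify is that the equivalence \eqref{eq:center-monoidal} genuinely carries $\phi_\CC \boxdot_\CE \phi_\CD$ to $\phi_{\CC \boxtimes_\CE \CD}$, but this is built into the meaning of ``braided monoidal equivalence containing $\CE$'' as spelled out after Definition \ref{def:bmfE} and follows by tracing the construction in Theorem \ref{thm:zfun} on objects of the form $\phi_\CC(e) \boxdot_\CE \one \simeq \one \boxdot_\CE \phi_\CD(e)$.
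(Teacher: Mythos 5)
Your proof is correct and follows exactly the paper's argument: the paper's own proof is the one-line observation that $\FZ(\CC\boxtimes_\CE\CD)$ is fully faithful containing $\CE$ by Corollary \ref{cor:ffb} applied to $\FZ(\CC)\boxdot_\CE\FZ(\CD)$ together with the equivalence of Theorem \ref{thm:zfun}. You have simply spelled out the same reduction in more detail, including the (correct) point that the equivalence \eqref{eq:center-monoidal} intertwines the structure functors from $\CE$.
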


\begin{proof}
	$\FZ(\CC\boxtimes_\CE\CD)$ is fully faithful containing $\CE$ by Corollary \ref{cor:ffb} and Theorem \ref{thm:zfun}.
\end{proof}

\begin{thm} \label{thm:rigid}
	Let $\CC,\CD$ be multi-tensor categories over $\CE$. Then $\CC\boxtimes_\CE\CD$ is rigid.
\end{thm}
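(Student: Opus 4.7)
The plan is to construct left and right duals in $\CC \boxtimes_\CE \CD$ in two stages: first for ``pure tensors'' of the form $c \boxtimes_\CE d$, and then extending to a general object using Corollary \ref{cor:obj-tp-quo}.

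For a pure tensor $c \boxtimes_\CE d$ with $c \in \CC$ and $d \in \CD$, the natural candidate for a left dual is $c^L \boxtimes_\CE d^L$, using the duals provided by rigidity of $\CC$ and $\CD$. The evaluation map
\[
(c^L \boxtimes_\CE d^L) \otimes (c \boxtimes_\CE d) \simeq (c^L \otimes c) \boxtimes_\CE (d^L \otimes d) \xrightarrow{\ev_c \boxtimes_\CE \ev_d} \one_\CC \boxtimes_\CE \one_\CD
\]
and the analogous coevaluation satisfy the triangle identities because they do so in each factor separately; the right dual works similarly with $c^R \boxtimes_\CE d^R$.

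For a general $z \in \CC \boxtimes_\CE \CD$, Corollary \ref{cor:obj-tp-quo} presents $z$ as the cokernel of a morphism $\phi: m' \boxtimes_\CE n' \to m \boxtimes_\CE n$. I would define $z^L := \ker(\phi^L)$, where $\phi^L: m^L \boxtimes_\CE n^L \to (m')^L \boxtimes_\CE (n')^L$ is obtained by applying the duality for pure tensors; independence of the presentation will follow once duality is established. The evaluation $z^L \otimes z \to \one_{\CC \boxtimes_\CE \CD}$ is then constructed by tensoring the right exact presentation of $z$ on the left by $z^L$, comparing with the left exact sequence defining $z^L$ tensored with $z$ on the right, and assembling via the evaluations for the pure tensors $m \boxtimes_\CE n$ and $m' \boxtimes_\CE n'$; coevaluation is dual. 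The triangle identities for $z$ then reduce to those for pure tensors.

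The main obstacle will be establishing biexactness of $\otimes$ on $\CC \boxtimes_\CE \CD$: right exactness is automatic, but left exactness is what makes the kernel-based construction of $z^L$ behave well. I would derive this from biexactness of $\otimes_\CC$ and $\otimes_\CD$ (a consequence of their rigidity) by working first in $\CC \boxtimes \CD$, where Deligne tensor products of biexact functors are biexact, and then descending along the identification $\CC \boxtimes_\CE \CD \simeq \RMod_{L_\CE}(\CC \boxtimes \CD)$ from Remark \ref{rem:tp-m}. A conceptually slicker alternative uses this same identification to transfer rigidity directly from the multi-tensor category $\CC \boxtimes \CD$ to $\RMod_{L_\CE}$: since $\CE$ is a symmetric fusion category, $L_\CE = \bigoplus_{i \in \CO(\CE)} i^L \boxtimes i$ is a commutative separable (condensable) algebra and the induced monoidal structure on $\RMod_{L_\CE}(\CC \boxtimes \CD)$ is $\otimes_{L_\CE}$. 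For such an algebra $A$ in a rigid monoidal category, $\RMod_A$ inherits rigidity, with $M \mapsto M^L$ defined via the dual in the ambient category equipped with the $L_\CE$-action descended through commutativity, and evaluation and coevaluation descending from those in $\CC \boxtimes \CD$ to $\otimes_{L_\CE}$ via separability.
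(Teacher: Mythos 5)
Your proposal is essentially correct, but it takes a genuinely different route from the paper. The paper's proof is two lines: by Corollary \ref{cor:ss} the object $a\boxtimes_\CE b$ is semisimple for all simple $a\in\CC$, $b\in\CD$ (this is where the hypotheses on $\CE$ enter, via $\CE\simeq\Rep G$ and the averaging argument of Lemma \ref{lem:Rmod-ss}), and it then invokes the rigidity criterion \cite[Theorem 2.7.5]{KZ1}, whose hypothesis is exactly that semisimplicity. Your argument instead works through the identification $\CC\boxtimes_\CE\CD\simeq\RMod_{L_\CE}(\CC\boxtimes\CD)$ of Remark \ref{rem:tp-m} and exploits that $L_\CE$ is a commutative \emph{separable} algebra in the Drinfeld center of the multi-tensor category $\CC\boxtimes\CD$: the free-module functor $w\mapsto w\otimes L_\CE$ is monoidal, hence carries rigid objects to rigid objects, and separability makes every $L_\CE$-module a direct summand of the free module on its underlying object, so every object of $\CC\boxtimes_\CE\CD$ is rigid. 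What the paper's route buys is brevity (it outsources the hard work to \cite{KZ1}); what yours buys is a self-contained argument that isolates the real mechanism --- the same mechanism, in a different guise, since separability of $L_\CE$ and semisimplicity of $a\boxtimes_\CE b$ both come from $\CE$ being fusion with $\FZ(\CE)$ semisimple.

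Two cautions. First, your primary plan (extending duals from pure tensors to a general $z$ presented as a cokernel via Corollary \ref{cor:obj-tp-quo}) understates the difficulty of the final step: the triangle identities for the kernel-defined candidate $z^L=\ker(\phi^L)$ do not ``reduce to those for pure tensors'' formally; they require a diagram chase that uses left exactness of $-\otimes-$ at several points (e.g.\ to know $z\otimes\ker(\phi^L)=\ker(z\otimes\phi^L)$ and that $z\otimes z^L\to z\otimes x^L$ is monic). The chase does go through once biexactness is in hand, but biexactness is not obtainable by merely ``descending'' the biexact tensor product of $\CC\boxtimes\CD$: the relative product $\otimes_{L_\CE}$ is a coequalizer, and its exactness again needs the separability of $L_\CE$. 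So both of your routes ultimately rest on the same input, and the second one reaches the conclusion with less work. Second, that input is not free: separability of $L_\CE$ requires $\FZ(\CE)$ to be semisimple (equivalently $\dim\CE\neq0$), which is the standing assumption of this subsection; it is automatic in characteristic zero but must be assumed in positive characteristic, so it should be cited rather than attributed only to ``$\CE$ is a symmetric fusion category.''
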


\begin{proof}
	According to Corollary \ref{cor:ss}, $a\boxtimes_\CE b$ is semisimple for simple objects $a\in\CC,b\in\CD$. Then apply \cite[Theorem 2.7.5]{KZ1}, which leads to the conclusion directly.
\end{proof}

\begin{cor} \label{cor:rigid}
	Let $\CC,\CD$ be braided multi-tensor categories containing $\CE$. Then $\CC\boxdot_\CE\CD$ is rigid.
\end{cor}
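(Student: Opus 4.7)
The idea is to reduce to Theorem \ref{thm:rigid} and then pass to the subcategory $\CC \boxdot_\CE \CD$ via the local-module description from Section \ref{sec:rtensor_localm}. First, since $\CC$ and $\CD$ are braided multi-tensor categories containing $\CE$, the braidings of $\CC$ and $\CD$ lift the structure functors $\phi_\CC, \phi_\CD$ to braided monoidal functors $\CE \to \FZ(\CC)$ and $\CE \to \FZ(\CD)$, exhibiting $\CC$ and $\CD$ as multi-tensor categories over $\CE$. Theorem \ref{thm:rigid} then yields that $\CC \boxtimes_\CE \CD$ is rigid, so every object of $\CC \boxdot_\CE \CD \subseteq \CC \boxtimes_\CE \CD$ at least has left and right duals in the ambient category.

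It remains to show that the full monoidal subcategory $\CC \boxdot_\CE \CD$ is closed under these duals. Here the cleanest route is through the equivalence $\CC \boxdot_\CE \CD \simeq \RMod^0_{L_\CE}(\CC \boxtimes \CD)$ of Proposition \ref{prop:rtp-localm}, which identifies $\CC \boxdot_\CE \CD$ with the category of local right modules over the commutative algebra $L_\CE = \oplus_i i^L \boxtimes i$ inside the rigid braided monoidal category $\CC \boxtimes \CD$. I plan to invoke the standard argument that local right $A$-modules over a commutative algebra $A$ in a rigid braided category $\CM$ form a rigid subcategory of $\RMod_A(\CM)$: given $x \in \RMod^0_A(\CM)$, one equips $x^L$ with an $A$-action by twisting the action on $x$ through the braiding and duality, and one verifies that the locality condition propagates to $x^L$ by a diagram chase with the hexagon axioms. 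Applied with $\CM = \CC \boxtimes \CD$ and $A = L_\CE$, this shows $\RMod^0_{L_\CE}(\CC \boxtimes \CD)$ is rigid, and hence $\CC \boxdot_\CE \CD$ is rigid.

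The main technical point I expect to be the obstacle is that the rigidity of local modules over a commutative algebra is typically stated in the braided fusion setting, while here $\CC$ and $\CD$ are only required to be braided multi-tensor. The verification in the more general setting is essentially routine, since the construction of the dual local-module structure on $x^L$ and the check of the locality condition via the hexagon use only the rigid braided structure of $\CC \boxtimes \CD$ and the commutativity of $L_\CE$, not semisimplicity. As a fallback, one could give a direct proof by showing that the equalizer condition $\tau^\CC_{e,x} = \tau^\CD_{e,x}$ defining $\CC \boxdot_\CE \CD$ is preserved on passing to duals, exploiting that both $\tau^\CC$ and $\tau^\CD$ arise from half-braidings of the common underlying object $\phi(e) \in \CC \boxtimes_\CE \CD$ in $\FZ(\CC\boxtimes_\CE\CD)$ and that such half-braidings extend functorially to duals in any rigid monoidal category.
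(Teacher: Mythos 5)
Your proposal is correct in outline and shares its first step with the paper: both reduce the existence of duals to Theorem \ref{thm:rigid} (after regarding $\CC,\CD$ as multi-tensor categories over $\CE$), so the only real issue is closure of the full subcategory $\CC\boxdot_\CE\CD$ under duals. For that step you take a genuinely different primary route. The paper argues entirely inside the functor-category model: it identifies $\CC\boxdot_\CE\CD \simeq \Fun_\CE'(\CC^\op,\CD)$ as a full subcategory of the rigid category $\CC\boxtimes_\CE\CD \simeq \Fun_\CE(\CC^\op,\CD)$ (Proposition \ref{prop:rtp-Fun'}) and simply observes that $f^L$ preserves the $\CE$-module braidings if and only if $f$ does --- a one-line check that works verbatim in the non-semisimple setting and under the bare ``containing $\CE$'' hypothesis. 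Your main route, through $\CC\boxdot_\CE\CD \simeq \RMod^0_{L_\CE}(\CC\boxtimes\CD)$ and the standard ``duals of local modules are local'' argument, is conceptually attractive (it connects to the condensable-algebra picture of Section \ref{sec:rtensor_localm}) but carries two costs: Proposition \ref{prop:rtp-localm} is stated only for $\CC,\CD$ \emph{fully faithfully} containing $\CE$, a hypothesis Corollary \ref{cor:rigid} does not impose, so you would either have to add it or re-verify the proposition without it; and, as you acknowledge, the local-module machinery is usually set up in the fusion case, so some re-derivation is needed even though you only require locality-closure under duals rather than full rigidity of $\RMod^0_{L_\CE}$. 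Your fallback argument --- that the defining condition $\tau^\CC_{e,x}=\tau^\CD_{e,x}$ is preserved under duals because both sides come from half-braidings on the same object and half-braidings extend to duals in any rigid monoidal category --- is essentially the paper's proof, just phrased on objects of $\CC\boxtimes_\CE\CD$ rather than on module functors, and I would promote it to the main argument.
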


\begin{proof}
	We have $\CC\boxdot_\CE\CD \simeq \Fun'_\CE(\CC^\op,\CD)$ is the full subcategory of the rigid monoidal category $\CC\boxtimes_\CE\CD \simeq \Fun_\CE(\CC^\op,\CD)$. Note that $f^L$ preserves the $\CE$-module braidings if and only if $f$ preserves the $\CE$-module braidings, for $f \in \Fun_\CE(\CC^\op,\CD)$.
\end{proof}

	We introduce two categories $\mtc_{/\CE}$ and $\bmtc_\CE$ as follows:
	\begin{enumerate}
	\item The category $\mtc_{/\CE}$ of multi-tensor categories fully faithful over $\CE$ with morphisms given by the equivalence classes of finite bimodules over $\CE$. 
	\item The category $\bmtc_\CE$ of braided tensor categories fully faithful containing $\CE$ with morphisms given by the equivalence classes of monoidal bimodules containing $\CE$. 
	\end{enumerate}
	According to \cite[Corollary 2.3.11]{KZ1}, Theorem \ref{thm:rigid} and Corollary \ref{cor:ffm}, the former category is a symmetric monoidal category under $\boxtimes_\CE$ with the tensor unit $\CE$. According to Proposition \ref{prop:rtp-finite}, Corollary \ref{cor:rigid}, Corollary \ref{cor:ffb}, Proposition \ref{prop:rtp-unit} and Remak \ref{rem:commutation-associative}, the latter category is also a symmetric monoidal category under $\boxdot_\CE$ with the tensor unit $\FZ(\CE)$.

\begin{thm} \label{thm:functorial_E}
	The assignment $$\CC \mapsto \FZ(\CC), \ {}_\CC\CM_\CD \mapsto \FZ^{(1)}(\CM) \simeq \Fun_{\CC|\CD}(\CM,\CM)$$ defines a symmetric monoidal functor $$\FZ: \mtc_{/\CE} \to \bmtc_\CE.$$
\end{thm}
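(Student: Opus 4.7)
The plan is to bootstrap from the unenriched Drinfeld center functor (Theorem~\ref{thm:functorial}) together with Theorem~\ref{thm:zfun}, and then carefully track the additional $\CE$-enriching data (the braided monoidal functor from $\CE$ and the natural isomorphisms $\eta$) at each step.

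First, I would verify well-definedness on objects and morphisms. Given $\CC \in \mtc_{/\CE}$, the defining functor $\phi_\CC:\CE \to \FZ(\CC)$ already witnesses $\FZ(\CC)$ as a braided tensor category fully faithful containing $\CE$, so the assignment on objects is immediate. For morphisms, Example~\ref{exam:fun_CDcE} exhibits $\FZ^{(1)}(\CM)=\Fun_{\CC|\CD}(\CM,\CM)$ as a monoidal $\FZ(\CC)$-$\FZ(\CD)$-bimodule containing $\CE$, providing both the braided monoidal functor $\psi_\CW$ and the natural isomorphism $\eta_\CW$ induced by the bimodule-over-$\CE$ structure on $\CM$; its rigidity and finiteness (via Theorem~\ref{thm:rigid}, Corollary~\ref{cor:rigid}) ensure it is a legitimate morphism in $\bmtc_\CE$.

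Next, I would check preservation of identities and composition. The identity morphism of $\CC$ in $\mtc_{/\CE}$ is the regular bimodule ${}_\CC\CC_\CC$, and one readily identifies $\Fun_{\CC|\CC}(\CC,\CC) \simeq \FZ(\CC)$ as monoidal bimodules containing $\CE$. For composition, given ${}_\CB\CM_\CC$ and ${}_\CC\CN_\CD$, I invoke Theorem~\ref{thm:functorial} to obtain the underlying monoidal-bimodule equivalence
\[
\Fun_{\CB|\CC}(\CM,\CM) \boxtimes_{\FZ(\CC)} \Fun_{\CC|\CD}(\CN,\CN) \simeq \Fun_{\CB|\CD}(\CM \boxtimes_\CC \CN,\, \CM \boxtimes_\CC \CN),
\]
given by $f \boxtimes_{\FZ(\CC)} g \mapsto f \boxtimes_\CC g$, and then verify that it intertwines the respective $\eta$-structures. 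Since $\eta$ on each factor comes from the $\CE$-bimodule structures of $\CM$ and $\CN$, and the $\CE$-bimodule structure on $\CM \boxtimes_\CC \CN$ is induced from these, this compatibility follows from the naturality of all the constructions.

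For the symmetric monoidal structure, on objects the required equivalence $\FZ(\CC) \boxdot_\CE \FZ(\CD) \simeq \FZ(\CC \boxtimes_\CE \CD)$ of braided monoidal categories containing $\CE$ is exactly equation~\eqref{eq:center-monoidal} of Theorem~\ref{thm:zfun}. On morphisms, the special case $\CM_i=\CN_i$ of equation~\eqref{eq:stacking} supplies the required monoidal equivalence. The tensor unit $\CE$ of $\mtc_{/\CE}$ is sent to $\FZ(\CE)$, which is precisely the tensor unit of $\bmtc_\CE$, and the symmetric structure on $\FZ$ is inherited from the symmetry of $\boxtimes_\CE$ and $\boxdot_\CE$ by direct inspection of the formulas $f \boxtimes_\CE g \mapsto f \boxtimes_\CE g$ in Theorem~\ref{thm:zfun}.

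The main obstacle is the bookkeeping of coherences: once the underlying equivalences are provided by Theorems~\ref{thm:functorial} and~\ref{thm:zfun}, what remains is the routine but delicate verification that these equivalences preserve the additional $\CE$-containing data, i.e.\ the natural isomorphisms $\eta$ from Definition~\ref{def:mbimcE} and compatibility with the $\CE$-module braidings; here I would rely on Remark~\ref{rem:preseve-br} and the explicit descriptions of $\eta$ in Examples~\ref{exam:fun_CDcE} and~\ref{exam:mbimcE_br} to reduce each check to naturality of the underlying constructions.
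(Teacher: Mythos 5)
Your proposal is correct and follows essentially the same route as the paper, which simply cites Example \ref{exam:fun_CD-br2} (equivalently \ref{exam:fun_CDcE}) for well-definedness on morphisms, Theorem \ref{thm:functorial} for functoriality, and Theorem \ref{thm:zfun} for symmetric monoidality; your additional tracking of the $\eta$-data is exactly the bookkeeping the paper leaves implicit. (The appeal to Theorem \ref{thm:rigid} and Corollary \ref{cor:rigid} is not needed for a morphism of $\bmtc_\CE$ to be legitimate, since monoidal bimodules containing $\CE$ are not required to be rigid, but this does not affect the argument.)
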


\begin{proof}
	According to Example \ref{exam:fun_CD-br2}, $\FZ$ is well-defined on morphism. According to Theorem \ref{thm:functorial}, $\FZ$ is a well-defined functor. According to Theorem \ref{thm:zfun}, $\FZ$ is a symmetric monoidal functor.
\end{proof}
	We refer to this functor $\FZ$ as the \textit{symmetry enriched center functor}.
	
\begin{rem}
	For Theorem \ref{thm:functorial_E}, $\CE$ has to be restricted to a fusion category rather than a multi-fusion one (see Remark \ref{rem:E-ten-ind}) and the domain of $\FZ$ needs to be restricted to multi-tensor categories fully faithful over $\CE$ to ensure they are indecomposable (to use \cite[Theorem 3.1.7]{KZ1}).
	
	Moreover, in the special case $\CE=\bk$, a multi-tensor category is fully faithful over $\CE$ if and only if it is indecomposable; a braided tensor category is always fully faithful containing $\CE$. Therefore, Theorem \ref{thm:functorial_E} generalizes the usual case in \cite[Theorem 3.1.8]{KZ1}.
\end{rem}

\begin{rem}
	Consider the composition of Drinfeld center $\FZ(-)$ and $\CE \boxdot_\CE -$. It is nothing but the $/\CE$-center $\FZ_{/\CE}$. We can make $\FZ_{/\CE}$ a symmetric monoidal functor as well.
	
	More precisely, we introduce a category $\bmtc_{/\CE}$ of braided tensor categories fully faithful over $\CE$ with morphisms given by the equivalence classes of monoidal bimodules containing $\CE$. And then, the assignment
	$$\CC \mapsto \FZ_{/\CE}(\CC), \ {}_\CC\CM_\CD \mapsto \FZ^{(1)}_{/\CE}(\CM) := \Fun_{\CC \boxtimes_\CE \CD^\rev}(\CM,\CM)$$ defines a symmetric monoidal functor $\FZ_{/\CE}: \mtc_{/\CE} \to \bmtc_{/\CE}$, as well.
	
\end{rem}

\subsection{Fully-faithfulness of the symmetry enriched center}
	In this subsection, let $k$ be an algebraically closed field of characteristic zero and let $\CE$ be a symmetric fusion category over $k$. 
	
	And in what follows, we only consider braided fusion categories fully faithful containing $\CE$. Thus, we omit "fully faithful" for brevity.

	For two braided fusion categories $\CC,\CD$ containing $\CE$, we study the category $\CC \boxtimes_\CE \CD$. By Remark \ref{rem:tp-m}, we can identify $\CC \boxtimes_\CE \CD$ with $\RMod_{L_\CE}(\CC \boxtimes \CD)$, where it is worthwhile to note that $L_\CE$ is a condensable algebra in $\CC \boxtimes \CD$. According to \cite{DMNO}, for a braided fusion category $\CB$ and a condensable  algebra $A$ in $\CB$, the unit object of $\RMod_A(\CB)$ is $A$ and its connectedness implies that $A \in \RMod_A(\CB)$ (as a multi-fusion category) is simple, which leads to the consequence that $\RMod_A(\CB)$ is a fusion category. As a consequence, $\CC \boxtimes_\CE \CD$ is a braided fusion category. Furthermore, $\CC \boxdot_\CE \CD$, as a full subcategory of $\CC \boxtimes_\CE \CD$, respects fusion rules and braidings, and is closed under taking direct sums, thus is a braided fusion category containing $\CE$.
	
\begin{rem}
	There are several equivalent conditions of the nondegeneracy of a braided fusion category in \cite{DGNO}. One is that, for a braided fusion category $\CC$, its M\"uger center is trivial, i.e. $\CC' = \bk$.
\end{rem}

\begin{lem} \label{lem:non-dege}
	Let $\CC,\CD$ be braided fusion categories containing $\CE$, then the fact that $\CC \boxdot_\CE \CD$ is nondegenerate implies that $\CC$ and $\CD$ are nondegenerate.
\end{lem}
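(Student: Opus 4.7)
The plan is to prove the contrapositive: assume $\CC$ fails to be nondegenerate and exhibit a nontrivial object in the M\"uger center of $\CC \boxdot_\CE \CD$. By the symmetry of the statement in $\CC$ and $\CD$, it suffices to handle $\CC$; the argument for $\CD$ is entirely analogous. So I would pick a simple object $x \in \CC'$ with $x \not\simeq \one_\CC$ and aim for a contradiction.

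First, I would place $x \boxtimes \one_\CD$ in the M\"uger center of the ambient Deligne product: the well-known identity $(\CC \boxtimes \CD)' \simeq \CC' \boxtimes \CD'$ combined with $\one_\CD \in \CD'$ does the job. I would then pass to the local module picture via Proposition~\ref{prop:rtp-localm}, under which $\CC \boxdot_\CE \CD \simeq \RMod^0_{L_\CE}(\CC \boxtimes \CD)$. The free $L_\CE$-module $(x \boxtimes \one_\CD) \otimes L_\CE$ then (i) is local, since $x \boxtimes \one_\CD$ centralizes $L_\CE$, and (ii) lies in the M\"uger center of $\RMod^0_{L_\CE}(\CC \boxtimes \CD)$, because the induced double braiding in the local module category with any other local module reduces to the ambient double braiding in $\CC \boxtimes \CD$ of $x \boxtimes \one_\CD$ against the underlying object, which is trivial. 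Under the equivalence this free module corresponds to $x \boxtimes_\CE \one_\CD$.

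By the nondegeneracy hypothesis on $\CC \boxdot_\CE \CD$, one must then have $x \boxtimes_\CE \one_\CD \simeq \one^{\oplus n}$ for some $n \geq 0$. To rule this out, I would combine the free/forget adjunction with the decomposition $L_\CE \simeq \bigoplus_{i \in \CO(\CE)} \phi_\CC(i^L) \boxtimes \phi_\CD(i)$ and the fully-faithfulness of $\phi_\CD$ (which forces $\Hom_\CD(\one_\CD, \phi_\CD(i)) = 0$ unless $i \simeq \one$) to obtain the key Hom identity
\begin{equation*}
\Hom_{\CC \boxtimes_\CE \CD}(x \boxtimes_\CE \one_\CD,\, y \boxtimes_\CE \one_\CD) \;\simeq\; \Hom_\CC(x,y), \qquad y \in \CC.
\end{equation*}
Setting $y = x$ gives $\End(x \boxtimes_\CE \one_\CD) \simeq k$, so $x \boxtimes_\CE \one_\CD$ is nonzero, forcing $n \geq 1$. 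Setting instead $y = \one_\CC$ gives $\Hom(x \boxtimes_\CE \one_\CD, \one) \simeq \Hom_\CC(x, \one_\CC) = 0$ since $x$ is simple and distinct from $\one_\CC$, contradicting $n \geq 1$. Hence $\CC$ must be nondegenerate.

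The main obstacle I anticipate is step (ii) above: the assertion that a free module on a M\"uger central object of $\CC \boxtimes \CD$ lands in the M\"uger center of the local-module category. This is standard in the DMNO/Kirillov--Ostrik theory of condensable algebras, but the verification requires unwinding the definition of the braiding on $\RMod^0_{L_\CE}$ (which passes through the relative tensor product $\otimes_{L_\CE}$) and reducing it to the ambient braiding in $\CC \boxtimes \CD$; the essential simplification is that for a free module $N \otimes L_\CE$, the tensor product with any local module $M'$ satisfies $(N \otimes L_\CE) \otimes_{L_\CE} M' \simeq N \otimes M'$, so the induced double braiding between them collapses to $c_{M',N} \circ c_{N,M'}$, which is the identity when $N \in (\CC \boxtimes \CD)'$.
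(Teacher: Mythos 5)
Your proposal is correct and follows essentially the same strategy as the paper's proof: take a simple $x \in \CC'$ with $x \not\simeq \one_\CC$ and show that $x \boxtimes_\CE \one_\CD$ is a simple, non-unit object of the M\"uger center of $\CC \boxdot_\CE \CD$, contradicting nondegeneracy. The paper verifies membership in $\CC\boxdot_\CE\CD$ directly from the triviality of the $\CE$-module braiding rather than via the local-module equivalence, and simply asserts the simplicity and non-triviality that you establish through the free/forget Hom computation, so your write-up supplies exactly the details the paper leaves implicit.
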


\begin{proof}
	Otherwise, we assume that there exists a simple $x \in \CC'$ nonisomorphic to $\one_\CC$. Then note that $\tau^\CC_{e,x}=\Id_{e \odot x}$ for any $e \in \CE$, which leads to $x \boxdot_\CE \one_\CD \in (\CC \boxdot_\CE \CD)'$. Moreover, $x \boxdot_\CE \one_\CD$ is simple and nonisomorphic to $\one_{\CC \boxdot_\CE \CD}$. These cause that $(\CC \boxdot_\CE \CD)'$ is nontrivial, which contradicts with that $\CC \boxdot_\CE \CD$ is nondegenerate.
\end{proof}		

\begin{prop} \label{prop:mfb-cl}
	Let $\CC,\CD$ be braided fusion categories containing $\CE$, and let $\CM$ be a multi-fusion $\CC$-$\CD$-bimodule containing $\CE$. Then $\CM$ is closed if and only if $\bar\CC\boxdot_\CE\CD \simeq \FZ(\CE\boxdot_\CE\CM)$.
\end{prop}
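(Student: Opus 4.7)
The plan is to recast both sides of the target equivalence in terms of local modules over the canonical condensable algebra $L_\CE$. By Proposition \ref{prop:rtp-localm} the left-hand side is identified with $\RMod^0_{L_\CE}(\bar\CC\boxtimes\CD)$, and the central task is to establish the parallel identification
\[
\FZ(\CE\boxdot_\CE\CN) \;\simeq\; \RMod^0_{L_\CE}(\FZ(\CN))
\]
for any multi-fusion category $\CN$ containing $\CE$ centrally, where $L_\CE$ sits inside $\FZ(\CN)$ via the central inclusion $\CE\to\FZ(\CN)$. The intuition is that passing from $\CN$ to its centralizer $\CE\boxdot_\CE\CN = \CE'|_\CN$ corresponds, at the Drinfeld-center level, to condensing the algebra $L_\CE$ and restricting to its local modules; this is the braided ($E_2$) analogue of the condensation formalism of \cite{DMNO,DNO}. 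Applied to $\CN=\CM$, the target statement reduces to showing that the canonically induced functor $\RMod^0_{L_\CE}(\psi_\CM)$ is an equivalence if and only if $\psi_\CM$ itself is.

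Granting this reduction, the forward direction is straightforward: a braided monoidal equivalence $\psi_\CM:\bar\CC\boxtimes\CD\xrightarrow{\simeq}\FZ(\CM)$ respects the central inclusions of $\CE$ and hence carries $L_\CE$ to $L_\CE$, so it induces an equivalence on $\RMod^0_{L_\CE}$ which under the two identifications becomes $\bar\CC\boxdot_\CE\CD\simeq\FZ(\CE\boxdot_\CE\CM)$. For the converse, the Drinfeld center $\FZ(\CE\boxdot_\CE\CM)$ is automatically nondegenerate, so Lemma \ref{lem:non-dege} forces both $\CC$ and $\CD$ to be nondegenerate and hence $\bar\CC\boxtimes\CD$ to be nondegenerate. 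A Frobenius--Perron dimension count, using $\mathrm{FPdim}(\CE\boxdot_\CE\CM) = \mathrm{FPdim}(\CM)/\mathrm{FPdim}(\CE)$ together with the standard formula $\mathrm{FPdim}(\RMod^0_{L_\CE}(-)) = \mathrm{FPdim}(-)/\mathrm{FPdim}(\CE)^2$ for local modules over a condensable algebra in a nondegenerate braided fusion category, then yields $\mathrm{FPdim}(\bar\CC\boxtimes\CD)=\mathrm{FPdim}(\FZ(\CM))$. Combined with nondegeneracy of the source and the hypothesis that $\RMod^0_{L_\CE}(\psi_\CM)$ is an equivalence, a M\"uger-type decomposition argument forces $\psi_\CM$ to be fully faithful, and matching dimensions then upgrade it to an equivalence.

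The main obstacle is the auxiliary identification $\FZ(\CE\boxdot_\CE\CN)\simeq\RMod^0_{L_\CE}(\FZ(\CN))$: while morally this is the natural braided-condensation statement and is implicit in the theory of modular extensions, pinning it down rigorously within the framework of this paper will require carefully tracking the central $\CE$-structure through the local-module description of Section \ref{sec:rtensor_localm} and through the Drinfeld-center construction. A secondary technical point is the last step of the converse: deducing that $\psi_\CM$ itself is fully faithful from the fact that its restriction to local $L_\CE$-modules is an equivalence, for which one may wish to analyse the would-be kernel of $\psi_\CM$ via the M\"uger centralizer of the image inside $\FZ(\CM)$ and conclude by nondegeneracy of $\bar\CC\boxtimes\CD$.
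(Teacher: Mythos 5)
Your overall strategy---recasting both sides as local $L_\CE$-modules---is a genuinely different route from the paper's, which instead establishes a Morita equivalence between $\CE\boxtimes_{\CE\boxtimes\CE}\CM$ and $\CE\boxdot_\CE\CM$ (by computing $\Fun_{\CE\boxtimes_{\CE\boxtimes\CE}\CM^\rev}(\CM,\CM)\simeq\CE\boxdot_\CE\CM$ through a comparison of three $\CE$-module braidings on $\Fun_{\CE|\CM}(\CM,\CM)$) and then applies the monoidality of the center functor (Theorem \ref{thm:zfun}) together with closedness to compute $\FZ(\CE\boxdot_\CE\CM)\simeq\bar\CC\boxdot_\CE\CD$. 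The problem is that your proof has a genuine gap exactly where the work lies: the auxiliary identification $\FZ(\CE\boxdot_\CE\CN)\simeq\RMod^0_{L_\CE}(\FZ(\CN))$, which you flag as ``the main obstacle'' and defer, carries essentially the entire content of the sufficiency direction. Note that $\CE\boxdot_\CE\CN$ is the full subcategory of objects transparent to the two central images of $\CE$ (Remark \ref{rem:centralizer-SET}), not a category of modules over a condensable algebra in $\CN$, so the condensation formula $\FZ(\CN_A)\simeq\RMod^0_A(\FZ(\CN))$ of \cite{DMNO} does not apply off the shelf; one must first relate $\CE\boxdot_\CE\CN$ to such a module category (or, as the paper does, produce a Morita equivalence with $\CE\boxtimes_{\CE\boxtimes\CE}\CN$), and that is precisely the nontrivial step. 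As written, the argument assumes a statement of the same depth as the one to be proved.

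There is also a secondary problem in the converse. Your reduction replaces the hypothesis ``there exists an equivalence $\bar\CC\boxdot_\CE\CD\simeq\FZ(\CE\boxdot_\CE\CM)$'' by ``the canonically induced functor $\RMod^0_{L_\CE}(\psi_\CM)$ is an equivalence''; these are not interchangeable, and your sketch of the necessity explicitly invokes the latter. Moreover, the dimension formula $\mathrm{FPdim}(\CE\boxdot_\CE\CM)=\mathrm{FPdim}(\CM)/\mathrm{FPdim}(\CE)$ for the transparent subcategory of a bimodule not yet known to be closed is unproved and would itself have to be extracted from the deferred identification. The paper's converse sidesteps both issues: it deduces nondegeneracy of $\CC$ and $\CD$ from Lemma \ref{lem:non-dege}, obtains full faithfulness of $\psi_\CM$ from \cite[Corollary 3.26]{DMNO}, writes $\FZ(\CM)\simeq\bar\CC\boxtimes\CD\boxtimes\CB$ by \cite[Theorem 3.13]{DGNO}, and then applies the already-proved sufficiency to $\CM$ viewed as a closed $\CC$-$(\CD\boxtimes\CB)$-bimodule to force $\CB\simeq\bk$. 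If you want to salvage your route, you should prove the auxiliary identification (for instance by first showing $\CE\boxdot_\CE\CM$ is Morita equivalent to a module category over the image of $L_\CE$) and restructure the converse so that it only uses the existence of an abstract equivalence.
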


	Before we proof this proposition, we refer readers to considering the $\CE$-module braidings on $\CW:=\Fun_{\CE|\CM}(\CM,\CM)$. 

	Note that we have two ways to see $\FZ(\CM)$ as a braided fusion category containing $\CE$ (recall Definition \ref{def:bmccE}), i.e. there are two braided monoidal functors from $\CE$ to $\FZ(\CM)$, $\Phi_\CM^1:\CE \xrightarrow{\overline{\phi_\CC} \boxtimes \one_\CD} \overline{\CC} \boxtimes \CD \xrightarrow{\psi_\CM} \FZ(\CM)$ and $\Phi_\CM^2:\CE \xrightarrow{\one_{\bar\CC} \boxtimes \phi_\CD} \overline{\CC} \boxtimes \CD \xrightarrow{\psi_\CM} \FZ(\CM)$. The braided monoidal functor $\Phi_\CE:\CE \to \FZ(\CE)$ is evident. Then, we have $\pi_\CE \circ \Phi_\CE = \Id_\CE$, $\pi_\CM \circ \Phi_\CM^1 = \psi_\CM^-$ and $\pi_\CM \circ \Phi_\CM^2 = \psi_\CM^+$.

	All these lead that there are two ways to see $\CW$ as a monoidal $\FZ(\CE)$-$\FZ(\CM)$-bimodule containing $\CE$ as follows (recall Definition \ref{def:mbimcE}). We have a braided monoidal functor $\psi_\CW: \overline{\FZ(\CE)} \boxtimes \FZ(\CM) \simeq \overline{\FZ(\CE)}\boxtimes\overline{\FZ(\CM^\rev)} \to \FZ(\CW)$ such that $\psi_\CW(e \boxtimes m) = \psi_\CM^-(e) \otimes - \otimes m$ as objects of $\CW$, equipped with the evident half-braiding $\psi_\CM^-(e) \otimes F(-) \otimes m \simeq F(\psi_\CM^-(e) \otimes - \otimes m)$, where $e \in \overline{\FZ(\CE)}$, $m \in \overline{\FZ(\CM^\rev)}$, $F \in \Fun_{\CE|\CM}(\CM,\CM)$. 
	And we have two isomorphisms of functors (i.e. natural transformations) $\eta_{\CW,e}^1:\psi_\CM^-(e) \otimes - \to - \otimes \psi_\CM^-(e)$ and $\eta_{\CW,e}^2 = (\Id_- \otimes \eta_{\CM,e}) \circ \eta_{\CW,e}^1 : \psi_\CM^-(e) \otimes - \to - \otimes \psi_\CM^+(e)$. 

	Then, we have two evident $\CE$-module braidings on $\CW$ as follows: 

	$\tau_{e,F}^1:\psi_\CM^-(e) \otimes F(-) \xrightarrow{\eta_{\CW,e}^1 \circ \Id_F} F(-) \otimes \psi_\CM^-(e) \xrightarrow{\sim} F(- \otimes \psi_\CM^-(e)) \xrightarrow{\Id_F \circ (\eta_{\CW,e}^1)^{-1}} F(\psi_\CM^-(e) \otimes -) \xrightarrow{\sim} \psi_\CM^-(e) \otimes F(-)$, 

	$\tau_{e,F}^2:\psi_\CM^-(e) \otimes F(-) \xrightarrow{\eta_{\CW,e}^2 \circ \Id_F} F(-) \otimes \psi_\CM^+(e) \xrightarrow{\sim} F(- \otimes \psi_\CM^+(e)) \xrightarrow{\Id_F \circ (\eta_{\CW,e}^2)^{-1}} F(\psi_\CM^-(e) \otimes -) \xrightarrow{\sim} \psi_\CM^-(e) \otimes F(-)$.

	Let $\tau_{e,F}^3=(\tau_{e,F}^1)^{-1} \circ \tau_{e,F}^2$. 	
	Then, we have $\tau_{e,F}^3:\psi_\CM^-(e) \otimes F(-) \xrightarrow{\eta_{\CW,e}^2 \circ \Id_F} F(-) \otimes \psi_\CM^+(e) \xrightarrow{\sim} F(- \otimes \psi_\CM^+(e)) \xrightarrow{\Id_F \circ (\eta_{\CW,e}^2)^{-1}} F(\psi_\CM^-(e) \otimes -) \xrightarrow{\Id_F \circ \eta_{\CW,e}^1} F(- \otimes \psi_\CM^-(e)) \xrightarrow{\sim} F(-) \otimes \psi_\CM^-(e) \xrightarrow{(\eta_{\CW,e}^1)^{-1} \circ \Id_F} \psi_\CM^-(e) \otimes F(-)$.
	
	Then, let $\sigma_{e,F}=\{ F(-) \otimes \psi_\CM^-(e) \xrightarrow{\Id_F \circ (\eta_{\CW,e}^1)^{-1}} \psi_\CM^-(e) \otimes F(-) \xrightarrow{\tau_{e,F}^3} \psi_\CM^-(e) \otimes F(-) \xrightarrow{\Id_F \circ \eta_{\CW,e}^1} F(-) \otimes \psi_\CM^-(e) \}$
	
	$=\{ F(-) \otimes \psi_\CM^-(e) \xrightarrow{\Id_{F(-)} \otimes \eta_\CM} F(-) \otimes \psi_\CM^+(e) \xrightarrow{\sim} F(- \otimes \psi_\CM^+(e)) \xrightarrow{F(\Id_- \otimes \eta_\CM^{-1})} F(- \otimes \psi_\CM^-(e)) \xrightarrow{\sim} F(-) \otimes \psi_\CM^-(e) \}$.
	
	Note that the condition $\tau_{e,F}^1 = \tau_{e,F}^2 = \Id_{e\odot F}$ is equivalent to $\tau_{e,F}^1 = \tau_{e,F}^3 = \Id_{e\odot F}$, while $\tau_{e,F}^3 = \Id_{e\odot F}$ is equivalent to $\sigma_{e,F} = \Id_{e\odot F}$. 
	Also note that under the forgetful functor (forgetting the $\CE$-module structure) $\Fun_{\CE|\CM}(\CM,\CM) \to \Fun_{\CM^\rev}(\CM,\CM) \xrightarrow{\simeq} \CM$, $\sigma_{e,F}$ is carried to the original evident $\CE$-module braiding on $\CM$.

\begin{proof}[Proof of Proposition\ref{prop:mfb-cl}]
	Firstly, we prove the sufficiency. Note that $\CM$ is an $\CE \boxtimes \CE$-module, where the action $\odot:\CE \boxtimes \CE \times \CM \to \CM$, is defined by the formula  $(e_1 \boxtimes e_2) \odot m = \psi_\CM^-(e_1) \otimes m \otimes \psi_\CM^+(e_2)$, for $e_1,e_2 \in \CE$ and $m \in \CM$. Then, we have the following composed equivalence:
	\begin{equation}
	\begin{aligned}
	\Fun_{\CE \boxtimes_{\CE \boxtimes \CE} \CM^\rev}(\CM,\CM) \ &\simeq (\CE \boxtimes \CE) \boxdot_{\CE \boxtimes \CE} \Fun_{\CE|\CM}(\CM,\CM) \\
	&\simeq \CE \boxdot_\CE^2 (\CE \boxdot_\CE^1 \Fun_{\CE|\CM}(\CM,\CM)) \\
	&\simeq \CE \boxdot_\CE^3 (\CE \boxdot_\CE^1 \Fun_{\CE|\CM}(\CM,\CM)) \\
	&\simeq \CE \boxdot_\CE \Fun_{\CM^\rev}(\CM,\CM) \\
	&\simeq \CE \boxdot_\CE \CM \\
	\end{aligned}
	\end{equation}
	where we have used Corollary \ref{cor:expand} both in the first step and in the fourth step. And the second step follows definition, and the third step is due to the analysis of the $\CE$-module braidings on $\Fun_{\CE|\CM}(\CM,\CM)$ before. 
	
	Then, we get that $\CE \boxtimes_{\CE \boxtimes \CE} \CM$ and $\CE \boxdot_\CE \CM$ are Morita equivalent, which leads to the following composed equivalence:
	$
	\FZ(\CE \boxdot_\CE \CM) \simeq \FZ(\CE \boxtimes_{\CE \boxtimes \CE} \CM) \simeq \FZ(\CE) \boxdot_{\CE \boxtimes \CE} \FZ(\CM) \simeq \FZ(\CE) \boxdot_{\CE \boxtimes \CE} (\bar\CC \boxtimes \CD) \simeq \bar\CC \boxdot_\CE \FZ(\CE) \boxdot_\CE \CD \simeq \bar\CC \boxdot_\CE \CD
	$.
	
	Next, we prove the necessity.

	Note that $\FZ(\CE\boxdot_\CE\CM)$ is nondegenerate. Then, by Lemma \ref{lem:non-dege}, the equivalence $\bar\CC\boxdot_\CE\CD \simeq \FZ(\CE\boxdot_\CE\CM)$ implies both $\CC$ and $\CD$ are nondegenerate.

	According to \cite[Corollary 3.26]{DMNO}, any braided monoidal functor between nondegenerate braided fusion categories is fully faithful. 	
	Thus, we have the braided monoidal functor $\psi_\CM:\bar\CC \boxtimes\CD \to \FZ(\CM)$ is fully faithful. 
	Next, according to \cite[Theorem 3.13]{DGNO}, we can assume $\FZ(\CM) \simeq \bar\CC \boxtimes \CD \boxtimes \CB$ where $\CB$ is the centralizer of $\bar\CC \boxtimes \CD$ in $\FZ(\CM)$. 
	Then, we see $\CM$ as a closed multi-fusion $\CC$-$\CD\boxtimes\CB$ bimodule containing $\CE$. Using the sufficiency, we attain $\bar\CC \boxdot_\CE \CD \boxtimes \CB \simeq \FZ(\CE \boxdot_\CE \CM)$, which forces $\CB \simeq \bk$.
\end{proof}

\begin{cor} \label{cor:nde-rpt-nde}
	Let $\CC_1,\CC_2,\CD_1,\CD_2$ be nondegenerate braided fusion categories containing $\CE$, and let ${}_{\CC_1}\CM_{\CC_2},{}_{\CD_1}\CN_{\CD_2}$ be closed multi-fusion bimodules containing $\CE$. Then $\CM\boxdot_\CE\CN$ is a closed multi-fusion $\CC_1\boxdot_\CE\CD_1$-$\CC_2\boxdot_\CE\CD_2$-bimodule. In particular, $\CC\boxdot_\CE\CD$ is a nondegenerate braided fusion category containing $\CE$.
\end{cor}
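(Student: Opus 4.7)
The plan is to apply Proposition \ref{prop:mfb-cl}, which characterizes closedness of a multi-fusion $\CA$-$\CB$-bimodule $\CP$ containing $\CE$ by the equivalence $\bar\CA \boxdot_\CE \CB \simeq \FZ(\CE \boxdot_\CE \CP)$. Before invoking it, I need to confirm that $\CM \boxdot_\CE \CN$ really carries the structure of a multi-fusion $(\CC_1 \boxdot_\CE \CD_1)$-$(\CC_2 \boxdot_\CE \CD_2)$-bimodule containing $\CE$. The ambient $\CM \boxtimes_\CE \CN$ is rigid by Theorem \ref{thm:rigid} and semisimple by Corollary \ref{cor:ss}, and $\CM \boxdot_\CE \CN$ sits inside it as a full subcategory stable under direct sums, subquotients, and tensor products. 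Stability under duals follows from the argument of Corollary \ref{cor:rigid}, applied to the $\CE$-module braidings that $\CM$ and $\CN$ carry via Example \ref{exam:mbimcE_br}. The bimodule structure containing $\CE$ is induced naturally from those on $\CM$ and $\CN$.

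With these structures in hand, closedness of $\CM \boxdot_\CE \CN$ reduces to a chain of equivalences. Using associativity and commutativity of $\boxdot_\CE$ from Remark \ref{rem:commutation-associative}, together with $\overline{\CC_1 \boxdot_\CE \CD_1} \simeq \bar{\CC_1} \boxdot_\CE \bar{\CD_1}$,
\[
\overline{\CC_1 \boxdot_\CE \CD_1} \boxdot_\CE (\CC_2 \boxdot_\CE \CD_2) \simeq (\bar{\CC_1} \boxdot_\CE \CC_2) \boxdot_\CE (\bar{\CD_1} \boxdot_\CE \CD_2).
\]
Invoking the closedness of $\CM$ and $\CN$ through Proposition \ref{prop:mfb-cl} rewrites the two factors as $\FZ(\CE \boxdot_\CE \CM)$ and $\FZ(\CE \boxdot_\CE \CN)$. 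The monoidality of the symmetry enriched center functor (Theorem \ref{thm:zfun}, in the form \eqref{eq:center-monoidal}) combines these into $\FZ\bigl((\CE \boxdot_\CE \CM) \boxtimes_\CE (\CE \boxdot_\CE \CN)\bigr)$, and Remark \ref{rem:stacking-centralizer} identifies the inner category with $\CE \boxdot_\CE (\CM \boxdot_\CE \CN)$. This matches the form required by Proposition \ref{prop:mfb-cl} and gives the desired closedness.

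For the \emph{in particular} statement, I specialize to $\CC_1 = \CC_2 = \CM = \CC$ and $\CD_1 = \CD_2 = \CN = \CD$, each viewed as the regular bimodule over itself. Nondegeneracy of $\CC$ and of $\CD$ is precisely closedness of these regular bimodules, so the first half of the corollary yields that $\CC \boxdot_\CE \CD$ is a closed multi-fusion bimodule over $(\CC \boxdot_\CE \CD) \boxdot_\CE (\CC \boxdot_\CE \CD)$, which is exactly nondegeneracy of $\CC \boxdot_\CE \CD$ as a braided fusion category containing $\CE$.

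The main obstacle I foresee is the structural bookkeeping in the first step: verifying that $\CM \boxdot_\CE \CN$ is closed under duals (so that it is genuinely multi-fusion) and inherits a well-defined monoidal bimodule structure containing $\CE$, with a coherent natural isomorphism $\eta$. In parallel, one has to check that every equivalence in the chain preserves the braided monoidal structure respecting $\CE$, so that the criterion of Proposition \ref{prop:mfb-cl} is truly verified and not merely an abstract isomorphism of braided fusion categories.
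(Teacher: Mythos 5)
Your proposal is correct and follows essentially the same route as the paper: the paper's proof is exactly the chain $(\bar\CC_1\boxdot_\CE\bar\CD_1)\boxdot_\CE(\CC_2\boxdot_\CE\CD_2)\simeq\FZ(\CE\boxdot_\CE\CM)\boxdot_\CE\FZ(\CE\boxdot_\CE\CN)\simeq\FZ((\CE\boxdot_\CE\CM)\boxtimes_\CE(\CE\boxdot_\CE\CN))\simeq\FZ(\CE\boxdot_\CE(\CM\boxdot_\CE\CN))$, using Proposition \ref{prop:mfb-cl}, equivalence \eqref{eq:center-monoidal} and Remark \ref{rem:stacking-centralizer} just as you do. Your additional bookkeeping on the multi-fusion bimodule structure of $\CM\boxdot_\CE\CN$ and the explicit treatment of the ``in particular'' case are details the paper leaves implicit, but they do not change the argument.
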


\begin{proof}
	We have the composed equivalence $(\bar\CC_1 \boxdot_\CE \bar\CD_1) \boxdot_\CE (\CC_2 \boxdot_\CE \CD_2) \simeq \FZ(\CE \boxdot_\CE \CM) \boxdot_\CE \FZ(\CE \boxdot_\CE \CN) \simeq \FZ((\CE \boxdot_\CE \CM) \boxtimes_\CE (\CE \boxdot_\CE \CN)) \simeq \FZ(\CE \boxdot_\CE (\CM \boxdot_\CE \CN))$. This leads to the conclusion.
\end{proof}

\begin{rem}
	Another way to show $\CC\boxdot_\CE\CD$ is nondegenerate for nondegenerate braided fusion categories $\CC$ and $\CD$ is using Proposition \ref{prop:rtp-localm} (the equivalence $\CC\boxdot_\CE\CD \simeq \RMod_{L_\CE}^0(\CC \boxtimes \CD)$) and \cite[Corollary 3.30]{DMNO} (the braided fusion category $\RMod_A^0(\CB)$ is nondegenerate for a condensable algebra $A$ in a braided fusion category $\CB$).
\end{rem}

	We introduce two categories $\mfus_{/\CE}$ and $\bfus^\cl_\CE$ as follows:
\begin{enumerate}
	\item The category $\mfus_{/\CE}$ of multi-fusion categories fully faithful over $\CE$ with the equivalence classes of nonzero semisimple bimodules over $\CE$ as morphisms.
	\item The category $\bfus^\cl_\CE$ of nondegenerate braided fusion categories fully faithful containing $\CE$ with the equivalence classes of closed multi-fusion bimodules containing $\CE$ as morphisms.
\end{enumerate}
	Note that $\bfus^\cl_\CE$ are well-defined due to Corollary \ref{cor:nde-rpt-nde}. Both categories are symmetric monoidal categories under $\boxtimes_\CE$ and $\boxdot_\CE$, with tensor units $\CE$ and $\FZ(\CE)$, respectively.

\begin{thm} \label{thm:fully-faithful_E}
	The symmetry enriched center functor from Theorem \ref{thm:functorial} restricts to a fully faithful symmetric monoidal functor $\FZ: \mfus_{/\CE} \to \bfus^\cl_\CE$.
\end{thm}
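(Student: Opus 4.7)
The plan is to reduce fully-faithfulness to Theorem~\ref{thm:fully-faithful} (the case $\CE=\bk$) and then separately check the $\CE$-compatibilities. Since symmetric monoidality is already packaged in Theorem~\ref{thm:functorial_E}, only three things remain: \textbf{(i)} verify that the restriction lands in $\bfus^\cl_\CE$, \textbf{(ii)} prove faithfulness on morphism classes, and \textbf{(iii)} prove fullness on morphism classes.

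For \textbf{(i)}, given $\CC\in\mfus_{/\CE}$, the center $\FZ(\CC)$ is a nondegenerate braided fusion category by the classical result, while $\phi_{\FZ(\CC)}=\phi_\CC:\CE\to\FZ(\CC)$ is fully faithful on simples by hypothesis, so $\FZ(\CC)\in\bfus^\cl_\CE$. For a semisimple bimodule $\CM$ over $\CE$, Example~\ref{exam:fun_CDcE} equips $\FZ^{(1)}(\CM)=\Fun_{\CC|\CD}(\CM,\CM)$ with the structure of a monoidal $\FZ(\CC)$-$\FZ(\CD)$-bimodule containing $\CE$, and Theorem~\ref{thm:fully-faithful} (applied after forgetting $\CE$; note that ``fully faithful over $\CE$'' forces indecomposability since $\one_\CE$ is simple) supplies the multi-fusion property and closedness of $\psi_{\FZ^{(1)}(\CM)}$.

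For faithfulness \textbf{(ii)}, let $F:\FZ^{(1)}(\CM)\xrightarrow{\simeq}\FZ^{(1)}(\CN)$ be an equivalence of closed multi-fusion bimodules containing $\CE$. Forgetting the $\CE$-datum and applying Theorem~\ref{thm:fully-faithful} produces a bimodule equivalence $G:\CM\xrightarrow{\simeq}\CN$ inducing $F$ up to isomorphism. The extra datum $\theta$ of Definition~\ref{def:mbimcE} intertwining $\eta_{\FZ^{(1)}(\CM)}$ and $\eta_{\FZ^{(1)}(\CN)}$ unwinds, via the explicit formulae in Example~\ref{exam:fun_CDcE}, into compatibility of $G$ with $\eta_\CM$ and $\eta_\CN$, upgrading $G$ to an equivalence of bimodules over $\CE$, equivalently of $\CC\boxtimes_\CE\CD^{\rev}$-modules (see Definition~\ref{def:bim_E}).

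For fullness \textbf{(iii)}, given $\CW\in\bfus^\cl_\CE$ together with its monoidal $\FZ(\CC)$-$\FZ(\CD)$-bimodule structure $(\psi_\CW,\eta_\CW)$, Theorem~\ref{thm:fully-faithful} yields a semisimple $\CC$-$\CD$-bimodule $\CM$ over $\bk$ and a monoidal bimodule equivalence $\Phi:\Fun_{\CC|\CD}(\CM,\CM)\xrightarrow{\simeq}\CW$. Transporting $\eta_\CW$ through $\Phi$ and using Example~\ref{exam:fun_CDcE} to read the result at the level of bimodule endofunctors of $\CM$ yields a natural isomorphism $\phi_\CC(e)\odot(-)\Rightarrow(-)\odot\phi_\CD(e)$, which is precisely the datum $\eta_\CM$ needed to promote $\CM$ to an object of $\Hom_{\mfus_{/\CE}}(\CC,\CD)$. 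Naturality and the monoidal character of $\eta_\CW$ translate to the coherence axioms for $\eta_\CM$, and by construction $\Phi$ becomes an equivalence of monoidal bimodules containing $\CE$.

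The \textbf{main obstacle} I expect is step (iii): one must check that $\eta_\CM$ obtained by transport from $\eta_\CW$ really satisfies the axioms required for $\CM$ to descend from a $\CC\boxtimes\CD^{\rev}$-module to a $\CC\boxtimes_\CE\CD^{\rev}$-module, and that the resulting $\FZ^{(1)}(\CM)$ recovers $\eta_\CW$ coherently under $\Phi$. Lemma~\ref{lem:fun_cd}, which identifies $\Fun_{\CC\boxtimes_\CE\CD^{\rev}}(\CM,\CN)\simeq\CE\boxdot_\CE\Fun_{\CC|\CD}(\CM,\CN)$, together with Proposition~\ref{prop:mfb-cl}, characterizing closedness by $\bar\CC\boxdot_\CE\CD\simeq\FZ(\CE\boxdot_\CE\CM)$, are the natural technical tools to bridge center-level and bimodule-level $\eta$-data and should make this translation routine.
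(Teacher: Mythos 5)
Your proposal is correct and follows essentially the same route as the paper: reduce to Theorem \ref{thm:fully-faithful} for the underlying (non-enriched) statement, then for fullness transport the isomorphism $\eta_\CW$ through the equivalence $\Fun_{\CC|\CD}(\CM,\CM)\simeq\CW$ to read off, via Example \ref{exam:fun_CDcE} and Remark \ref{rem:bim_E}, precisely the datum $\eta_\CM$ promoting $\CM$ to a bimodule over $\CE$. Your step (ii) is in fact slightly more explicit than the paper, which disposes of faithfulness with a single appeal to the classical theorem; the auxiliary tools you flag (Lemma \ref{lem:fun_cd}, Proposition \ref{prop:mfb-cl}) are not needed in the paper's argument.
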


\begin{proof}
	According to Theorem \ref{thm:fully-faithful} (the case "over $k$"), $\FZ$ here is a well-defined faithful functor. Moreover, given multi-fusion categories $\CC,\CD$ fully faithful over $\CE$ and a closed multi-fusion $\FZ(\CC)$-$\FZ(\CD)$-bimodule $\CN$ containing $\CE$, there exists a nonzero semisimple $\CC$-$\CD$-bimodule $\CM$ such that $\CN\simeq\Fun_{\CC|\CD}(\CM,\CM)$ as multi-fusion $\FZ(\CC)$-$\FZ(\CD)$-bimodules.
	
	Let $\CW=\Fun_{\CC|\CD}(\CM,\CM)$. Choose a right exact $k$-linear monoidal equivalence $F:\CN\to\CW$ and an isomorphism $\theta$ between the braided monoidal functors $\FZ(F) \circ \psi_\CN: \overline{\FZ(\CC)}\boxtimes\FZ(\CD) \to \FZ(\CN) \to \FZ(\CW)$ and $\psi_\CW: \overline{\FZ(\CC)}\boxtimes\FZ(\CD) \to \FZ(\CW)$. We have an induced isomorphism $\eta_{\CW,e}$ between the monoidal functors $\psi_\CW^-:\CE\to\CW$ and $\psi_\CW^+:\CE\to\CW$, by the following commutative diagram:
	$$
	\xymatrix@C=48pt{
		F(\psi_\CN^-(e)) \ar[r]^-{F(\eta_{\CN,e})} \ar[d]_\theta & F(\psi_\CN^+(e)) \ar[d]^\theta \\
		\psi_\CW^-(e) \ar[r]^-{\eta_{\CW,e}} & \psi_\CW^+(e) \\
	}
	$$
	for $e \in \CE$.
	The isomorphism $\eta_{\CW,e}$ promotes $\CN$ to a left $\CC\boxtimes_\CE\CD^\rev$-module. It is clear from the construction that $\CN$ is equivalent to $\CW$ as multi-fusion $\FZ(\CC)$-$\FZ(\CD)$-bimodules containing $\CE$. This shows that $\FZ$ is full, as desired.
\end{proof}

	For any multi-fusion category $\CC$ over $\CE$, we can take a decomposition $\CC \simeq \CC_0 \oplus \CC_1$ of multi-fusion categories, where $\CC_0$ is fully faithful over $\CE$. 
	Combining Theorem \ref{thm:fully-faithful_E} with Proposition \ref{prop:ZC-ZD-bmeE}, we obtain the following corollary.
	
\begin{cor} \label{cor:Me-Ze_E}
	Two multi-fusion categories $\CC$ and $\CD$ over $\CE$ are Morita equivalent over $\CE$ if and only if $\FZ(\CC) \simeq^{br}_\CE \FZ(\CD)$, i.e. $\FZ(\CC)$ is braided monoidally equivalent to $\FZ(\CD)$ respecting $\CE$.
\end{cor}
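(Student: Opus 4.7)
For the forward direction, if $\CC$ and $\CD$ are Morita equivalent over $\CE$ via an invertible $\CC$-$\CD$-bimodule $\CM$ over $\CE$, then Proposition~\ref{prop:ZC-ZD-bmeE} produces directly a braided monoidal equivalence $\FZ(\CC) \simeq \FZ(\CD)$ respecting $\CE$, so nothing new is needed here.

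For the backward direction, first treat the case where both $\CC, \CD \in \mfus_{/\CE}$, i.e., are fully faithful over $\CE$. A braided monoidal equivalence $F: \FZ(\CC) \to \FZ(\CD)$ respecting $\CE$ turns $\FZ(\CD)$ into a closed multi-fusion $\FZ(\CC)$-$\FZ(\CD)$-bimodule containing $\CE$, with the left $\FZ(\CC)$-action transported through $F$ and the natural isomorphism $\eta$ produced by the datum $\delta$ from Definition~\ref{def:bmfE}. This bimodule is invertible in $\bfus^\cl_\CE$, with inverse obtained symmetrically from $F^{-1}$. Hence $\FZ(\CC)$ and $\FZ(\CD)$ are isomorphic as objects of $\bfus^\cl_\CE$. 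By the fully-faithfulness of $\FZ: \mfus_{/\CE} \to \bfus^\cl_\CE$ stated in Theorem~\ref{thm:fully-faithful_E}, this isomorphism lifts to an isomorphism $\CC \simeq \CD$ in $\mfus_{/\CE}$, i.e., to a nonzero semisimple $\CC$-$\CD$-bimodule over $\CE$ whose class is invertible under $\boxtimes_\CE$. This is precisely a Morita equivalence over $\CE$.

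For general multi-fusion categories $\CC, \CD$ over $\CE$, apply the decomposition stated in the paragraph preceding the corollary: $\CC \simeq \CC_0 \oplus \CC_1$ and $\CD \simeq \CD_0 \oplus \CD_1$ with $\CC_0, \CD_0$ fully faithful over $\CE$. The Drinfeld centers split accordingly, and since the given braided equivalence intertwines $\phi_\CC$ and $\phi_\CD$, it must carry the $\CE$-supporting summand $\FZ(\CC_0)$ onto $\FZ(\CD_0)$, restricting to $\FZ(\CC_0) \simeq^{br}_\CE \FZ(\CD_0)$ and to an ordinary braided equivalence $\FZ(\CC_1) \simeq^{br} \FZ(\CD_1)$. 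The fully faithful case handles the first pair, while the ordinary Corollary~\ref{cor:Me-Ze} handles the second; equipping the resulting bimodule between $\CC_1$ and $\CD_1$ with the trivial $\CE$-enrichment then upgrades it to a bimodule over $\CE$, and the two invertible bimodules assemble into an invertible $\CC$-$\CD$-bimodule over $\CE$. The main obstacle in this plan is precisely this last reduction step: one must verify carefully that a braided equivalence respecting $\CE$ does split compatibly with the $(\CC_i)$ and $(\CD_i)$ decompositions of the centers, and that gluing invertible bimodules across summands with differing $\CE$-enrichments yields a genuine invertible bimodule over $\CE$.
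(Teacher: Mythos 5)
Your proof follows the same route as the paper: Proposition \ref{prop:ZC-ZD-bmeE} for the forward implication, the fully-faithfulness of $\FZ:\mfus_{/\CE}\to\bfus^\cl_\CE$ (Theorem \ref{thm:fully-faithful_E}) to reflect a braided monoidal equivalence respecting $\CE$ back to an invertible bimodule over $\CE$, and the decomposition $\CC\simeq\CC_0\oplus\CC_1$ for the general case. The paper's own justification consists of two sentences and is no more explicit about the decomposition and gluing step you flag as the main obstacle, so your write-up is, if anything, more detailed than the original.
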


	Given a multi-fusion category $\CC$ over $\CE$, we denote the group of the equivalence classes of semisimple invertible $\CC$-$\CC$-bimodules over $\CE$ by $\mathrm{BrPic}_{/\CE}(\CC)$ (a subcategory of $\mathrm{BrPic}(\CC)$), and denote the group of the isomorphism classes of braided auto-equivalences of $\FZ(\CC)$ respecting $\CE$ by $\mathrm{Aut}_\CE^{br}(\FZ(\CC))$ (a subcategory of $\mathrm{Aut}^{br}(\FZ(\CC))$). 
	The following result is a consequence of Proposition \ref{prop:ZC-ZD-bmeE}, Example \ref{exam:ZC-ZD-bimcE} and Theorem \ref{thm:fully-faithful_E}.

\begin{cor} \label{cor:BP-Aut_E}
	Let $\CC$ be a multi-fusion category fully faithful over $\CE$.
	We have $\mathrm{BrPic}_{/\CE}(\CC) \simeq \mathrm{Aut}_\CE^{br}(\FZ(\CC))$ as groups.
\end{cor}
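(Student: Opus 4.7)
The plan is to deduce this group isomorphism from the fully-faithfulness of the symmetry enriched center functor (Theorem \ref{thm:fully-faithful_E}) by specializing it to automorphism groups at $\CC \in \mfus_{/\CE}$ and $\FZ(\CC) \in \bfus^\cl_\CE$, and then identifying those categorical automorphism groups with $\mathrm{BrPic}_{/\CE}(\CC)$ and $\mathrm{Aut}^{br}_\CE(\FZ(\CC))$ respectively.

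First I would define the map $\Phi: \mathrm{BrPic}_{/\CE}(\CC) \to \mathrm{Aut}^{br}_\CE(\FZ(\CC))$ by sending the class $[\CM]$ of an invertible semisimple $\CC$-$\CC$-bimodule $\CM$ over $\CE$ to the class of the braided monoidal auto-equivalence $R^{-1}\circ L$ of $\FZ(\CC)$ respecting $\CE$ furnished by Proposition \ref{prop:ZC-ZD-bmeE}. That $\Phi$ is a group homomorphism is immediate from the functoriality and monoidality of $\FZ$: $\boxtimes_\CC$-composition of invertible bimodules in $\mfus_{/\CE}$ is sent to composition of morphisms in $\bfus^\cl_\CE$, and via Example \ref{exam:ZC-ZD-bimcE} this corresponds on the side of $\FZ(\CC)$ to composition of the associated braided auto-equivalences.

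For bijectivity I would invoke the key input from Example \ref{exam:ZC-ZD-bimcE}, namely that $\FZ^{(1)}(\CM) = \Fun_{\CC|\CC}(\CM,\CM)$ is equivalent, as a monoidal $\FZ(\CC)$-$\FZ(\CC)$-bimodule containing $\CE$, to $\FZ(\CC)$ equipped with the bimodule structure coming from the pair $(R^{-1}\circ L,\Id)$. Injectivity then follows because $\Phi([\CM]) = \Phi([\CN])$ forces $\FZ^{(1)}(\CM) \simeq \FZ^{(1)}(\CN)$ as closed monoidal bimodules containing $\CE$, and Theorem \ref{thm:fully-faithful_E} gives $\CM \simeq \CN$ over $\CE$. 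For surjectivity, given $\alpha \in \mathrm{Aut}^{br}_\CE(\FZ(\CC))$, I would equip $\FZ(\CC)$ with the monoidal bimodule structure coming from the pair $(\alpha, \Id)$; the structure functor $\overline{\FZ(\CC)}\boxtimes\FZ(\CC) \to \FZ(\FZ(\CC))$ is an equivalence since $\alpha$ is and $\FZ(\CC)$ is nondegenerate (an object of $\bfus^\cl_\CE$), so this is a closed multi-fusion bimodule containing $\CE$. Fully-faithfulness of $\FZ$ produces a semisimple $\CC$-$\CC$-bimodule $\CM$ over $\CE$ whose image under $\FZ$ is this bimodule; since a fully faithful functor reflects isomorphisms (applied in the category $\bfus^\cl_\CE$), $\CM$ is invertible in $\mfus_{/\CE}$, and by construction $\Phi([\CM]) = [\alpha]$.

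The main obstacle I expect is bookkeeping rather than conceptual: verifying carefully that the passage between braided monoidal auto-equivalences $\alpha$ of $\FZ(\CC)$ respecting $\CE$ and equivalence classes of closed monoidal $\FZ(\CC)$-$\FZ(\CC)$-bimodule structures on $\FZ(\CC)$ containing $\CE$ is a well-defined bijection on equivalence classes --- in particular, that the isomorphism $\delta$ of Definition \ref{def:bmfE} on the $\alpha$-side is correctly exchanged with the datum $\eta$ and the equivalence datum $\theta$ of Definition \ref{def:mbimcE} on the bimodule side. This compatibility is implicit in Example \ref{exam:ZC-ZD-bimcE} but deserves to be pinned down explicitly before Theorem \ref{thm:fully-faithful_E} can be invoked.
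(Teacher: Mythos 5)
Your proposal is correct and follows essentially the same route as the paper, which states this corollary without detailed proof as "a consequence of Proposition \ref{prop:ZC-ZD-bmeE}, Example \ref{exam:ZC-ZD-bimcE} and Theorem \ref{thm:fully-faithful_E}" --- exactly the three ingredients you assemble. Your write-up simply makes explicit the bijection and group-homomorphism checks that the paper leaves implicit.
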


	Etingof, Nikshych and Ostrik proved above two results for the special case $\CE=\bk$ (see Corollary \ref{cor:Me-Ze}, \ref{cor:BP-Aut}) in \cite{ENO2,ENO3}. Kong and Zheng also made it by using Theorem \ref{thm:fully-faithful} in \cite{KZ1}. Here, we generalize their results to the case with symmetry enriched.


\section{Topological orders} \label{sec:TO}

\subsection{Topological orders without symmetry} \label{sec:TOnE}
In this subsection, we give the physical meaning of Theorem \ref{thm:fully-faithful} (the case without symmetry).

In an anomaly-free/anomalous 1+1D topological order, particle-like excitations can be fused, thus such a topological order can in fact be modeled by a fusion category; In an anomaly-free 2+1D topological order, excitations can be not only fused but also braided, thus such a topological order can be modeled by a braided fusion category. (In some texts, the "unitarity" may also be refered to, but here we just drop it and consider the general case.) The vacuum sector corresponds to the tensor unit.

Consider a codimension-1 defect next to an anomaly-free 1+1D topological order. A particle-like excitation in the topological order can be moved to the defect and fused with the excitations in it. This implies such a defect can be described by a semisimple module category over a fusion category. Similarly, for the 2+1D case, the defect should be described by a fusion module over a braided fusion category. Moreover, a wall between two anomaly-free topological orders can be described by a bimodule category. (See the following pictures.)

\begin{center} 
	\begin{tikzpicture}
	\draw [line width=2pt,cap=round] (-5,1)-- (-3,1);
	\draw [line width=2pt,cap=round] (-3,1)-- (-1,1);
	\draw [fill=black] (-3,1) circle (2pt);
	\draw (-4,0.7) node {$\CC$};
	\draw (-3.5,0.7) node {\rotatebox{-90}{$\circlearrowright$}};
	\draw (-3,0.7) node [font=\footnotesize] {$\CM$};
	\draw (-2.5,0.7) node {\rotatebox{90}{$\circlearrowleft$}};
	\draw (-2,0.7) node {$\CD$};

	\draw [line width=2pt,cap=round] (3,0.5)-- (3,2.5);
	\draw (2,1.5) node {$\CC$};
	\draw (2.5,1.5) node {\rotatebox{-90}{$\circlearrowright$}};
	\draw (3,2.8) node [font=\footnotesize] {$\CM$};
	\draw (3.5,1.5) node {\rotatebox{90}{$\circlearrowleft$}};
	\draw (4,1.5) node {$\CD$};
	\end{tikzpicture}
\end{center}

Also, the Deligne's tensor product of two (braided) fusion categories formalizes the stacking of two topological orders. 

In an anomaly-free 2+1D topological order, excitations should be able to detect each other via double braidings (see \textit{Self-detection hypothesis} in \cite{KWZ2} ). In particular, a local excitation must have trivial double braidings with all excitations, thus must be the vacuum or one of its direct sums. This amounts to say that the braided fusion category formed by excitations in an anomaly-free 2+1D topological order must be nondegenerate.

The \textit{Unique-bulk hypothesis} (see \cite{KWZ1,KWZ2}) states that for any given $n$D potentially anomalous topological order $\CC_n$, there is a unique anomaly-free $n+1$D topological order, called the \textit{bulk} of $\CC_n $, such that $\CC_n$ can be realized as its gapped boundary. In fact, the universal property that the "bulk" satisfies makes it nothing but the "center" in mathematics.

Then, look at the following picture of topological orders:

\begin{center} 
	\begin{tikzpicture} 
	\draw [line width=2pt,cap=round] (-3,1)-- (-1,1);
	\draw [line width=2pt,cap=round] (-1,1)-- (-1,3);
	\draw [line width=2pt,cap=round] (-1,1)-- (1,1);
	\draw [line width=2pt,cap=round] (1,1)-- (1,3);
	\draw [line width=2pt,cap=round] (1,1)-- (3,1);
	
	\draw [fill=black] (-1,1) circle (2pt);
	\draw [fill=black] (1,1) circle (2pt);
	
	\draw (-2,2) node {$\FZ(\CB)$};
	\draw (-2,0.7) node {$\CB$};
	
	\draw (0,2) node {$\FZ(\CC)$};
	\draw (0,0.7) node {$\CC$};

	\draw (2,2) node {$\FZ(\CD)$};
	\draw (2,0.7) node {$\CD$};
	
	\draw (-1,0.7) node [font=\footnotesize] {$\CM$};
	\draw (-1,3.3) node [font=\footnotesize] {$\Fun_{\CC|\CD}(\CM,\CM)$};
	
	\draw (1,0.7) node [font=\footnotesize] {$\CN$};
	\draw (1,3.3) node [font=\footnotesize] {$\Fun_{\CC|\CD}(\CN,\CN)$};
	\end{tikzpicture}
\end{center}
It is exactly the physical meaning of Theorem \ref{thm:fully-faithful}.

Similar analyses and mathematical descriptions of topological orders can be found in different texts, such as \cite{KWZ1,KWZ2,KZ3}

\subsection{Symmetry enriched topological orders} \label{sec:SET}
	In this section, we aim to briefly explain the motivation of our work and give the physical meaning of definitions and theorems in Section \ref{sec:SEC} and \ref{sec:SE-center}.

	In fact, our work is inspired by the study on those topological orders with an onsite symmetry, which are usually called symmetry enriched topological (SET) orders. It has been illustrated in \cite{LKW1} that the symmetry can be described by a symmetric fusion category $\CE$ over a field $k$. 
	
	For an anomaly-free/anomalous 1+1D SET order, it is clear the symmetry $\CE$ should have an action on its excitations, thus such a SET order should be modeled by a fusion category over $\CE$.
	
	For an anomaly-free 2+1D SET order, its particle-like topological excitations are desired to be described by a nondegenerate braided fusion category over $\CE$ similarly. However, this data does not fully characterize the SET order. Different from no-symmetry cases, the bulk excitations do not uniquely fix the associated topological orders up to $E_8$ quantum Hall states (see \cite{CGLW,Ki} for examples). One way to complete the data is to categorically gauge the symmetry (see \cite{LKW1,KLWZZ}). As a consequence, an anomaly-free 2+1D SET order can be fully characterized by a triple $(\CE,\CC,\CM)$, where $\CC$ is a unitary modular tensor category over $\CE$ and $\CM$ is a minimal modular extension of $\CC$. Note that $\CM$ is a fusion category containing $\CE$, and $\CC$ is the centralizer of $\CE$ in $\CM$.
	
	Consider a codimension-1 wall between two anomaly-free SET orders. It is clear that the excitations in the bulk still form a bimodule category. However, in the system, the symmetry $\CE$ should be transparent. Therefore, if we want to characterize a wall between two anomaly-free SET orders, we need to use a bimodule category equipped with an additional relation (see the following pictures). Thus we introduce the notion of a bimodule over $\CE$ (see Definition \ref{def:bim_E} and Remark \ref{rem:bim_E}) and that of a monoidal bimodule containing $\CE$ (see Definition \ref{def:mbimcE}).
	
\begin{center}
\begin{tikzpicture}
	\draw [line width=2pt,cap=round] (-5,1)-- (-3,1);
	\draw [line width=2pt,cap=round] (-3,1)-- (-1,1);
	\draw [fill=black] (-3,1) circle (2pt);
	
	\draw (-4,0.7) node {$\CC$};
	\draw (-3,0.7) node [font=\footnotesize] {$\CM$};
	\draw (-2,0.7) node {$\CD$};
	
	\draw (-3.1,1.9) node {$e$};
	\draw (-3.1,2.2) node {\rotatebox{90}{$\in$}};
	\draw (-3.1,2.6) node {$\CE$};
	
	\draw [fill=black] (-3,1.7) circle (1.5pt);
	\draw [fill=black] (-3.8,1.05) circle (1.5pt);
	\draw [fill=black] (-2.2,1.05) circle (1.5pt);
	
	\draw [->,line width=1pt,dash pattern=on 5pt off 5pt] (-3,1.7) -- (-3.8,1.05);
	\draw [->,line width=1pt,dash pattern=on 5pt off 5pt] (-3.8,1.05) -- (-3,1.05);
	\draw [->,line width=1pt,dash pattern=on 5pt off 5pt] (-3,1.7) -- (-2.2,1.05);
	\draw [->,line width=1pt,dash pattern=on 5pt off 5pt] (-2.2,1.05) -- (-3,1.05);

	\draw [line width=2pt,cap=round] (3,0.5)-- (3,2.5);
	\draw (2,1.5) node {$\CC$};
	\draw (3,2.8) node [font=\footnotesize] {$\CM$};
	\draw (4,1.5) node {$\CD$};
	
	\draw (2.3,2.1) node {$e$};
	\draw (2.1,2.3) node {\rotatebox{135}{$\in$}};
	\draw (1.9,2.5) node {$\CE$};
	
	\draw [fill=black] (2.5,2) circle (1.5pt);
	\draw [fill=black] (3,2) circle (1.5pt);
	\draw [fill=black] (3.5,2) circle (1.5pt);
	
	\draw [->,line width=1pt,dash pattern=on 5pt off 5pt] (2.5,2) -- (3,2);
	\draw [->,line width=1pt,dash pattern=on 5pt off 5pt] (3,2) -- (3.5,2);
\end{tikzpicture}
\end{center}	
	
	Also due to the transparentness of $\CE$, we use the Deligne's tensor product over $\CE$ to characterize the stacking of anomaly-free 1+1D SET orders. 
	
	When considering the stacking of anomaly-free 2+1D SET orders, things may be complex. After gauging the symmetry, there are particle-like excitations which have nontrivial double braidings with the excitations in $\CE$. By the intuition from physics, we form a new hypothesis: 
	
	\textit{Two excitations from different SET orders after gauging the symmetry can be stacked together to get a new one if and only if they induce the same double braidings with all excitations in $\CE$. }
	
	Thus we introduce a new tensor product---the relative tensor product over $\CE$ (see Definition \ref{def:rtp}) by involving $\CE$-module braidings (see Definition \ref{def:br}).
	
	In fact, when we use a fusion bimodule containing $\CE$ to describe a domain wall between anomaly-free 2+1D SET orders after gauging the symmetry, we will find there are also $\CE$-module braidings on it (see the following picture). 	
\begin{center}
	\begin{tikzpicture}
	\draw [line width=2pt,cap=round] (3,0.5)-- (3,2.5);
	\draw (2,1.5) node {$\CC$};
	\draw (3,2.8) node [font=\footnotesize] {$\CM$};
	\draw (4,1.5) node {$\CD$};
	
	\draw (2.3,2.1) node {$e$};
	\draw (2.1,2.3) node {\rotatebox{135}{$\in$}};
	\draw (1.9,2.5) node {$\CE$};
	
	\draw [fill=black] (2.5,2) circle (1.5pt);
	\draw [fill=black] (3.5,2) circle (1.5pt);
	\draw [fill=black] (3.5,1) circle (1.5pt);
	\draw [fill=black] (2.5,1) circle (1.5pt);
	
	\draw (2.85,1.35) node {$x$};
	\draw [fill=black] (3,1.5) circle (1.5pt);
	
	\draw [->,line width=1pt,dash pattern=on 5pt off 5pt] (2.5,2) -- (3.5,2);
	\draw [->,line width=1pt,dash pattern=on 5pt off 5pt] (3.5,2) -- (3.5,1);
	\draw [->,line width=1pt,dash pattern=on 5pt off 5pt] (3.5,1) -- (2.5,1);
	\draw [->,line width=1pt,dash pattern=on 5pt off 5pt] (2.5,1) -- (2.5,2);
	\end{tikzpicture}
\end{center}	
	And to describe the stacking of two multi-fusion bimodules containing $\CE$, we also need to use the relative tensor product over $\CE$. 
	
	Then, the following picture of SET orders exactly gives the physical meaning of Theorem \ref{thm:fully-faithful_E}.
	
\begin{center} 
\begin{tikzpicture} 
	\draw [line width=2pt,cap=round] (-3,1)-- (-1,1);
	\draw [line width=2pt,cap=round] (-1,1)-- (-1,3);
	\draw [line width=2pt,cap=round] (-1,1)-- (1,1);
	\draw [line width=2pt,cap=round] (1,1)-- (1,3);
	\draw [line width=2pt,cap=round] (1,1)-- (3,1);
	
	\draw [fill=black] (-1,1) circle (2pt);
	\draw [fill=black] (1,1) circle (2pt);
	
	\draw (-2.5,2.5) node {$\CE \hookrightarrow$};	
	\draw (-2,2) node {$\FZ(\CB)$};
	\draw (-2,0.7) node {$\CB$};
	
	\draw (-0.5,2.5) node {$\CE \hookrightarrow$};
	\draw (0,2) node {$\FZ(\CC)$};
	\draw (0,0.7) node {$\CC$};
	
	\draw (1.5,2.5) node {$\CE \hookrightarrow$};
	\draw (2,2) node {$\FZ(\CD)$};
	\draw (2,0.7) node {$\CD$};
	
	\draw (-1,0.7) node [font=\footnotesize] {$\CM$};
	\draw (-1,3.3) node [font=\footnotesize] {$\Fun_{\CC|\CD}(\CM,\CM)$};
	
	\draw (1,0.7) node [font=\footnotesize] {$\CN$};
	\draw (1,3.3) node [font=\footnotesize] {$\Fun_{\CC|\CD}(\CN,\CN)$};
\end{tikzpicture}
\end{center}


\end{document}